\documentclass[11pt,a4paper,reqno]{amsart}
\usepackage[english]{babel}
\usepackage[T1]{fontenc}
\usepackage{verbatim}
\usepackage{palatino}
\usepackage{amsmath}
\usepackage{mathabx}
\usepackage{amssymb}
\usepackage{amsthm}
\usepackage{amsfonts}
\usepackage{graphicx}
\usepackage{esint}
\usepackage{color}
\usepackage{mathtools}

\usepackage[colorlinks = true, citecolor = black]{hyperref}
\pagestyle{headings}
\title[Metric rectifiability of $\He$-regular surfaces]{Metric rectifiability of $\He$-regular surfaces with H\"older continuous horizontal normal}
\author{Daniela Di Donato}
\author{Katrin F\"assler}
\author{Tuomas Orponen}
\address{Department of Mathematics and Statistics\\
University of Jyv\"{a}skyl\"{a}, P.O. Box 35 (MaD)\\FI-40014 University of Jyv\"{a}skyl\"{a}\\
Finland. \underline{Current address:} SISSA\\ Trieste\\ Italy}
\address{Department of Mathematics and Statistics\\
University of Jyv\"{a}skyl\"{a}, P.O. Box 35 (MaD)\\FI-40014 University of Jyv\"{a}skyl\"{a}\\
Finland}
\address{Department of Mathematics and Statistics\\ University of Helsinki,
P.O. Box 68 (Pietari Kalmin katu 5)\\
FI-00014 University of Helsinki\\
Finland. \underline{Current address:} Department of Mathematics and Statistics\\
University of Jyv\"{a}skyl\"{a}\\
Finland} \email{ddidonat@sissa.it} \email{katrin.s.fassler@jyu.fi}
\email{tuomas.t.orponen@jyu.fi}
\date{\today}
\subjclass[2010]{30L05 (primary) 28A78 (secondary)}
\keywords{Uniform rectifiability, big pieces of Lipschitz images, intrinsic graphs}
\thanks{D.D.D. is partially supported by the Academy of Finland (grant
288501
`\emph{Geometry of subRiemannian groups}' and by grant
322898
`\emph{Sub-Riemannian Geometry via Metric-geometry and Lie-group Theory}')
and by the European Research Council
 (ERC Starting Grant 713998 GeoMeG `\emph{Geometry of Metric Groups}').  K.F. is supported by the Academy of Finland trough the grant 321696
`Singular integrals, harmonic functions, and boundary regularity
in Heisenberg groups'. T.O. is supported by the Academy of Finland
via the project \emph{Quantitative rectifiability in Euclidean and
non-Euclidean spaces}, grant Nos. 309365, 314172.}

\newcommand{\R}{\mathbb{R}}
\newcommand{\W}{\mathbb{W}}
\newcommand{\He}{\mathbb{H}}
\newcommand{\N}{\mathbb{N}}

\newcommand{\Z}{\mathbb{Z}}

\newcommand{\calD}{\mathcal{D}}
\newcommand{\calH}{\mathcal{H}}

\newcommand{\calQ}{\mathcal{Q}}

\newcommand{\spt}{\operatorname{spt}}

\newcommand{\diam}{\operatorname{diam}}
\newcommand{\card}{\operatorname{card}}
\newcommand{\dist}{\operatorname{dist}}

\newcommand{\sgn}{\operatorname{sgn}}

\newcommand{\V}{\mathbb{V}}
\newcommand{\Tan}{\mathrm{Tan}}

\def\Barint_#1{\mathchoice
          {\mathop{\vrule width 6pt height 3 pt depth -2.5pt
                  \kern -8pt \intop}\nolimits_{#1}}%
          {\mathop{\vrule width 5pt height 3 pt depth -2.6pt
                  \kern -6pt \intop}\nolimits_{#1}}%
          {\mathop{\vrule width 5pt height 3 pt depth -2.6pt
                  \kern -6pt \intop}\nolimits_{#1}}%
          {\mathop{\vrule width 5pt height 3 pt depth -2.6pt
                  \kern -6pt \intop}\nolimits_{#1}}}

\numberwithin{equation}{section}

\theoremstyle{plain}
\newtheorem{thm}[equation]{Theorem}

\newtheorem{lemma}[equation]{Lemma}

\newtheorem{ex}[equation]{Example}

\newtheorem{proposition}[equation]{Proposition}

\newtheorem*{questions}{Questions}

\theoremstyle{definition}

\newtheorem{definition}[equation]{Definition}

\theoremstyle{remark}
\newtheorem{remark}[equation]{Remark}

\addtolength{\hoffset}{-1.15cm}
\addtolength{\textwidth}{2.3cm}
\addtolength{\voffset}{0.45cm}
\addtolength{\textheight}{-0.9cm}

\newcommand{\nref}[1]{(\hyperref[#1]{#1})}

\begin{document}

\begin{abstract}
Two definitions for the rectfiability of hypersurfaces in
Heisenberg groups $\mathbb{H}^n$ have been proposed: one based on
$\He$-regular surfaces, and the other on Lipschitz images of
subsets of codimension-$1$ vertical subgroups. The equivalence between these notions
remains an open problem. Recent partial results are due to Cole-Pauls, Bigolin-Vittone, and Antonelli-Le Donne.

This paper makes progress in one direction: the metric
Lipschitz rectifiability of $\He$-regular surfaces. We prove that $\He$-regular surfaces in $\mathbb{H}^{n}$ with $\alpha$-H\"older
continuous horizontal normal, $\alpha > 0$, are metric bilipschitz rectifiable.
This improves on the work by Antonelli-Le Donne,
where the same conclusion was obtained for $C^{\infty}$-surfaces.

In $\mathbb{H}^{1}$, we prove a slightly stronger result: every
codimension-$1$ intrinsic Lipschitz graph with an $\epsilon$ of extra regularity
in the vertical direction is metric bilipschitz rectifiable. All the proofs in the paper are based on a new general criterion for finding
bilipschitz maps between "big pieces" of metric spaces.
\end{abstract}

\maketitle

\tableofcontents

\section{Introduction}\label{s:introd}

This paper concerns the relationship between two
notions of \emph{codimension-$1$ rectifiability} in the Heisenberg
group $(\He^n,d_{\He}) = (\R^{2n+1},\cdot,d_{\He})$, where
"$\cdot$" is the group product
\begin{displaymath}
(x_1,\ldots,x_{2n},t) \cdot (x_1',\ldots,x_{2n}',t') =
\left(\sum_{i=1}^{2n} x_i + x_i',t+t'+\tfrac{1}{2}\sum_{i=1}^n x_i
x_{n+i}'-x_{n+i}x_i'\right)\in \mathbb{R}^{2n}\times\mathbb{R},
\end{displaymath} and $d_{\He}$ is the Kor\'anyi distance
$d_{\He}(p,q) := \|q^{-1} \cdot p\|$ (with $\|(x,t)\| :=
\sqrt[4]{|x|^4 + 16t^{2}}$ for $(x,t)\in \mathbb{R}^{2n}\times
\mathbb{R}$). Metric notions in $\He^{n}$, notably Hausdorff measures, are defined using the metric $d_{\He}$. Metric notions in $\R^{n}$ are defined using the standard
Euclidean distance.

In $\R^{n}$, the notion of rectifiability can be defined in two
equivalent ways. For $0 < m < n$, an $\calH^{m}$ measurable set $E
\subset \R^{n}$ is called \emph{$m$-rectifiable} if $\calH^{m}$
almost all of $E$ can be covered by either \begin{enumerate}
\item countably many Lipschitz $m$-images, or
\item countably many Lipschitz $m$-graphs.
\end{enumerate}
Here, a \emph{Lipschitz $m$-image} means a Lipschitz image of
$\R^{m}$, while a \emph{Lipschitz $m$-graph} means a set of the
form $\{v + A(v) : v \in V\}$, where $V \subset \R^{n}$ is an
$m$-dimensional subspace, and $A \colon V \to V^{\perp}$ is a
Lipschitz map. The equivalence of "Lipschitz image rectifiability" and "Lipschitz graph rectifiability" is well-known. In particular, Lipschitz $m$-graphs are trivially Lipschitz $m$-images, since $v \mapsto v + A(v)$ is Lipschitz whenever $A$ is.

We then discuss the analogues of these notions in $\He^{n}$.
Recall first that $(\mathbb{H}^n,d_{\He})$ is a metric space of
Hausdorff dimension $2n+2$. A common notion of codimension-$1$
rectifiability (see \cite[Definition 4.33]{MR2836591}) is
\emph{intrinsic Lipschitz graph (iLG) rectifiability}:
an $\calH^{2n + 1}$ measurable set $E \subset \He^{n}$ is called iLG rectifiable if $\calH^{2n+1}$ almost all of $E$ can be covered by countably many
iLGs over \emph{vertical subgroups of codimension $1$}. Here, vertical subgroups refer to codimension-$1$ subspaces of $\mathbb{R}^{2n+1}$
containing the $t$-axis, cf.\ Section \ref{s:C1alpha}, while iLGs were introduced by Franchi, Serapioni, and Serra
Cassano \cite{FSS} in 2006. They are natural $\He^{n}$
counterparts of Lipschitz graphs in $\R^{n}$, see Definition
\ref{d:intrinsicGraphs}.

The notion of \emph{Lipschitz image (LI)
rectifiability} in $\mathbb{H}^n$ was first studied by
Pauls \cite{MR2048183} in 2004. An
$\calH^{2n+1}$ measurable set $E \subset \He^n$ is called
LI rectifiable if $\calH^{2n+1}$ almost all of $E$ can be
covered by countably many Lipschitz images of closed subsets of codimension-$1$
vertical subgroups. All of these subgroups (for $n \geq 1$ fixed) are isometrically isomorphic to each other. If $n=1$, they are further isometrically isomorphic
to the \emph{parabolic plane}
\begin{displaymath} \Pi := (\R^{2},+,\| \cdot\| ), \quad \text{where} \quad \|(y,t)\| := \sqrt[4]{y^{4} + 16t^{2}}, \end{displaymath}
and if $n\geq
2$, they are
isometrically isomorphic to $(\mathbb{H}^{n-1}\times
\mathbb{R},\cdot_{\mathbb{H}^{n-1}\times \mathbb{R}},\|\cdot\|)$
with
\begin{equation}\label{eq:HeisxR}
((z,t),s)\cdot_{\mathbb{H}^{n-1}\times \mathbb{R}}
((z',t'),s')=\big(\big(z+z',t+t'+\tfrac{1}{2}\sum_{i=1}^{n-1}
(z_iz_{n-1+i}'-z_i'z_{n-1+i}) \big),s+s'\big)
\end{equation}
and
\begin{displaymath}
\|((z,t),s)\| = \sqrt[4]{|(z,s)|^4 + 16 t^2}.
\end{displaymath}
 So, $E$ is LI rectifiable if and only if $\calH^{2n+1}$ almost
all of $E$ can be covered by countably many Lipschitz images of
closed subsets of $\Pi$ (if $n=1$) or
$\mathbb{H}^{n-1}\times\mathbb{R}$ (if $n\geq 2$). The metric
induced by $\|\cdot\|$ in $\Pi$ is denoted $d_{\Pi}$, and in
$\mathbb{H}^{n-1}\times \mathbb{R}$ by $d_{\mathbb{H}^{n-1}\times
\mathbb{R}}$.

The connection between iLG and LI rectifiability in $\He^n$ is
poorly understood. It is neither known if (a) LIs of vertical subgroups are iLG
rectifiable, nor if (b) iLGs are LI rectifiable. It may appear surprising that question (b) is open: after all, to show that Lipschitz $m$-graphs in $\R^{n}$
are Lipschitz $m$-images, one only needed to observe that the graph map $v \mapsto v + A(v)$
is Lipschitz whenever $A$ is. In $\He^{n}$, this argument fails completely. We will discuss the matter further in a moment.

The purpose of this paper is to make progress in question (b). In brief, we will show intrinsic $C^{1,\alpha}$-graphs in $\He^{n}$ are LI rectifiable for any $\alpha > 0$. In $\He^{1}$, we can say something a little better. For precise statements, see Theorems \ref{mainIntro}, \ref{mainQualitative}, and \ref{mainIntroVertical}. Before formulating these new results in detail, we define our objects of study more carefully, and describe some previous work on the topic.
\begin{definition}\label{d:intrinsicGraphs} An \emph{intrinsic graph over the vertical subgroup $\W=\{x_1=0\}$} in $\He^n$  is a set of the form
\begin{equation}\label{d:ILG} S = \{w \cdot \varphi(w) : w \in \W\}, \end{equation}
where $\varphi \colon \W \to \V =: \{(x_1,0,\ldots,0) : x_1 \in
\R\}$ is an arbitrary function. The graph $S$ determines $\varphi$
uniquely. Further, $S$ is an intrinsic $L$-\textbf{Lipschitz}
graph ($L$-iLG) over $\W$ if it satisfies a \emph{cone condition}
of the form
\begin{displaymath} S \cap (p \cdot \mathcal{C}(\alpha)) = \{p\}, \qquad p \in S, \; 0 < \alpha < L^{-1}. \end{displaymath}
Here $\mathcal{C}(\alpha) := \{q \in \He^n : \|\pi_{\W}(p)\| \leq
\alpha \|\pi_{\V}(p)\|\}$, and $\pi_{\W} \colon \He^n \to \W$ and
$\pi_{\V} \colon \He^n \to \V$ are the \emph{vertical and
horizontal} projections induced by the splitting $\He^n = \W \cdot
\V$. If $S \subset \He^n$ is an ($L$-)iLG, the function $\varphi$
is called an ($L$-)\emph{intrinsic Lipschitz function}.
\end{definition} Now, viewing \eqref{d:ILG}, it is clear that an
iLG $S \subset \He^n$ has a "canonical" parametrisation by the
\emph{graph map} $\Phi \colon \W \to S$, defined by $\Phi(w) := w
\cdot \varphi(w)$. However:
\begin{itemize}
\item $\varphi \colon (\W,d_{\He}) \to (\V,d_{\He})$ is not always
a Lipschitz function, and \item the graph map $\Phi \colon
(\W,d_{\He})\to (S,d_{\He})$ is "almost never" Lipschitz.
\end{itemize}
Regarding the first point, \cite[Example 3.3]{FSS} suggests that
$\varphi(0,x_2,t) = (1 + t^{1/2},0,0)$ is an intrinsic Lipschitz
function in $\mathbb{H}^1$, which is not a Lipschitz function. For
the second point, consider the constant function $\varphi(0,x_2,t)
\equiv (1,0,0)$. Then the graph map
\begin{displaymath} \Phi(0,x_2,t) = (0,x_2,t) \cdot (1,0,0) = (1,x_2,t - \tfrac{x_2}{2}) \end{displaymath}
parametrises the vertical plane $S = \W' =
\{x_1=1\}\subset\mathbb{H}^1$, a prototypical iLG. However, $\Phi$
is not Lipschitz on any open subset of $\W$, because the only
rectifiable curves on $\W,\W'$ are the \emph{horizontal lines}
contained on $\W,\W'$, and $\Phi$ sends the horizontal lines on
$\W$ to non-horizontal lines on $\W'$ (by \textbf{right}
translations).

In spite of these difficulties, the graph map is sometimes useful
for Lipschitz parametrising iLGs: if an iLG $S \subset \He^{1}$
has enough \emph{a priori} regularity, then
$\Phi$ can precomposed with something known as the
\emph{characteristic straightening map} $\Psi \colon \Pi \to \W$
in such a way that $\Phi \circ \Psi \colon \Pi \to S$ is locally
Lipschitz -- or even bilipschitz. The following theorem is due to
Cole and Pauls \cite{MR2247905} from 2006 (the addition of the
letters "bi" is due to Bigolin and Vittone \cite{MR2603594} from
2010):
\begin{thm}[Cole-Pauls, Bigolin-Vittone]\label{CPBV} Every non-characteristic point on a Euclidean $C^{1}$ surface $S \subset \He^1$
has a neighbourhood which is the bilipschitz image of an open
subset of $\Pi$. \end{thm}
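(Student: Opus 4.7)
My plan is to follow the classical strategy: first reduce $S$ locally to an intrinsic graph over a vertical subgroup, and then compose the graph map with a \emph{characteristic straightening} to obtain an explicit bilipschitz parametrisation by a planar domain. Let $p \in S$ be non-characteristic. After a left translation I may assume $p = 0$ and, after a horizontal rotation, that the horizontal normal of $S$ at $0$ points in the $x_1$-direction. Presenting $S$ as the zero set of a Euclidean $C^1$ function with non-vanishing $X_1$-derivative at $0$ and applying the Euclidean implicit function theorem gives $S = \{(f(x_2, t), x_2, t)\}$ near $0$, with $f \in C^1$ and $f(0,0) = 0$. The $C^1$ change of coordinates $\tau := t + \tfrac{1}{2} x_2 f(x_2, t)$ rewrites this as an intrinsic graph over $\W = \{x_1 = 0\}$, that is, $S = \{w \cdot (\phi(w), 0, 0) : w \in U\}$ for some open $U \subset \W$ with $\phi \in C^1(U)$ and $\phi(0) = 0$.

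The Euclidean $C^1$ regularity of $\phi$ produces a $C^1$ flow of the characteristic vector field $B := \partial_{x_2} + \phi\, \partial_t$ on $\W$. Writing $(y, \tau)$ for the coordinates on $\Pi$, I define on a small neighbourhood $V \subset \Pi$ of the origin
\[
\Psi(y, \tau) := (0, y, T(y, \tau)), \qquad \partial_y T(y, \tau) = \phi(y, T(y, \tau)),\quad T(0, \tau) = \tau.
\]
Then $\Psi$ is a $C^1$ diffeomorphism onto a neighbourhood of $0$ in $\W$, and the candidate parametrisation is
\[
F := \Phi \circ \Psi \colon V \to S, \qquad F(y, \tau) = \bigl( \phi_{*},\, y,\, T - \tfrac{1}{2} y \phi_{*} \bigr), \qquad \phi_{*} := \phi(y, T(y, \tau)).
\]

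A direct computation of the Kor\'anyi norm of $F_1^{-1} \cdot F_2$ yields
\[
d_\He(F_1, F_2)^4 = \bigl((\phi_{*,1} - \phi_{*,2})^2 + (y_1 - y_2)^2\bigr)^2 + 16 E^2,
\]
with $E := T_2 - T_1 - \tfrac{1}{2}(y_2 - y_1)(\phi_{*,1} + \phi_{*,2})$. The Lipschitz bound $d_\He(F_1, F_2) \lesssim d_\Pi((y_1, \tau_1), (y_2, \tau_2))$ follows routinely once one notes that $\phi$ and $T$ are Euclidean $C^1$ with small derivatives near $0$, and that a trapezoidal-rule expansion of the integral form of $T_2 - T_1$ controls $|E|$ by $d_\Pi^2$. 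For the reverse direction, I split
\[
T_2 - T_1 = \bigl[T(y_2, \tau_2) - T(y_2, \tau_1)\bigr] + \int_{y_1}^{y_2} \phi(y, T(y, \tau_1))\,dy,
\]
estimate the first bracket as $(1+o(1))(\tau_2 - \tau_1)$ via the linearised flow (whose coefficient $\partial_\tau T$ is continuous and equals $1$ at the origin), and apply the trapezoidal rule to the integral---legitimate because $y \mapsto \phi(y, T(y, \tau_1))$ is $C^1$ with derivative $B\phi$. After shrinking $V$ so that $\|\phi\|_{C^1}$ is sufficiently small, this yields $E = (1 + o(1))(\tau_2 - \tau_1) + O(|y_1 - y_2|^2)$, and an AM--GM step together with the trivial $|y_1 - y_2|^4 \leq d_\He(F_1, F_2)^4$ closes the lower parabolic bound.

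The principal obstacle is the lower bilipschitz bound: the quadratic trapezoidal error and the cross term of order $|y_2 - y_1| \cdot |\tau_2 - \tau_1|$ must both be dominated by the main $\tau_2 - \tau_1$ contribution to $E$. This requires continuity of $B\phi$, i.e.\ the full Euclidean $C^1$ regularity of $\phi$; with weaker regularity (such as merely intrinsic Lipschitz), the trapezoidal error becomes uncontrolled, which is precisely why the main results of the present paper---which target weaker regularity hypotheses---call for a genuinely different strategy.
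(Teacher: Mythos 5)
Your argument is correct in outline, and all the key steps check out: the reduction of $S$ near a non-characteristic point to a Euclidean $C^{1}$ intrinsic graph, the $C^{1}$ flow box for $B = \partial_{x_2} + \phi\,\partial_t$, the explicit Kor\'anyi-norm identity for $F_2^{-1}\cdot F_1$, and the trapezoidal-rule control of the vertical error $E$ (including the absorption of the cross term $|y_1-y_2|\,|\tau_2-\tau_1|$ after shrinking the domain). Be aware, however, that the paper does not prove Theorem \ref{CPBV} at all -- it is quoted from Cole--Pauls and Bigolin--Vittone, and what you have written is essentially a reconstruction of their characteristic-straightening argument. The only place the paper carries out this computation itself is Lemma \ref{flagimpliesbilip}, which is exactly your estimate in the degenerate case where $\phi$ is independent of $t$ (so that $T(y,\tau) = \tau + \int_0^y\psi$ and the trapezoidal error vanishes identically); for general graphs the paper deliberately avoids the straightening map and instead runs the abstract gluing scheme of Theorem \ref{main}, approximating the surface by Lipschitz flags. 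The paper's own alternative treatment of Euclidean $C^{1}$ surfaces (the remark following Theorem \ref{mainGraphsn_1}, via extra vertical H\"older regularity and Theorem \ref{mainVertical}) yields only big pieces of bilipschitz images, i.e.\ LI rectifiability, which is strictly weaker than the full ``a whole neighbourhood is bilipschitz to an open subset of $\Pi$'' conclusion your argument delivers. Your closing diagnosis -- that the lower bilipschitz bound hinges on continuity of $B\phi$ and collapses below $C^{1}$ -- is precisely the point the paper makes with the Bigolin--Vittone example \eqref{BVEx}, and is the correct explanation of why the paper's main theorems require a different strategy.
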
 Both proofs reduce the problem to
intrinsic Lipschitz graphs $S = \Phi(\W)\subset \He^1$, and the
Euclidean $C^{1}$-smoothness of $S$ then translates to properties
of the intrinsic Lipschitz function $\varphi \colon \W \to \V$.
The essential hypothesis is that $\varphi$ is a Euclidean
$C^{1}$-function, although it might suffice that $\varphi$ is Euclidean Lipschitz, viewed as a function $\R^{2} \to \R$.
 The characteristic straightening map only plays a small (and rather implicit) role in this paper, see Lemma \ref{flagimpliesbilip}. So, we refer to the "Outline of proofs" section in \cite{MR3400438}, or the proof of \cite[Theorem 3.1]{MR2603594} for more details.
 In brief, the regularity of $\varphi$ is, in Theorem \ref{CPBV}, required to control the regularity of $\Psi$,
 and if $\varphi$ fails to be Lipschitz $\R^{2} \to \R$, the map $\Psi$ does not appear to be useable for the Lipschitz parametrisation problem.

In fact, Bigolin and Vittone in \cite{MR2603594} show that Theorem \ref{CPBV} can fail without the $C^{1}$-regularity assumption.
For $\tfrac{1}{2} < \alpha < 1$, they consider the intrinsic Lipschitz function
\begin{equation}\label{BVEx} \varphi(0,x_2,t) = \begin{cases} -\tfrac{t^{\alpha}}{1 - \alpha}, & \text{if } t \geq 0, \\
 0, & \text{if } t < 0, \end{cases} \qquad (0,x_2,t) \in \W, \end{equation}
and its intrinsic graph $S = \Phi(\W)$, which fails to be
Euclidean $C^{1}$-regular in any neighbourhood of the line
$\{(0,x_2,0) : x_2 \in \R\}$. They show that no Lipschitz map from
an open subset of $\Pi$ to a neighbourhood of $0 \in \Gamma$ can
have a Lipschitz inverse.

The example \eqref{BVEx} is a good prelude to the results of this
paper. The main novelties will be to
\begin{itemize}
\item[(a)] say something about the LI
rectifiability of iLGs below the critical $C^{1}$-regularity of
$\varphi$ (in particular, our results apply to the example in
\eqref{BVEx} for $\tfrac{1}{2} < \alpha < 1$),
\item[(b)] consider the problem in higher Heisenberg groups, where the technique via the
characteristic straightening map does not seem to be easily available.
\end{itemize}
Here is the first main result:
\begin{thm}\label{mainIntro} Let $\alpha > 0$, and let $S \subset \He^n$ be the intrinsic graph of a globally defined but compactly supported
intrinsic $C^{1,\alpha}$-function. Then $S$ has big pieces of
bilipschitz images of the parabolic plane $(\Pi,d_{\Pi})$ if
$n=1$, or of $(\mathbb{H}^{n-1}\times
\mathbb{R},d_{\mathbb{H}^{n-1}\times \mathbb{R}})$ if $n\geq 2$.
In particular, $S$ is LI rectifiable.
\end{thm}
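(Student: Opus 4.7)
The plan is to apply the paper's general criterion for big pieces of bilipschitz images, announced in the abstract. Modulo that criterion, the task reduces to producing, at every $p \in S$ and every scale $0 < r \leq \diam(\spt \varphi)$, a quantitative local approximation of $S \cap B(p,r)$ by a left-translated tangent vertical subgroup, with scale-invariant constants.

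The main analytic input I would establish is a Heisenberg counterpart of the classical fact that $C^{1,\alpha}$ hypersurfaces are $r^{1+\alpha}$-flat on balls of radius $r$. Concretely, using the $\alpha$-H\"older continuity of the horizontal normal $\nu_S$ (equivalently of the intrinsic gradient $\nabla^{\varphi}\varphi$), I would show that $S \cap B(p,r)$ lies within a Kor\'anyi slab of width $\lesssim r^{1+\alpha}$ around $p \cdot \V_p$, where $\V_p$ denotes the vertical tangent subgroup at $p$. The summability-in-scales afforded by the extra $r^{\alpha}$ factor is what distinguishes the $C^{1,\alpha}$ case from a merely $C^{1}$ hypothesis, and it is what powers the Carleson-type packing estimates that one expects to lie at the heart of the BPBI criterion. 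Using a left translation moving $p$ to $0$ and a Heisenberg rotation aligning $\V_p$ with the standard vertical subgroup $\W_0$, one reduces to studying the graph of a function with small intrinsic gradient near the origin, noting that $\W_0$ is isometric to $\Pi$ when $n=1$ and to $\He^{n-1}\times\R$ when $n \geq 2$.

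For $n=1$, the candidate bilipschitz parametrization is $\Phi \circ \Psi \colon \Pi \to S$, where $\Psi$ is the characteristic straightening map, and the quantitative flatness should promote Theorem \ref{CPBV} to a uniform statement on a big piece of definite relative measure. For $n \geq 2$, where no clean straightening construction is readily available, the flatness has to feed directly into the general criterion without an intermediate explicit parametrization; I anticipate this as the principal technical obstacle. Once the criterion delivers one bilipschitz piece of definite relative measure in every ball centered on $S$, LI rectifiability follows by a standard Vitali-type exhaustion covering $\calH^{2n+1}$-almost all of $S$ by countably many bilipschitz images of closed subsets of the model space.
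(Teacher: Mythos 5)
Your high-level strategy---feed a quantitative $r^{1+\alpha}$ tangent-plane approximation into the general big-pieces criterion---is indeed the paper's route for $n\geq 2$, and the flatness estimate you describe is Proposition \ref{approxProp} (resting on Proposition \ref{p:Approx}, whose proof for $n\geq 2$ is itself nontrivial: it requires connecting points of $\W$ by concatenations of integral curves of the vector fields $D_j^{\varphi}$ and integrating the H\"older bound \eqref{CFOdef} along them). But there is a genuine gap: one-scale flatness is only half of what Theorem \ref{main} demands. Its second hypothesis \eqref{comp} is a \emph{coherence} condition: the approximating maps $i^{n}_{x\to p}$ and $i^{n+1}_{y\to q}$ at nearby base points and consecutive scales must agree up to $O(2^{-n(1+\alpha)})$. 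Verifying this forces you to choose, for each $p\in S$, a parametrization of the tangent plane $\W_p$ by the model space that varies in a controlled way as $p$ moves along $S$; the paper does this with explicit shear-type group isomorphisms $\Psi_D$ (Lemma \ref{l:Psi_A}), whose dependence on $D=\nabla^{\varphi}\varphi(w)$ is quantified, combined with the H\"older continuity of the intrinsic gradient \eqref{form39} and the commutator estimate \eqref{eq:FundCommRel}; this is where the exponent degrades from $\alpha$ to $\alpha/2$. Your proposal never addresses this step, and a ``Heisenberg rotation aligning $\V_p$ with $\W_0$'' chosen independently at each point would not obviously satisfy it. Relatedly, the criterion is not powered by Carleson-type packing estimates in the David--Semmes sense: it is a direct telescoping construction of $F=\lim F_n$ on a fat Cantor set, and the $r^{\alpha}$ gain is used to make the successive errors summable so that the limit exists and remains bilipschitz.

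For $n=1$, the route you propose---upgrading Theorem \ref{CPBV} by taking $\Phi\circ\Psi$ as the parametrization---cannot work as stated: Bigolin--Vittone's example \eqref{BVEx} is an intrinsic $C^{1,\alpha}$ graph for which no Lipschitz map from an open subset of $\Pi$ onto a neighbourhood of the singular line has a Lipschitz inverse, precisely because the characteristic straightening map requires Euclidean Lipschitz regularity of $\varphi$. The paper instead uses the straightening map only to parametrize \emph{Lipschitz flags} $F_p$ (Lemma \ref{flagimpliesbilip}), which serve as scale-by-scale local models replacing the vertical tangent planes, and feeds the resulting approximation (Proposition \ref{p:contenutesSpPsi}, via extra vertical H\"older regularity) back into the same Theorem \ref{main}. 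So the $n=1$ case also goes through the iterative metric construction, not through a single global parametrization.
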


The following corollary is easier to read:

\begin{thm}\label{mainQualitative} Let $S \subset \He^n$ be a $C^{1,\alpha}_{\He}$-surface. Then $S$ is LI rectifiable. \end{thm}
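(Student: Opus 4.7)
The plan is to deduce this corollary from the quantitative Theorem \ref{mainIntro} by a localisation-and-covering argument. Three ingredients are needed: a local representation of the surface as an intrinsic graph, a cut-off producing a globally defined compactly supported model, and a countable exhaustion.

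For the local representation, I would invoke the intrinsic implicit function theorem of Franchi, Serapioni, and Serra Cassano: every non-characteristic point $p \in S$ has an open neighbourhood $U_p \subset \He^{n}$ such that $U_{p} \cap S$ is the intrinsic graph of a function $\varphi_p \colon \Omega_p \to \V$ defined on a relatively open subset $\Omega_p$ of some codimension-$1$ vertical subgroup $\W_p$, depending on $p$. The $C^{1,\alpha}_{\He}$-assumption on $S$, namely $\alpha$-H\"older continuity of the horizontal normal, translates to intrinsic $C^{1,\alpha}$-regularity of $\varphi_p$ on $\Omega_p$. Moreover, the characteristic set of any $\He$-regular surface has vanishing $\calH^{2n+1}$-measure, hence may be neglected when proving LI rectifiability.

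To pass from this local picture to the hypotheses of Theorem \ref{mainIntro}, fix a relatively compact open set $K$ with closure in $\Omega_p$ and a Euclidean cut-off $\chi \in C^{\infty}_{c}(\Omega_p)$ that equals $1$ on $K$. Set $\tilde\varphi_p := \chi \varphi_p$, extended by zero to $\W_p$. Provided $\tilde\varphi_p$ remains intrinsic $C^{1,\alpha}$ on $\W_p$ and is compactly supported, Theorem \ref{mainIntro} applies to its intrinsic graph, yielding LI rectifiability; and since that graph coincides with $U_p \cap S$ over $K$, the piece above $K$ inherits LI rectifiability. By second-countability, the non-characteristic part of $S$ can be written as a countable union of such pieces, so $S$ itself is LI rectifiable.

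The main technical obstacle I anticipate is the cut-off step. The intrinsic gradient operator is non-linear in $\varphi_p$ (of Burgers type), so multiplication by a Euclidean bump does not \emph{a priori} preserve intrinsic $C^{1,\alpha}$-regularity. This must be verified by direct computation, expressing the intrinsic derivative of $\chi \varphi_p$ as a polynomial combination of Euclidean derivatives of $\chi$ and $\varphi_p$, each factor being either smooth or $\alpha$-H\"older continuous. Should the direct approach prove awkward, one could alternatively invoke an intrinsic Whitney-type extension theorem to produce a globally defined compactly supported intrinsic $C^{1,\alpha}$-function agreeing with $\varphi_p$ on $K$, and use the graph of that extension instead.
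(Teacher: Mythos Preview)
Your overall plan coincides with the paper's: reduce to Theorem \ref{mainIntro} by locally containing $S$ in the intrinsic graph of a globally defined, compactly supported intrinsic $C^{1,\beta}$-function, and then cover $S$ countably. You also correctly identify the genuine obstacle, namely the extension/cut-off step.

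Two points deserve comment. First, your direct cut-off $\tilde\varphi_p = \chi\varphi_p$ is more problematic than you suggest: because $D^{\varphi}_{n+1} = \partial_{x_{n+1}} + \varphi\,\partial_t$ depends on $\varphi$ itself, multiplying by a Euclidean bump alters the very vector fields along which the intrinsic gradient is taken, so there is no clean product rule to fall back on, and the paper explicitly notes that no extension theorem for intrinsic $C^{1,\alpha}$-functions is available. The paper resolves this not by extending $\varphi_p$ but by extending the \emph{defining function} $f$ of the level set via the $C^1_{\He}$ Whitney extension theorem, checking by hand that the extension is in $C^{1,\alpha/3}_{\He}$ and that $X_1 f_1 \geq \tfrac12$ globally (Proposition \ref{appProp}). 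This forces $\{f_1=0\}$ to be the intrinsic graph of a compactly supported intrinsic $C^{1,\alpha/3}$-function containing a neighbourhood of $p$ in $S$; the drop from $\alpha$ to $\alpha/3$ is harmless for the qualitative conclusion. Your fallback suggestion of a Whitney-type argument is therefore the right instinct, but the key idea is to apply it to $f$ rather than to $\varphi_p$.

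Second, a minor correction: $\He$-regular surfaces have no characteristic points by definition (the defining function has non-vanishing horizontal gradient), so the remark about the characteristic set being $\calH^{2n+1}$-null is unnecessary here; that issue only arises when one starts from a Euclidean $C^1$ surface.
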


\begin{remark}
A first version of the present paper, by the third author,
contained Theorems \ref{mainIntro} and \ref{mainQualitative} in $\He^{1}$. After that version appeared on the arXiv, Antonelli
and Le Donne proved in \cite{antonelli2019pauls}, building on \cite{LDY}, that every $C^{\infty}$ hypersurface $S$
in $\mathbb{H}^n$, $n\geq 2$, is rectifiable by bilipschitz images
of subsets of $\mathbb{H}^{n-1}\times\mathbb{R}$.

The result of Antonelli-Le Donne follows from Theorem
\ref{mainQualitative} (or rather, the bilipschitz version stated in
Theorem \ref{surfaces}): the $C^{\infty}$ regularity of $S$,
and a result of Balogh \cite{MR2021034}, imply that
$\mathcal{H}^{2n+1}$ almost every point on $S$ has an
open neighbourhood $U$ such that $S\cap U$ is given as the level
set of a $C^{\infty}$ function $f:U \to \mathbb{R}$ with
nonvanishing \emph{horizontal gradient} $\nabla_{\mathbb{H}}f$.
Using that $f$ is Euclidean $C^{\infty}$ (or even Euclidean $C^{2}$), one concludes that $f$
satisfies condition \eqref{hHolder} in Definition
\ref{C1alpha}, possibly on a slightly smaller open set. Thus,
outside an $\mathcal{H}^{2n+1}$ null set, $S$ is locally a $C_{\mathbb{H}}^{1,\alpha}$ surface, and it
follows from Theorem \ref{surfaces} that $S$ is rectifiable by
bilipschitz images of compact subsets of $\He^{n-1}\times \R$.
\end{remark}

The notions of regularity appearing in the theorems above will be
formally introduced in Section \ref{s:C1alpha}.
 In brief, a $C^{1,\alpha}_{\He}$-surface is an $\He$-regular surface whose \emph{horizontal normal} is $\alpha$-H\"older continuous (in the metric $d_{\He}$). Then, roughly speaking, an intrinsic $C^{1,\alpha}$-function is a function $\W \to \V$ whose intrinsic graph $\Phi(\W)$ is a $C^{1,\alpha}_{\He}$-surface, but this is a little inaccurate; see Definition \ref{C1alphaGraphs} and Remark \ref{r:warning} for more precision.

The next example points out that Theorem \ref{mainQualitative} applies to the function in \eqref{BVEx}:

\begin{ex} The horizontal normal of the intrinsic graph of the function $\varphi$ from \eqref{BVEx} is
\begin{displaymath} \nu_{\He}(\Phi(0,x_2,t)) = \left(\frac{-1}{\sqrt{1 + (c_{\alpha}t^{2\alpha - 1})^{2}}},
\frac{c_{\alpha}t^{2\alpha - 1}}{\sqrt{1 + (c_{\alpha}t^{2\alpha -
1})^{2}}} \right), \qquad x_2 \in \R, \; t > 0, \end{displaymath}
where $c_{\alpha} := \alpha/(1 - \alpha)^{2}$, and
$\nu_{\He}(\Phi(0,x_2,t)) \equiv (-1,0)$ for $x_2 \in \R$, $t \leq
0$. This follows by combining the expression for the
\emph{intrinsic gradient} of $\varphi$ on \cite[p. 166]{MR2603594}
with a known relationship between the horizontal normal of
$\Phi(\W)$ and the intrinsic gradient of $\varphi$, see the
references around \eqref{form32}. With the explicit expression in hand, let us show that the horizontal normal map $\nu_{\He} \colon (\Phi(\W),d_{\He}) \to S^{1}$ is $(2\alpha - 1)/\alpha$-H\"older. First, observe by calculating derivatives that the functions $f,g \colon \R \to \R$ defined by $f(r) := -1/\sqrt{1 + r^{2}}$ and $g(r) := r/\sqrt{1 + r^{2}}$ are (globally) Lipschitz. Consequently, for $(0,x_{2},s),(0,x_{2}',t) \in \W$, we have
\begin{align*} |\nu_{\He}(\Phi(0,x_{2},s)) - \nu_{\He}(\Phi(0,x_{2}',t))| & \lesssim |f(c_{\alpha}s^{2\alpha - 1}) - f(c_{\alpha}t^{2\alpha - 1})| + |g(c_{\alpha}s^{2\alpha - 1}) - g(c_{\alpha}t^{2\alpha - 1})|\\
& \lesssim c_{\alpha} \cdot |s^{2\alpha - 1} - t^{2\alpha - 1}|.  \end{align*}
Next, noting that $(2\alpha -
1)/\alpha \in (0,1]$ for $\tfrac{1}{2} < \alpha \leq 1$, we have
\begin{displaymath} |s^{2\alpha - 1} - t^{2\alpha - 1}| \leq |s^{\alpha} - t^{\alpha}|^{(2\alpha - 1)/\alpha}
\lesssim_{\alpha} d_{\He}(\Phi(0,x_2,s),\Phi(0,x_2',t))^{(2\alpha
- 1)/\alpha}, \end{displaymath} 
as claimed. It follows that $\Phi(\mathbb{W})$ satisfies the assumptions of
Theorem \ref{mainQualitative}, which are made precise in Definition \ref{C1alpha} (see also
Remark \ref{normalRemark}). \end{ex}

The next definition explains the rest of the terminology in Theorem \ref{mainIntro}:

\begin{definition}[BP$G$BI]\label{BPdef} Fix
$n\in \mathbb{N}$ and set $(G,d_{G})=(\Pi,d_{\Pi})$ if $n=1$, and
$(G,d_{G})=(\mathbb{H}^{n-1}\times\mathbb{R},d_{\mathbb{H}^{n-1}\times\mathbb{R}})$
if $n\geq 2$. Let $E \subset \He^n$ be closed and
$(2n+1)$-regular. Then, $E$ has \emph{big pieces of $G$
bilipschitz images} (BP$G$BI) if there exist constants $L \geq 1$
and $\theta > 0$ such that the following holds: for every $p \in
E$ and $0 < r \leq \diam_{\He}(E)$, there exists a compact set $K
\subset B(0,r) \subset G$ and an $L$-bilipschitz map $f \colon K
\to \He^n$ such that
\begin{displaymath} \calH^{2n+1}(f(K) \cap [E \cap B(p,r)]) \geq \theta r^{2n+1}. \end{displaymath}
 \end{definition}
So, up to some technical assumptions,
 Theorem \ref{mainIntro} states that intrinsic $C^{1,\alpha}$-graphs in $\mathbb{H}^n$ are \emph{uniformly rectifiable} by $(G,d_{G})$,
 in the spirit  of David and Semmes \cite{DS1}. We next formulate a stronger result in $\He^{1}$. Informally, the point is that we can relax $C^{1,\alpha}$-regularity to Lipschitz regularity in the "horizontal" directions, but we still need to assume an $\epsilon$ of additional \emph{a priori} regularity in the vertical direction. To motivate the definition, we note that intrinsic $C^{1,\alpha}$-functions are locally Euclidean $(1+\alpha)/2$ H\"older
continuous along vertical lines by \cite[Proposition 4.2]{CFO2}. In $\He^{1}$, it turns out that this property of
\emph{extra vertical H\"older regularity} alone implies the conclusion of Theorem
\ref{mainIntro}.

\begin{thm}\label{mainIntroVertical} Let $S \subset \He^1$
be the intrinsic graph of a globally defined but compactly
supported intrinsic Lipschitz function with extra vertical H\"older
regularity, see Definition \ref{d:ExtraVerticalHolder}.
Then $S$ has big pieces of bilipschitz images of the parabolic
plane $(\Pi,d_{\Pi})$. In particular, $S$ is LI rectifiable.
\end{thm}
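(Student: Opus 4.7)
The plan is to apply the general big-pieces criterion developed earlier in the paper. Via the structure of Lemma \ref{flagimpliesbilip}, the criterion should reduce the BP bilipschitz property to a quantitative flag-type condition on $S$: namely, that at most points of $S$ one can fit a coherent sequence of approximating vertical planes across scales, with a suitable Carleson-type control on the scale-by-scale defect. The target map $K \to S$ with $K \subset \Pi$ will be produced implicitly by the criterion rather than constructed directly; this bypasses the need to run the Cole-Pauls/Bigolin-Vittone characteristic straightening map $\Psi$ explicitly, which would require Euclidean (not merely intrinsic) regularity of $\varphi$.

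The first step is to extract the needed Carleson control from the extra vertical H\"older regularity of $\varphi$. In $\He^{1}$, intrinsic Lipschitz functions are automatically $\tfrac{1}{2}$-H\"older along vertical lines; the hypothesis of Definition \ref{d:ExtraVerticalHolder} strengthens this to $\tfrac{1+\epsilon}{2}$-H\"older for some $\epsilon > 0$. Together with the intrinsic Lipschitz regularity in the horizontal direction, the expected output is a Dini- or Carleson-type bound on the $L^{2}$-oscillation of the horizontal normal $\nu_{\He}$ of $S$, playing the role that pointwise H\"older continuity of $\nu_{\He}$ plays in the proof of Theorem \ref{mainIntro}.

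Next, a Carleson stopping-time argument at a fixed base ball $B(p,r)$ should yield a big subpiece on which the flag condition is verified uniformly: at each point $q$ of the piece, the vertical planes $V_{k}$ best-approximating $S$ in balls $B(q,2^{-k}r)$ stay within the quantitative tolerance demanded by the criterion, with total quadratic deviation across scales bounded by a constant. Feeding this piece into the general big-pieces criterion produces a bilipschitz map from a subset of $(\Pi,d_{\Pi})$ onto a big piece of $S \cap B(p,r)$, with constants uniform in $p$ and $r$. This is precisely the BP bilipschitz statement of Definition \ref{BPdef}, and LI rectifiability follows by a standard covering argument.

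The main obstacle is the first step. Extra vertical H\"older regularity is a \emph{one-dimensional} pointwise condition, whereas the Carleson bound one needs lives on the \emph{two-dimensional} vertical subgroup $\W$, via the non-linear first-order operator $\nabla^{\varphi}\varphi$ that computes $\nu_{\He}$ in terms of $\varphi$. The horizontal intrinsic Lipschitz regularity only controls $\nu_{\He}$ in $L^{\infty}$, so the vertical gain of $\epsilon$ must be fully leveraged to produce an $L^{2}$ Dini improvement uniformly across scales. Closing this gap, and thereby justifying the flag-data input to Lemma \ref{flagimpliesbilip} without the global $C^{1,\alpha}$-control available in Theorem \ref{mainIntro}, is the technical heart of the argument and the place where the restriction to $\He^{1}$ is likely to be used in an essential way.
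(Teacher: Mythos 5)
Your plan has a genuine gap, and it sits exactly where you locate "the technical heart": the first step cannot be carried out in the form you propose, because you have chosen the wrong approximating objects. You propose to feed Theorem \ref{main} with approximating \emph{vertical planes} and an $L^{2}$/Carleson bound on the oscillation of $\nu_{\He}$. But extra vertical H\"older regularity together with intrinsic Lipschitz regularity gives no decay whatsoever for the oscillation of the horizontal normal: the model example $\varphi(y,t)=\tilde\varphi(y)$ with $\tilde\varphi$ merely Euclidean Lipschitz satisfies Definition \ref{d:ExtraVerticalHolder} with $H=0$, yet $\nu_{\He}$ can oscillate at unit amplitude at all scales and the graph has no well-approximating vertical plane at a typical point. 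The correct move -- and the one the paper makes -- is to replace vertical tangent planes by \emph{Lipschitz flags} $F_{p}$ built from a solution $\tau_{p}$ of the characteristic ODE \eqref{Cauchy problem}: the flag agrees with $p^{-1}\cdot S$ along the characteristic curve, and the extra vertical regularity then gives the \emph{pointwise} estimate $d_{\He}(\Psi_{p}(y,t),\Phi^{(p^{-1})}(y,\tau_{p}(y)+t))\leq H|t|^{(1+\alpha)/2}\leq H\|(y,t)\|^{1+\alpha}$ (Lemma \ref{betterProp3.36}), which is precisely the $2^{-n(1+\alpha)}$-type error that \eqref{ISO} demands. Relatedly, you claim to bypass the characteristic straightening map, but it is indispensable here in a different role: it is what makes the flag $F_{p}$ bilipschitz to $(\Pi,d_{\Pi})$ (Lemma \ref{flagimpliesbilip}), i.e.\ it parametrizes the \emph{model}, not the surface, and no Euclidean regularity of $\varphi$ is needed for that.

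A second, independent problem is your reading of the abstract criterion. Theorem \ref{main} is not a Carleson/stopping-time theorem: its hypotheses \eqref{ISO} and \eqref{comp} are uniform, pointwise conditions required at \emph{every} point and \emph{every} scale, with error $A2^{-n(1+\alpha)}$; the "big piece" is produced by the fat Cantor set construction (Proposition \ref{FCSProp}), which discards thin boundary regions of Christ cubes for separation, not by stopping on bad scales. So even if you could extract a Carleson packing condition on $\beta$-numbers or on $\nu_{\He}$, the criterion as stated would not accept it. Finally, your outline never engages with the compatibility condition \eqref{comp}, which in this setting is the hardest step: since the flag maps $\Psi_{p}$ are not group homomorphisms, there is no chain rule as in \eqref{chain}, and one must show directly that the flags based at nearby points $p$ and $q=i_{w_1\to p}(w_2)$ track each other with error $\lesssim\max\{d(w_i,w_j)\}^{1+\alpha/2}$ (Proposition \ref{p:NotesCor} and Lemma \ref{l:NotesLem5}); this comparison of the characteristic curves $\gamma_{p}$ and $\gamma_{q}$ is where most of the work lies.
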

The "extra vertical H\"older regularity" is often weaker than intrinsic $C^{1,\alpha}$-regularity: for example, intrinsic Lipschitz functions of the form $\varphi(0,y,t) = \tilde{\varphi}(y)$, with $\tilde{\varphi} \colon \R \to \R$ Lipschitz, are not necessarily intrinsic $C^{1,\alpha}$, but they are very smooth along vertical lines.

Theorem \ref{mainIntroVertical} can be used to give alternative
proofs for the  LI rectifiability of Euclidean $C^1$ hypersurfaces
in $\He^1$ (originally due to Cole-Pauls \cite{MR2247905}) and of
the $n=1$ case of Theorem \ref{mainIntro} (originally due to the
third author). Intrinsic graphs that satisfy the assumptions of
Theorem \ref{mainIntroVertical} are examples of sets on which the
$3$-dimensional \emph{Heisenberg Riesz transform} is
$L^2$-bounded, see
 \cite{fssler2018riesz}. The essential property of such graphs used here is
that they can be well approximated  by \emph{Lipschitz flags}
(Definition \ref{d:FlagSurface}), and that Lipschitz flags can be
bilipschitz parametrised using the characteristic straightening
map of Cole-Pauls and Bigolin-Vittone. No counterparts for these
properties are known in higher dimensions.

\medskip

 We close this section with a few questions:
\begin{questions}Are all   intrinsic Lipschitz graphs in $\He^n$
LI rectifiable? If so, do they have big pieces of  Lipschitz
images of $G$, or
 BP$G$BI? In the converse direction: Are (bi-)Lipschitz images of vertical subgroups in $\mathbb{H}^n$ iLG rectifiable? \end{questions}

\subsection{Bilipschitz maps between big pieces of metric spaces} As mentioned above Theorem \ref{mainIntro},
adapting the techniques of Cole-Pauls and Bigolin-Vittone seems
difficult without something close to $C^{1}$-regularity, or if
$n\geq 2$.
 Instead, Theorems \ref{mainIntro} and \ref{mainIntroVertical} will follow from an application of a general result concerning metric spaces, Theorem \ref{main} below. We now formulate the abstract hypotheses of that theorem.

 Let $(G,d_{G})$ and $(M,d_{M})$ be metric spaces. Assume the
following "local correspondence" between $G$ and $M$,
 for constants $\alpha > 0$, $L\geq 1$, $A \geq 1$, and for some $x_{0} \in G$ and $p_{0} \in M$ fixed.
 For every $x \in B_{G}(x_{0},1)$, $p \in B_{M}(p_{0},1)$ and
  $n \in \N \cup \{0\}$ there exists a map $i^{n}_{x \to p} \colon G \to M$ with $i^{n}_{x \to p}(x) = p$ such that
\begin{equation}\label{ISO}
L^{-1}d_G(y,z)-A2^{-n(1+\alpha)}\leq d_M(i^{n}_{x \to
p}(y),i^{n}_{x \to p}(z))\leq L d_G(y,z)+ A 2^{-n(1+\alpha)},\;
y,z\in B(x,2^{-n}).
 \end{equation} Moreover, if
$n \geq 0$, $x,y \in B_{G}(x_{0},1)$ and $p,q \in B_{M}(p_{0},1)$
with $d_{G}(x,y) \leq 2^{-n}$ and $i^{n}_{x \to p}(y) = q$, then
\begin{equation}\label{comp} d_{M}(i^{n}_{x \to p}(z),i^{n + 1}_{y \to q}(z)) \leq A2^{-n(1 + \alpha)} \qquad z \in B(x,2^{-n}). \end{equation}

These assumptions are reminiscent of \cite[(1.6)-(1.9)]{MR2907827}. See also \cite[Appendix 1]{CheegerColding} for related results. The assumption \eqref{ISO} postulates that the maps $i_{x \to p}^{n}$ are bilipschitz continuous at the scale $2^{-n}$, up to an error which is much smaller than $2^{-n}$. The assumption \eqref{comp} is "compatibility condition": it states that the maps $i_{x \to p}^{n}$ and $i_{y \to q}^{n + 1}$ nearly coincide at scale $2^{-n}$, again up to an error which is much smaller than $2^{-n}$. The \emph{a priori} condition "$i_{x \to p}^{n}(y) = q$" above \eqref{comp} is a technically convenient way of assuming that $p$ and $q$ are close to each other on $M$: indeed it follows from the hypotheses, including \eqref{ISO}, that
\begin{displaymath} d_{M}(p,q) = d_{M}(i_{x \to p}^{n}(x),i_{x \to p}^{n}(y)) \leq Ld_{G}(x,y) + A2^{-n(1 + \alpha)} \lesssim_{A,L} 2^{-n}. \end{displaymath}

The assumptions \eqref{ISO}-\eqref{comp} are designed to enable
the construction of bilipschitz maps from subsets of $G$ to $M$,
at least if $G$ is complete and doubling:
\begin{thm}\label{main}Assume that $(G,d_{G})$ is a complete metric space with $\diam_{G}(G) \geq 1$
and equipped with a nontrivial doubling measure $\mu$, and
$(M,d_{M})$ is complete. Assume that the conditions
\eqref{ISO}-\eqref{comp} hold for some $\alpha > 0$, $L\geq 1$,
and $A \geq 1$. Then, there exists a constant $\delta > 0$, a
compact set $K \subset B(x_{0},1)$ with $\mu(K) \geq \delta
\mu(B(x_{0},1))$ and a $2L$-bilipschitz embedding $F \colon K \to
M$ with $F(K) \subset B(p_{0},1)$. The constant $\delta > 0$ only
depends on the doubling constant of $(G,d_G,\mu)$, and the
constants $\alpha$, $L$, and $A$ in \eqref{ISO}-\eqref{comp}.
\end{thm}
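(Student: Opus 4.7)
The plan is to build $F$ scale-by-scale on a dyadic tree of subsets of $B(x_0,1)$ and pass to a uniform limit. Using the doubling property of $\mu$, fix a Christ-David-type hierarchical decomposition $\{\calD_n\}_{n \geq 0}$ of $B(x_0, 1)$: a nested family of partitions such that each $Q \in \calD_n$ has $\diam_G(Q) \leq 2^{-n}$ and every $Q \in \calD_{n+1}$ is contained in a unique parent in $\calD_n$. (Hierarchical $2^{-n}$-separated nets would serve equally well.)

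Fix a threshold scale $n_0 = n_0(A, L, \alpha)$ large enough that $CA\cdot 2^{-n_0 \alpha} \leq 1/(2L)$ for a suitable constant $C$ specified below, and pick one cube $Q^* \in \calD_{n_0}$ with $x_0 \in Q^*$. Doubling gives $\mu(Q^*) \geq \delta\, \mu(B(x_0,1))$ for a constant $\delta$ depending only on $n_0$ and the doubling constant. Recursively assign centers and image points: set $x_{Q^*} := x_0$, $p_{Q^*} := p_0$, and for each descendant $Q \in \calD_{n+1}$ of some $Q' \in \calD_n$ with $Q' \subset Q^*$, pick any $x_Q \in Q$ and set
\[
p_Q := i^n_{x_{Q'} \to p_{Q'}}(x_Q).
\]
The bound $d_G(x_Q, x_{Q'}) \leq \diam_G(Q') \leq 2^{-n}$ lets one apply \eqref{comp} at the transition from scale $n$ to $n+1$ with the choice $(x,p,y,q) = (x_{Q'}, p_{Q'}, x_Q, p_Q)$, yielding
\[
d_M\bigl(i^n_{x_{Q'} \to p_{Q'}}(z),\, i^{n+1}_{x_Q \to p_Q}(z)\bigr) \leq A\, 2^{-n(1+\alpha)}, \qquad z \in B(x_{Q'}, 2^{-n}).
\]
Since this bound is summable, the definition
\[
F(z) := \lim_{n \to \infty} i^n_{x_{Q_n(z)} \to p_{Q_n(z)}}(z), \qquad z \in Q^*,
\]
converges in the complete space $M$; here $Q_n(z) \in \calD_n$ denotes the unique scale-$n$ descendant of $Q^*$ containing $z$. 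The same telescoping shows that $p_{Q'} \in B_M(p_0,1)$ for every descendant of $Q^*$, so the anchors used above lie in the range where the hypotheses of the theorem are in force.

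For the bilipschitz estimate, take $z_1,z_2 \in Q^*$ with $d_G(z_1,z_2) \asymp 2^{-n}$ (necessarily $n \geq n_0$) and let $Q \subseteq Q^*$ be their least common ancestor in the tree, whose scale $n-c$ satisfies $c = O(1)$ because $\diam_G(Q) \geq d_G(z_1,z_2)$. Telescoping \eqref{comp} down each descending chain $Q = Q_{n-c}(z_i) \supset Q_{n-c+1}(z_i) \supset \cdots$ yields
\[
d_M\bigl(i^{n-c}_{x_Q \to p_Q}(z_i),\, F(z_i)\bigr) \lesssim A\,2^{-n(1+\alpha)}, \qquad i=1,2,
\]
and combining this with \eqref{ISO} at scale $n-c$, applied to $z_1,z_2 \in B(x_Q,2^{-(n-c)})$, gives
\[
L^{-1} d_G(z_1,z_2) - C'A\,2^{-n(1+\alpha)} \;\leq\; d_M(F(z_1), F(z_2)) \;\leq\; L\, d_G(z_1,z_2) + C'A\,2^{-n(1+\alpha)}.
\]
Because $2^{-n(1+\alpha)} = 2^{-n}\cdot 2^{-n\alpha} \leq d_G(z_1,z_2)\cdot 2^{-n_0 \alpha}$, the choice of $n_0$ absorbs the additive error into $d_G(z_1,z_2)/(2L)$, promoting the inequality to a $2L$-bilipschitz bound. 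Finally, set $K := \overline{Q^*}$; completeness plus doubling make $(G,d_G)$ proper, so $K$ is compact. The inclusion $F(K) \subset B(p_0,1)$ follows from $\diam_G(K) \leq 2^{-n_0}$, the bilipschitz estimate, and $d_M(F(x_0),p_0) \lesssim A\,2^{-n_0(1+\alpha)}$ from the convergence bound, once $n_0$ is coarse enough.

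The main obstacle is the bilipschitz step. The compatibility \eqref{comp} only relates two consecutive scales with matched anchor points, whereas the sequences $i^n_{x_{Q_n(z_1)} \to p_{Q_n(z_1)}}(z_1)$ and $i^n_{x_{Q_n(z_2)} \to p_{Q_n(z_2)}}(z_2)$ defining $F(z_1)$ and $F(z_2)$ descend along distinct cube chains. The resolution is to route both comparisons through a single reference map $i^{n-c}_{x_Q \to p_Q}$ anchored at the least common ancestor $Q$, which costs a constant factor $2^{c(1+\alpha)}$ in the error; the slack between $L$ and $2L$ in the conclusion is exactly what accommodates this loss, provided $n_0$ has been chosen coarse enough to dominate the compounded constants via $2^{-n_0 \alpha}$.
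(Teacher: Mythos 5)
There is a genuine gap in the bilipschitz step, and it is located exactly where you flag "the main obstacle": the claim that the least common ancestor $Q$ of $z_1,z_2$ sits at scale $n-c$ with $c=O(1)$. The inequality $\diam_G(Q)\geq d_G(z_1,z_2)\asymp 2^{-n}$ only bounds the scale of $Q$ from \emph{above} by $n+O(1)$ (a cube at scale $m$ has diameter $\leq 2^{-m}$, so $2^{-m}\gtrsim 2^{-n}$ forces $m\leq n+O(1)$); it gives no lower bound on $m$. In a generic Christ--David decomposition two points lying in distinct children of a coarse cube can be arbitrarily close to each other (they merely sit on opposite sides of a cube boundary), so the LCA can be at scale $m=n_0$ while $d_G(z_1,z_2)$ is as small as you like. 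In that case the additive error you inherit from \eqref{ISO} at the LCA scale, and from telescoping \eqref{comp} down from it, is of order $A2^{-m(1+\alpha)}$, which is not $\lesssim A2^{-n(1+\alpha)}$ and cannot be absorbed into $L^{-1}d_G(z_1,z_2)/2$. The lower bilipschitz bound therefore fails for such pairs, and since you take $K:=\overline{Q^*}$, a full cube, such pairs are unavoidable.

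The paper's proof is structurally the same as yours (anchor the maps at cube centres, telescope \eqref{comp}, apply \eqref{ISO} at the splitting scale) but it first replaces the full cube by a \emph{fat Cantor set} (Proposition \ref{FCSProp}): boundary collars of the Christ cubes are removed so that distinct cubes of $\calD_n$ satisfy the quantitative separation $d_G(Q_1,Q_2)\geq\tau 2^{-(1+\epsilon)n}$ with $\epsilon=\alpha/2$, while the remaining set still has measure $\geq\delta\mu(B(x_0,1))$ by the small-boundary property of Christ cubes. With this separation, if the LCA of $x,y$ is at scale $n$ then $d_G(x,y)\geq\tau 2^{-(n+1)(1+\alpha/2)}$, and since $1+\alpha>1+\alpha/2$ the error $A2^{-n(1+\alpha)}$ is dominated by $L^{-1}d_G(x,y)/4$ once $n\geq n_0$ is large. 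To repair your argument you must likewise pass to such a separated positive-measure subset (any exponent $\epsilon<\alpha$ in the separation works); the rest of your construction, including the convergence of $F$ and the containment $F(K)\subset B(p_0,1)$, then goes through as written.
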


The plan of the paper is to prove Theorem \ref{main} in Section
\ref{s:construction} and Appendix \ref{FCSPropProof}, and then
apply it to prove Theorem \ref{mainIntro} for $\He^n$, $n>1$, in
Section \ref{s:C1alpha}. In Section \ref{s:LipFlag} we prove
Theorem \ref{mainIntroVertical}, which yields as a corollary the
case $n=1$ of Theorem \ref{mainIntro}. Theorem
\ref{mainQualitative} is "morally" a direct corollary of Theorem
\ref{mainIntro}: by the implicit function theorem of Franchi,
Serapioni, and Serra Cassano \cite[Theorem 6.5]{FSSC}, a
$C^{1,\alpha}_{\He}$-surface is locally parametrisable by an
intrinsic $C^{1,\alpha}$-function over some vertical subgroup $\W
\subset \He^n$. However, the parametrisation may only be defined
on a strict subset of $\W$, and fail to literally satisfy the
assumptions of Theorem \ref{mainIntro}. As far as we know, there
is no extension theorem for intrinsic $C^{1,\alpha}$-functions
available. To bypass the issue, Appendix \ref{extensionAppendix}
contains a proposition saying that every point on a
$C_{\He}^{1,\alpha}$-surface has a neighbourhood which is
contained on the intrinsic graph of a globally defined, compactly
supported intrinsic $C^{1,\alpha/3}$-function. With this
proposition in hand, Theorem \ref{mainQualitative} is indeed a
corollary of Theorem \ref{mainIntro}.

\subsection{Notations} For $A,B > 0$, we write $A\lesssim B$ if there is a constant $C >
0$ such that $A \leq CB$. If we want to specify that the value of
$C$ is allowed to depend on an auxiliary parameter $h$, we will
write $A\lesssim_h B$.

\subsection{Acknowledgements}
K.F. and T.O. would like to thank Enrico Le Donne and S\'everine
Rigot for many fruitful discussions on the topic of the paper. We
are also grateful to Davide Vittone for tips on proving
Proposition \ref{appProp}, which was needed to reduce Theorem
\ref{mainQualitative} to Theorem \ref{mainIntro}, and to Damian D\k{a}browski for pointing out the reference
\cite{2017arXiv171103088A}. D.D.D. would like to thank Enrico Le
Donne and Raul Serapioni for important suggestions on the subject. Finally, we thank the anonymous referees for a careful reading of the manuscript, and for making many useful suggestions.

\section{Proof of the main theorem for metric spaces}\label{s:construction}

The proof of Theorem \ref{main} is inspired by the recent work of Le Donne and Young \cite{LDY} on the Carnot rectifiability of sub-Riemannian manifolds.

Before starting the proof of Theorem \ref{main} in earnest, we
need to introduce some terminology. Assume for a moment that
$n_{0} \geq 0$, and there exist families $\{\calD_{n}\}_{n \geq
n_{0}}$ of subsets of $G$, known as \emph{cubes}, with the
following properties:
\begin{itemize}
\item[(i)\phantomsection \label{i}] Each $\calD_{n}$ consists of a finite number of disjoint non-empty compact sets, and in particular $\card \calD_{n_{0}} = 1$.
\item[(ii) \phantomsection \label{ii}] For $n > n_{0}$, each cube $Q \in \calD_{n}$ is contained in a unique cube $\hat{Q} \in \calD_{n - 1}$, called the \emph{parent} of $Q$. For $Q \in \calD_{n - 1}$ fixed, write $\mathrm{ch}(Q) := \{Q \in \calD_{n} : \hat{Q} = Q\}$.
\item[(iii) \phantomsection \label{iii}] $\diam_{G}(Q) < 2^{-n}$ for all $Q \in \calD_{n}$.
\item[(iv) \phantomsection \label{iv}] There are constants $\epsilon, \tau > 0$ such that if $n \geq 0$ and $Q_{1},Q_{2} \in \calD_{n}$ are distinct, then $d_{G}(Q_{1},Q_{2}) \geq \tau 2^{-(1 + \epsilon)n}$.
\end{itemize}
An \emph{$(\epsilon,n_{0},\tau)$-Cantor set} is any set of the form
\begin{displaymath} K := \bigcap_{n \geq n_{0}} K_{n} := \bigcap_{n \geq n_{0}} \bigcup_{Q \in \calD_{n}} Q, \end{displaymath}
where the families $\calD_{n}$, $n_{0} \geq 0$, satisfy properties (i)-(iv).
\begin{definition}\label{FCS} A metric measure space $(X,d,\mu)$ \emph{admits fat Cantor sets} if for all $\epsilon > 0$ and $n_{0} \geq 0$, there exist constants $\delta = \delta(n_{0}) > 0$, and $\tau = \tau(\epsilon) > 0$ such that the following holds. For all $x \in X$, there exists an $(\epsilon,n_{0},\tau)$-Cantor set $K \subset B(x,1)$ with $\mu(K) \geq \delta \mu(B(x,1))$. \end{definition}

We will only use the assumption that $(G,d_{G},\mu)$ is doubling and complete to ensure that it admits fat Cantor sets.
\begin{proposition}\label{FCSProp} Every doubling and complete metric measure space $(X,d,\mu)$ of diameter $\geq 1$ admits fat Cantor sets. In other words, for every $\epsilon > 0$ and $n_{0} \geq 0$, the constants $\delta(n_{0}) > 0$ and $\tau(\epsilon) > 0$ can be found as in Definition \ref{FCS}. They are also allowed to depend on the doubling constant of $(X,d,\mu)$.
\end{proposition}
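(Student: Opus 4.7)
The plan is to build the required Cantor set from a Christ--Hyt\"onen--Kairema system of dyadic cubes on $(X,d,\mu)$, whose existence on any complete doubling metric measure space is standard. Such a system provides, for each $k \in \N$, a collection $\calQ_k$ of pairwise disjoint Borel sets partitioning $X$, nested in $k$, where each $Q \in \calQ_k$ has a center $x_Q$ with $B(x_Q,c_1 2^{-k}) \subset Q \subset B(x_Q, c_2 2^{-k})$, and satisfying the \emph{small boundary} estimate
\begin{displaymath}
\mu(\{y \in Q : d(y, X \setminus Q) \leq t \cdot 2^{-k}\}) \leq C t^{\eta} \mu(Q), \qquad t > 0,
\end{displaymath}
for constants $c_1, c_2, C, \eta > 0$ depending only on the doubling constant. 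Up to taking closures (which modifies each cube only on a $\mu$-null boundary), we may assume each $Q \in \calQ_k$ is closed.

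Given $x \in X$, $\epsilon > 0$ and $n_0 \geq 0$, fix $m \in \N$ with $2 c_2 2^{-m} < 1$, and pick $Q_0 \in \calQ_{n_0+m}$ with $x \in Q_0$. Then $\diam Q_0 < 2^{-n_0} \leq 1$, so $Q_0 \subset B(x,1)$, and iterating the doubling property from scale $c_1 2^{-n_0-m}$ up to scale $1$ gives $\mu(Q_0) \geq \delta_0(n_0) \mu(B(x,1))$ for some $\delta_0(n_0) > 0$. Write $\calQ'_n \subset \calQ_{n+m}$ for the subcubes of $Q_0$, and for $Q \in \calQ'_n$ and $n_0 < k \leq n$ denote by $Q^{(k)} \in \calQ'_k$ its ancestor. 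Set $s_k := \tau 2^{-(1+\epsilon)k}$ with a small parameter $\tau > 0$ to be chosen, and define
\begin{displaymath}
D_n(Q) := \{y \in Q : d(y, X \setminus Q^{(k)}) \geq s_k \text{ for every } n_0 < k \leq n\},
\end{displaymath}
together with $\calD_n := \{D_n(Q) : Q \in \calQ'_n,\, D_n(Q) \neq \emptyset\}$ for $n > n_0$ and $\calD_{n_0} := \{Q_0\}$. Properties (i)--(iv) in Definition \ref{FCS} follow directly: each $D_n(Q)$ is closed and bounded, hence compact since complete doubling spaces are proper; the defining conditions of $D_n(Q)$ include all those of $D_{n-1}(\hat Q)$ for the Christ parent $\hat Q$ of $Q$, so $D_n(Q) \subset D_{n-1}(\hat Q)$, and this parent is unique because Christ cubes of the same generation are disjoint; the diameter bound $\diam D_n(Q) \leq 2 c_2 2^{-(n+m)} < 2^{-n}$ is automatic; and for distinct $Q_1, Q_2 \in \calQ'_n$, any $y \in D_n(Q_1)$ satisfies $d(y, D_n(Q_2)) \geq d(y, X \setminus Q_1) \geq s_n = \tau 2^{-(1+\epsilon)n}$, supplying the required separation.

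The main obstacle is controlling the measure lost through the shrinking at every scale. The Cantor set is $K = \bigcap_n \bigcup_{Q \in \calD_n} Q = Q_0 \setminus \bigcup_{k > n_0} N_k$, where $N_k := \bigcup_{Q \in \calQ'_k} \{y \in Q : d(y, X \setminus Q) < s_k\}$. Applying the small boundary estimate at scale $k+m$ with parameter $t = \tau 2^m 2^{-\epsilon k}$ cube by cube and summing over $\calQ'_k$ yields
\begin{displaymath}
\mu(N_k) \leq C (\tau 2^m)^{\eta} 2^{-\epsilon \eta k} \mu(Q_0).
\end{displaymath}
Geometric summation in $k > n_0$ produces $\mu(\bigcup_{k > n_0} N_k) \leq C' (\tau 2^m)^{\eta} \mu(Q_0)$ with $C'$ depending only on $\epsilon$, $\eta$, $C$. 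Choosing $\tau = \tau(\epsilon) > 0$ so that $C' (\tau 2^m)^{\eta} \leq 1/2$ forces $\mu(K) \geq \tfrac12 \mu(Q_0) \geq \tfrac{\delta_0(n_0)}{2} \mu(B(x,1))$. Setting $\delta(n_0) := \delta_0(n_0)/2$ and keeping $\tau = \tau(\epsilon)$ as above, with both constants depending additionally only on the doubling constant of $(X,d,\mu)$, gives the $(\epsilon, n_0, \tau)$-Cantor set required by Definition \ref{FCS}.
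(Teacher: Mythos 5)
Your proof is correct and follows essentially the same route as the paper's: both restrict to a Christ cube $Q_{0}\subset B(x,1)$ of diameter $<2^{-n_{0}}$, trim from each descendant a boundary layer of width $\tau 2^{-(1+\epsilon)k}$ to force the separation condition (iv), and control the total measure removed via the small-boundary estimate $\mu(\partial_{\rho}Q)\lesssim\rho^{\eta}\mu(Q)$ and a geometric series, choosing $\tau=\tau(\epsilon)$ small. The only differences are cosmetic: you make the generation shift $m$ explicit where the paper just re-indexes, and you define the trimmed sets $D_{n}(Q)$ by intersecting over all ancestors rather than recursively, which yields the same sets.
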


The proof of the proposition above is essentially contained in
\cite[Section 4.1]{LDY}, but since the statement is not explicitly
given in \cite{LDY}, we repeat the details in Appendix
\ref{FCSPropProof}. What follows next is a proof of Theorem
\ref{main}, assuming Proposition \ref{FCSProp}. Fix $x_{0} \in G$
and $p_{0} \in M$. Let $\epsilon := \alpha/2$ (where $\alpha > 0$
is the parameter appearing in \eqref{ISO}-\eqref{comp}), let
$n_{0} \geq 0$ be a large integer to be determined later, and let
$K \subset B(x_{0},1)$ be an $(\epsilon,n_{0},\tau)$-Cantor set
associated to families of cubes $\{\calD_{n}\}_{n \geq n_{0}}$, as
in \nref{i}-\nref{iv}. For each $Q \in \calD_{n}$, $n \geq n_{0}$,
pick a \emph{centre} $c_{Q} \in Q$. Set $\calD_{n_{0}-1} := \{G\}$
and $K_{n_{0} - 1} := G$.

The map $F \colon K \to M$ will be defined as the limit of certain intermediate maps $F_{n} \colon K_{n} \to M$ for $n \geq n_{0} - 1$. Set $F_{n_{0}-1} \equiv p_{0}$. To proceed, assume that $n \geq n_{0}$, and the map $F_{n - 1} \colon K_{n - 1} \to M$ has already been defined. Then, fix $Q_{n - 1} \in \calD_{n - 1}$, and set
\begin{displaymath} F_{n}|_{Q} := i^{n}_{c_{Q} \to F_{n - 1}(c_{Q})}|_{Q}, \qquad Q \in \mathrm{ch}(Q_{n - 1}) \subset \calD_{n}. \end{displaymath}
The next lemma shows, in particular, that if $x \in K$, then the sequence $(F_{n}(x))_{n \in \N}$ is Cauchy in $(M,d_{M})$. Hence $F(x) := \lim_{n \to \infty} F_{n}(x)$ exists by the completeness of $(M,d_{M})$.

\begin{lemma} If $n_{0} \geq 1$ is large enough (depending on $A$ and the constant $L$ in \eqref{ISO}), the following holds for all $w \in K$ and $n \geq n_{0}$:
\begin{equation}\label{form15} d_{M}(F_{n}(w),F_{n + 1}(w)) \leq  A2^{-n(1 + \alpha)}. \end{equation}
\end{lemma}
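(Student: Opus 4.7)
The inequality \eqref{form15} is, in essence, a one-line consequence of the compatibility condition \eqref{comp}; the substantive work is checking that the side conditions of \eqref{comp} are met. My plan is to prove the estimate by induction on $n \geq n_{0}$, simultaneously with the auxiliary claim that $F_{m}$ takes values in $B_{M}(p_{0},1)$ on $K_{m}$ for every $m \leq n$.

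For the inductive step, fix $w \in K$ and let $Q_{n} \in \calD_{n}$ and $Q_{n+1} \in \calD_{n+1}$ be the unique cubes at these levels containing $w$, so that $Q_{n+1} \subset Q_{n}$ by \nref{ii}. Set
\begin{displaymath}
x := c_{Q_{n}}, \quad y := c_{Q_{n+1}}, \quad p := F_{n-1}(c_{Q_{n}}), \quad q := F_{n}(c_{Q_{n+1}}).
\end{displaymath}
The inductive definition of $F_{n}$ on $Q_{n}$, applied at the point $y \in Q_{n+1} \subset Q_{n}$, says exactly that $q = i^{n}_{x \to p}(y)$, so the hypothesis above \eqref{comp} is automatic. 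Property \nref{iii} gives $d_{G}(x,y) \leq \diam_{G}(Q_{n}) < 2^{-n}$ and likewise $w \in B_{G}(x,2^{-n})$. Therefore, granted that $x,y \in B_{G}(x_{0},1)$ and $p,q \in B_{M}(p_{0},1)$, applying \eqref{comp} with $z := w$ yields
\begin{displaymath}
d_{M}(F_{n}(w), F_{n+1}(w)) = d_{M}\bigl(i^{n}_{x \to p}(w),\, i^{n+1}_{y \to q}(w)\bigr) \leq A 2^{-n(1+\alpha)},
\end{displaymath}
which is \eqref{form15}.

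The bookkeeping of the side conditions is the main obstacle. The $G$-side is easy: since $K \subset B_{G}(x_{0},1)$ and cubes $Q \in \calD_{n}$ with $n \geq n_{0}$ have diameter $< 2^{-n_{0}}$, slightly modifying the radius in Proposition \ref{FCSProp} (or shrinking the fat Cantor set) arranges $Q \subset B_{G}(x_{0},1)$, so $x, y \in B_{G}(x_{0},1)$. The $M$-side is where the induction really pays off: observe that the proof above works verbatim when $w$ is replaced by any point that lies in a common cube with $x$ at level $n+1$ — in particular for the centres $c_{Q_{n}}$ and $c_{Q_{n+1}}$, which are nested in their own parent towers. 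Applying \eqref{form15} at those centres (as part of the inductive hypothesis, at levels $m \leq n-1$) and telescoping,
\begin{displaymath}
d_{M}(F_{n-1}(c_{Q_{n}}), p_{0}) \leq \sum_{m=n_{0}-1}^{n-2} d_{M}(F_{m}(c_{Q_{n}}), F_{m+1}(c_{Q_{n}})) \leq \sum_{m \geq n_{0}-1} A 2^{-m(1+\alpha)} \lesssim_{\alpha} A 2^{-n_{0}(1+\alpha)},
\end{displaymath}
and an analogous bound holds for $q = F_{n}(c_{Q_{n+1}})$ after including the single extra term estimated by \eqref{ISO} (which accounts for the $L$-dependence of $n_{0}$: $d_{M}(F_{n}(c_{Q_{n+1}}), F_{n-1}(c_{Q_{n+1}})) \leq L 2^{-(n-1)} + A2^{-(n-1)(1+\alpha)}$ from \eqref{ISO}, which is harmless as long as $n \geq n_{0}$ and $n_{0}$ is large in terms of $L$). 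Choosing $n_{0}$ so large that the right-hand side is $< 1$ keeps both $p$ and $q$ in $B_{M}(p_{0},1)$, closes the induction, and as a byproduct gives $F(K) \subset B_{M}(p_{0},1)$, which is needed for the conclusion of Theorem \ref{main}.
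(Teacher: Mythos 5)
Your proposal is correct and follows essentially the same route as the paper: apply \eqref{comp} with the substitutions $x=c_{Q_n}$, $y=c_{Q_{n+1}}$, $p=F_{n-1}(c_{Q_n})$, $q=F_n(c_{Q_{n+1}})$, and verify $p,q\in B_M(p_0,1)$ by a telescoping geometric sum (the paper's terse ``using the estimate above repeatedly'' is exactly your induction). One small imprecision: the displayed bound on $d_M(F_n(c_{Q_{n+1}}),F_{n-1}(c_{Q_{n+1}}))$ is not literally an instance of \eqref{ISO}, which compares two points under the \emph{same} map $i^n_{x\to p}$ rather than one point under maps of consecutive levels; the correct route is either the inductive hypothesis \eqref{form15} at level $n-1$ applied to $c_{Q_{n+1}}\in K_n$, or the paper's estimate $d_M(q,p)=d_M(i^n_{x\to p}(y),i^n_{x\to p}(x))\leq Ld_G(x,y)+A2^{-n(1+\alpha)}$ combined with the bound already obtained for $d_M(p,p_0)$.
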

\begin{proof} For $w \in K$, let $c_{n}(w)$ be the centre of $Q_{n}(w)$, where $Q_{n}(w)$ is the unique cube in $\calD_{n}$ containing $w$. One can infer \eqref{form15} from the compatibility condition \eqref{comp} in the following way:
\begin{align*} d_{M}(F_{n}(w),F_{n + 1}(w)) = d_{M}(i^{n}_{c_{n}(w) \to F_{n - 1}(c_{n}(w))}(w),i^{n + 1}_{c_{n + 1}(w) \to F_{n}(c_{n + 1}(w))}(w)) \leq  A2^{-n(\alpha + 1)}. \end{align*}
To check that the assumptions of \eqref{comp} are really in force, use the notational substitutions
\begin{displaymath} q :=  F_{n}(c_{n + 1}(w)), \: p := F_{n - 1}(c_{n}(w)), \: x := c_{n}(w), y := c_{n + 1}(w), \quad \text{and} \quad z := w.\end{displaymath}
Then, note that $d_{G}(x,y) \leq \diam_{G}(Q_{n}(x)) < 2^{-n}$ by \nref{iii}, $z \in B(x,2^{-n})$ again by \nref{iii}, and
\begin{equation}\label{form38} q = F_{n}(c_{n + 1}(w)) = i_{c_{n}(w) \to F_{n - 1}(c_{n}(w))}^{n}(c_{n + 1}(w)) = i^{n}_{x \to p}(y). \end{equation}
by definition. This shows that the assumptions of \eqref{comp} are valid, except for one small issue: is it clear that $p,q \in B(p_{0},1)$? For $n = n_{0}$, simply $p = F_{n_{0} - 1}(c_{n_{0}}(w)) = p_{0}$. Also, using \eqref{form38} and \eqref{ISO},
we find that
\begin{displaymath} d_{M}(q,p) = d_{M}(i^{n}_{x \to p}(y),i^{n}_{x \to p}(x)) \leq L d_{G}(x,y) + A2^{-n(1 + \alpha)}
\lesssim_{L} A2^{-n}. \end{displaymath}
Recalling the definitions of $p,q$, and using the estimate above
repeatedly shows that $\max\{d_{M}(p,p_{0}),d_{M}(q,p_{0})\}
\lesssim_{L} A2^{-n_{0}}$ for all $n \geq n_{0}$. In particular,
$p,q \in B(p_{0},1)$ if $n_{0} \geq 1$ is large enough, depending
on $A$ and the constant $L$ in \eqref{ISO}. The proof of the lemma
is complete.
\end{proof}

As an immediate corollary, one deduces the useful estimate
\begin{equation}\label{form4} d_{M}(F_{n}(x),F(x)) \lesssim A2^{-n(1 + \alpha)}, \qquad x \in K, \: n \geq n_{0}. \end{equation}
It remains to prove that $F$ is $2L$-bilipschitz on $K$ if the
index $n_{0} \in \N$ was chosen large enough, depending on the
parameters
 $\alpha > 0$, $L\geq 1$ and $A \geq 1$. Fix $x,y \in K$ arbitrary with $x \neq y$, and let $Q \in \calD_{n}$, $n \geq n_{0}$, be the smallest cube with $x,y \in Q$ (thus $x,y$ lie in distinct cubes in $\calD_{n + 1}$). Write $c := c_{Q} \in Q$. Then, using properties \nref{iii}-\nref{iv} of the cubes $\calD_{n}$,
\begin{equation}\label{form1} \max\{d_{G}(x,c),d_{G}(y,c)\} \leq 2^{-n} \quad \text{and} \quad \tau 2^{-(n + 1)(1 + \epsilon)} \leq d_{G}(x,y) \leq 2^{-n}. \end{equation}
Also, by definition,
\begin{displaymath} F_{n}(x) = i^{n}_{c \to F_{n - 1}(c)}(x) \quad \text{and} \quad F_{n}(y) = i^{n}_{c \to F_{n - 1}(c)}(y), \end{displaymath}
since $x,y \in Q$. We deduce from \eqref{ISO} (using \eqref{form1}) that
\begin{equation}\label{form7} d_{M}(F_{n}(x),F_{n}(y)) = d_{M}(i^{n}_{c \to F_{n - 1}(c)}(x),i^{n}_{c \to F_{n - 1}(c)}(y)) \geq
L^{-1} d_{G}(x,y) - A2^{-n(1 + \alpha)}. \end{equation} We also
have the upper bound analogous to \eqref{form7},
\begin{displaymath} d_{M}(F_{n}(x),F_{n}(y)) \leq
L d_{G}(x,y) + A2^{-n(1 + \alpha)}. \end{displaymath} Recalling
that $\epsilon = \alpha/2$ and $\tau=\tau(\varepsilon)$, the error
term $A2^{-n(1 + \alpha)}$ is smaller than $$L^{-1}\tau 2^{-(n +
1)(1 + \epsilon)}/4 \leq L^{-1}d_{G}(x,y)/4$$ for $n \geq n_{0}$,
if $n_{0} \in \N$ was chosen large enough, depending on $A$, $L$
and $\alpha$. Thus,
\begin{equation}\label{form2}L^{-1} \frac{3 d_{G}(x,y)}{4} \leq d_{M}(F_{n}(x),F_{n}(y)) \leq L\frac{5 d_{G}(x,y)}{4}. \end{equation}
Finally, if $n_{0} \geq 0$ is large enough, depending again on
$\alpha$, $L$ and $A$, one sees from \eqref{form4}-\eqref{form1}
that
\begin{equation}\label{form3} \max\{d_{M}(F(x),F_{n}(x)),d_{M}(F(y),F_{n}(y))\} \leq L^{-1}\frac{d_{G}(x,y)}{8}. \end{equation}
It then follows by combining \eqref{form2}, \eqref{form3}, and the triangle inequality that
\begin{displaymath} L^{-1}\frac{d_{G}(x,y)}{2} \leq d_{M}(F(x),F(y)) \leq L 2d_{G}(x,y), \end{displaymath}
as desired. The proof of Theorem \ref{main} is complete, except for the claim $F(K) \subset B(p_{0},1)$. Pick $x \in K \subset Q_{n_{0}} \subset B(c_{Q_{n_{0}}},2^{-n_{0}})$, and write, using \eqref{ISO},
\begin{displaymath} d_{M}(p_{0},F_{n_{0}}(x)) = d_{M}(i^{n_{0}}_{c_{Q_{n_{0}}} \to p_{0}}(c_{Q_{n_{0}}}),i^{n_{0}}_{c_{Q_{n_{0}}} \to p_{0}}(x)) \leq
Ld_{G}(c_{Q_{n_{0}}},x) + A2^{-n_{0}} \leq (L + A)2^{-n_{0}}.
\end{displaymath} Further, it follows from \eqref{form4} that
$d_{M}(F(x),F_{n_{0}}(x)) \lesssim A2^{-n_{0}}$. So, if $n_{0}
\geq 2$ was chosen large enough, depending on $A$ and $L$, one has
$F(x) \in B(p_{0},\tfrac{1}{2})$. Finally, since $\diam_{G}(K)
\leq 2^{-n_{0}} \leq \tfrac{1}{4L}$ for $n_0$ large enough
depending on $L$, and $F$ is $2L$-Lipschitz, one has $F(K) \subset
B(F(x),\tfrac{1}{2}) \subset B(p_{0},1)$. The proof of Theorem
\ref{main} is complete.


\section{Graphs and surfaces with H\"older-continuous horizontal normals}\label{s:C1alpha}

Throughout this section, we use coordinates
$(x_1,\ldots,x_{2n},t)$ on
 $\mathbb{H}^n$ as defined at the beginning of Section
 \ref{s:introd}. In these coordinates, a frame for the
 left invariant vector fields on $\mathbb{H}^n$ is given by
 \begin{equation}\label{eq:vfd}
 X_i = \partial_{x_i}-\tfrac{x_{n+i}}{2}\partial_t,\quad X_{n+i} = \partial_{x_{n+i}}+\tfrac{x_{i}}{2}\partial_t,\text{ for
$i=1,\ldots,n$,}\quad
 \text{and}\quad
 T=\partial_t,
 \end{equation} which yields the nontrivial commutator relations
\begin{displaymath}
[X_i,X_{n+i}]=T,\quad i=1,\ldots,n.
\end{displaymath}
The \emph{horizontal gradient} of a function $f:U \subset
\mathbb{H}^n \to \mathbb{R}$ is
\begin{displaymath}
\nabla_{\He}f=(X_1 f,\ldots,X_{2n} f).
\end{displaymath}
A \emph{vertical subgroup $\mathbb{W}$ of codimension $1$} in
$\mathbb{H}^n$ is, in the above coordinate system, a
$2n$-dimensional subspace of $\mathbb{R}^{2n+1}$ containing the
$t$-axis. Given such a subgroup $\mathbb{W}$, we denote by
$\mathbb{V}$ its Euclidean orthogonal complement. We recall that
$\pi_{\W} \colon \He^n \to \W$ is the \emph{vertical projection}
to $\W$, and $\pi_{\V} \colon \He^n \to \V$ is the
\emph{horizontal projection} to $\V$, induced by the splitting
$\He = \W \cdot \V$, see \cite[Proposition 2.2]{FSS}. In
particular,
\begin{displaymath}
p= \pi_{\W}(p) \cdot \pi_{\V}(p),\quad\text{for all
}p\in\mathbb{H}^n.
\end{displaymath}

\subsection{Definitions and preliminaries}\label{s:DefPrelim} A \emph{$C^{1,\alpha}_{\He}$-surface} is locally a non-critical level set of a $C^{1,\alpha}_{\He}$-function $f \colon \He^n \to \R$.
Here is the precise definition:

\begin{definition}[$C^{1,\alpha}_{\He}$-surfaces]\label{C1alpha} Let $S \subset \He^n$ be an $\He$-regular surface in the sense of Franchi, Serapioni, and Serra Cassano \cite[Definition 6.1]{FSSC}: for every $p \in S$, there exists an open ball $B(p,r)$ and a function $f \in C^{1}_{\He}(B(p,r))$ such that $\nabla_{\He}f(p) \neq 0$, and
\begin{equation}\label{eq:Slevel} S \cap B(p,r) = \{q \in B(p,r) : f(q) = 0\}. \end{equation}
For $0 < \alpha \leq 1$, the set $S$ is called a
\emph{$C^{1,\alpha}_{\He}$-surface} if one can choose $f$ so that
there exists a constant $H = H_{p} \geq 1$ such that
\begin{equation}\label{hHolder} |\nabla_{\He}f(q_{1}) - \nabla_{\He}f(q_{2})|
 \leq Hd_{\He}(q_{1},q_{2})^{\alpha}, \qquad q_{1},q_{2} \in S \cap B(p,r).  \end{equation}
\end{definition}

\begin{remark}\label{normalRemark} If $S \cap B(p,r) = B(p,r) \cap \{f = 0\}$, as above, then \cite[Theorem 6.5]{FSSC} states that, after making $r > 0$ possibly a little smaller, the inward-pointing horizontal normal of $E = \{f < 0\}$ is given by the expression
\begin{displaymath} \nu_{E}(q) = -\frac{\nabla_{\He}f(q)}{|\nabla_{\He}f(q)|}, \qquad q\in S \cap B(p,r). \end{displaymath}
Clearly, if $f$ satisfies \eqref{hHolder}, then this  choice of
horizontal normal is (locally) $\alpha$-H\"older continuous as a
map $(S,d_{\He}) \to S^{2n-1}$.

Conversely, if $S\subset \mathbb{H}^n$ is an $\mathbb{H}$-regular
surface with $\alpha$-H\"older continuous $\nu_E$, if $p$ is an
arbitrary point in $S$, and $r> 0$ is small enough, we claim that
there exists $f\in C_{\mathbb{H}}^1(B(p,r))$ so that
\eqref{eq:Slevel} and \eqref{hHolder} hold. Not every function $f$
which satisfies \eqref{eq:Slevel} necessarily fulfills
\eqref{hHolder}, cf.\ the related Remark \ref{r:warning} below. In
order to find $f$ which satisfies simultaneously the two
conditions, it is convenient to write $S$ locally as an
\emph{intrinsic graph}. First, by assumption there exists $i\in
\{1,\ldots,2n\}$ so that the component $\nu_{E}^i$ does not vanish
on $S \cap B(p,r)$ for small enough $r>0$. Without loss of
generality, we may assume that $i=1$ and $\nu_{E}^i< 0$ on $S \cap
B(p,r)$. Then, the implicit function theorem of Franchi,
Serapioni, and Serra Cassano \cite[Theorem 6.5]{FSSC}, combined
with \cite[Theorem 1.2]{MR2223801}, implies that for small enough
$r>0$, the set $S \cap B(p,r)$ can be written as {intrinsic graph}
in $X_1$-direction of a function $\varphi$ whose \emph{intrinsic
gradient} $\nabla^{\varphi}\varphi$ (in the sense of Definition
\ref{d:IntrGrad}) exists and is a continuous
$\mathbb{R}^{2n-1}$-valued function. This allows us to express
locally the horizontal normal  $\nu_{E}$ in a convenient form,
cf.\ \eqref{form32}. Using this expression, it is easy to see that
$\alpha$-H\"older continuity of $\nu_{E}$ implies
$\alpha$-H\"older continuity of $[\nabla^{\varphi}\varphi] \circ
\pi_{\mathbb{W}}|_{S\cap B(p,r)}$ for $r$ small enough, cf.\
\eqref{eq:HolForm}. Then
\begin{displaymath}
f(x_1,\ldots,x_{2n},t):=
x_1-\varphi\left(\pi_{\mathbb{W}}(x_1,\ldots,x_{2n},t)\right)=
x_1-\varphi\left(0,x_2,\ldots,x_{2n},t+\tfrac{1}{2}x_1x_{n+1}\right)
\end{displaymath}
has the properties \eqref{eq:Slevel} and \eqref{hHolder}. To see
this, use \cite[Proposition 2.22]{MR2223801} for the expression of
$\nabla_{\He}f$ in terms of $\nabla^{\varphi}\varphi$, and note
that $X_1 f =1$.
\end{remark}

A case of particular interest in this paper are intrinsic graphs
$S \subset \He^n$ that also happen to be
$C^{1,\alpha}_{\He}$-surfaces. Specifically, consider the
following definition:
\begin{definition}[Intrinsic $C^{1,\alpha}$-functions and intrinsic $C^{1,\alpha}$-graphs]\label{C1alphaGraphs} Let
\begin{displaymath}
\W := \{(0,x_2,\ldots,x_{2n},t) : (x_2,\ldots,x_{2n},t)  \in
\R^{2n}\}
\end{displaymath}
 and $\V := \{ (x_1,0\ldots,0): x_1 \in \R\} \cong \R$. Let $U \subset \W$ be open, and let $\varphi \colon U \to \V$ be continuous. Write $\Phi(w) := w \cdot \varphi(w)$ for the \emph{graph map} of $\varphi$. We say that $\varphi$ is an \emph{intrinsic $C^{1,\alpha}$-function on $U$}, denoted $\varphi \in C^{1,\alpha}_{\He}(U)$ if
\begin{itemize}
\item[(i)] $S := \Phi(U) \subset \He^n$ is a
$C^{1,\alpha}_{\He}$-surface in the sense of Definition
\ref{C1alpha}, and \item[(ii)] the horizontal normal $\nu_{\He} =
(\nu_{\He}^{1},\ldots,\nu_{\He}^{2n}) \colon S \to \R^{2n}$ of the
subgraph
\begin{displaymath}
\{w \cdot v : v < \varphi(w)\}
\end{displaymath}
satisfies $\nu_{\He}^{1}(p) < 0$ for all $p \in S$.
\end{itemize}
The intrinsic graph $S$ of any intrinsic $C^{1,\alpha}$-function
is an \emph{intrinsic $C^{1,\alpha}$-graph}.
\end{definition}

\begin{remark}\label{r:warning} The conditions (i)-(ii) are $C^{1,\alpha}$-versions of the conditions appearing in \cite[Theorem 1.2(i)]{MR2223801}. Condition (ii) is not as odd as it looks: a similar hypothesis would also be required to characterise $C^{1,1}$-functions $f \colon \R \to \R$ via the properties of $\Gamma(f) := \{(x,f(x)) : x \in \R\}$. To see this, consider $f \colon \R \to \R$ given by $f(x) = \mathrm{sgn}(x)\sqrt{|x|}$. Then $\Gamma(f)$ is a $C^{1,1}$-surface as a subset of $\R^{2}$, because $\Gamma(f)$ can also be written as $\Gamma(f) = \{(\sgn(y)y^{2},y) : y \in \R\}$, where $y \mapsto \sgn(y)y^{2} \in C^{1,1}(\R)$. Nonetheless, $f \notin C^{1,1}(\R)$. \end{remark}

A previous notion of \emph{intrinsic $C^{1,\alpha}$-functions} and
\emph{graphs} in $\mathbb{H}^1$ already exists, see
\cite[Definition 2.16]{CFO2} or \eqref{CFOdef} below, and it looks
different than Definition \ref{C1alphaGraphs}. The connection
needs to be clarified immediately, because a result concerning
intrinsic $C^{1,\alpha}$-functions in the sense of \cite{CFO2},
namely \cite[Proposition 4.2]{CFO2}, will be used also in this
paper. Definition 2.16 in \cite{CFO2} was stated for
$\mathbb{H}^1$, but it can be extended in an obvious way to higher
dimensional Heisenberg groups, and this version appears in
Proposition \ref{equivProp}. The precise formulation requires the
notion of \emph{intrinsic differentiability}.

\begin{definition}\label{d:IntrDiff} A function $\varphi:\W \to
\V$ is \emph{intrinsically differentiable} at the point $w_0\in
\W$ if there exists a map $L:\W \to \V$ whose intrinsic graph
$\{w\cdot L(w):\;w\in \W\}$ is a vertical subgroup and which
satisfies
\begin{displaymath}
\lim_{\|w\|\to 0} \frac{\|L(w)^{-1}\cdot
\varphi^{(p^{-1})}(w)\|}{\|w\|}=0,\quad w\in \W.
\end{displaymath}
Here $p=\Phi(w_0)$, and $\varphi^{(p^{-1})} \colon \W \to \V$ is
the unique function $\W \to \V$ whose intrinsic graph is $p^{-1}
\cdot \Phi(\mathbb{W})$ (see \eqref{form34} for a formula).
\end{definition}
For equivalent definitions, see \cite[Proposition
4.76]{MR3587666}. If $\varphi$ is intrinsically differentiable at
$w_0$, then there is a unique map $L:\W \to \V$ with the
properties stated in Definition \ref{d:IntrDiff}. This map is
called the \emph{intrinsic differential} of $\varphi$ at $w_0$ and
it is denoted by $d^{\varphi}\varphi(w_0)$. Moreover, there is a
unique vector $\nabla^{\varphi}\varphi(w_0)\in\mathbb{R}^{2n-1}$,
such that
\begin{displaymath}
d^{\varphi}\varphi(w_0)(w)=\langle\nabla^{\varphi}\varphi(w_0),
\pi(w) \rangle,\quad w\in \W,
\end{displaymath}
where $\langle \cdot,\cdot\rangle$ denotes the scalar product in
$\mathbb{R}^{2n-1}$, and
\begin{displaymath}
\pi(0,x_2,\ldots,x_{2n},t)=(x_2,\ldots,x_{2n}).
\end{displaymath}

\begin{definition}\label{d:IntrGrad}
If $\varphi:\W\to \V$ is intrinsically
differentiable at $w_0 \in \W$, its \emph{intrinsic gradient at
$w_0$} is the vector $\nabla^{\varphi}\varphi(w_0)$. The
components of $\nabla^{\varphi}\varphi(w_0)$ are denotes as
follows:
\begin{displaymath}
\nabla^{\varphi}\varphi(w_0)=(D_2^{\varphi}\varphi(w_0),\ldots,D_{2n}^{\varphi}\varphi(w_0)).
\end{displaymath}
\end{definition}
More information about the functions $D_i^{\varphi}\varphi$,
$i=2,\ldots,2n$, will only be required once we arrive at the proof
of Proposition \ref{p:Approx}, so we postpone the detailed
discussion. At this point we just mention that the component
$D_{n+1}^{\varphi}\varphi$ will play a distinguished role as we
consider intrinsic graphs in the $X_1$ direction, and
$[X_1,X_i]=0$ for all $i=1,\ldots,2n$, except for $i=n+1$.

\begin{proposition}\label{equivProp} Let $0<\alpha \leq 1$ and $\varphi \colon \W \to \V$ be a compactly supported function between the subgroups
$\W$ and $\V$ in $\He^n$. Then $\varphi \in
C^{1,\alpha}_{\He}(\W)$ in the sense of Definition
\ref{C1alphaGraphs} if and only if $\varphi$ is intrinsically
differentiable, and the intrinsic gradient
$\nabla^{\varphi}\varphi$ satisfies
\begin{equation}\label{CFOdef} |\nabla^{\varphi^{(p^{-1})}} \varphi^{(p^{-1})}(w) - \nabla^{\varphi^{(p^{-1})}}\varphi^{(p^{-1})}(0)| \leq H\|w\|^{\alpha}, \qquad w \in \W, \: p \in \Phi(\W) \end{equation}
for a constant $H \geq 1$.
\end{proposition}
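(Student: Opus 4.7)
The plan is to use the standard formula expressing the horizontal normal of an intrinsic graph in terms of the intrinsic gradient. By the references around \eqref{form32}, once $\varphi$ is intrinsically differentiable with continuous intrinsic gradient, the inward horizontal normal of the subgraph of $\varphi$ is
\begin{displaymath}
\nu_{\He}(\Phi(w)) = \frac{(-1,\nabla^{\varphi}\varphi(w))}{\sqrt{1+|\nabla^{\varphi}\varphi(w)|^{2}}}, \qquad w \in \W.
\end{displaymath}
The map $v \mapsto (-1,v)/\sqrt{1+|v|^{2}}$ is a bi-Lipschitz homeomorphism between $\R^{2n-1}$ and $\{u \in S^{2n-1} : u^{1} < 0\}$, so $\alpha$-H\"older continuity of $\nu_{\He}$ on $(S,d_{\He})$ is equivalent to $\alpha$-H\"older continuity of $\nabla^{\varphi}\varphi$ in the pulled-back distance $(w_{1},w_{2}) \mapsto d_{\He}(\Phi(w_{1}),\Phi(w_{2}))$. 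The sign condition $\nu_{\He}^{1}<0$ in Definition \ref{C1alphaGraphs}(ii) is built into the formula.

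For ($\Rightarrow$), assume $\varphi \in C^{1,\alpha}_{\He}(\W)$ per Definition \ref{C1alphaGraphs}. The implicit function theorem \cite[Theorem 6.5]{FSSC} combined with \cite[Theorem 1.2]{MR2223801}, as used in Remark \ref{normalRemark}, supplies that $\varphi$ is intrinsically differentiable with continuous intrinsic gradient. By Remark \ref{normalRemark} and the bi-Lipschitz inverse of the formula above, $\alpha$-H\"older continuity of $\nu_{\He}$ on $S \cap B(p,r)$ transfers to a local H\"older estimate for $\nabla^{\varphi}\varphi$ in the pulled-back metric. Compact support of $\varphi$ (and the fact that $S$ agrees with $\W$ away from $\spt(\varphi)$, where the normal is constant) allows one to cover $S$ by finitely many such patches and deduce a global estimate
\begin{displaymath}
|\nabla^{\varphi}\varphi(w_{1}) - \nabla^{\varphi}\varphi(w_{2})| \leq C\, d_{\He}(\Phi(w_{1}),\Phi(w_{2}))^{\alpha}, \qquad w_{1},w_{2}\in \W.
\end{displaymath}
Since $p^{-1}\cdot S$ is the intrinsic graph of $\varphi^{(p^{-1})}$ and is itself a compactly supported intrinsic $C^{1,\alpha}$-graph, the same estimate holds for $\varphi^{(p^{-1})}$ with a constant uniform in $p$, by left-invariance of $d_{\He}$. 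Setting $w_{2}=0$ and using that the graph of $\varphi^{(p^{-1})}$ passes through the origin yields \eqref{CFOdef}.

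For ($\Leftarrow$), set
\begin{displaymath}
f(x_{1},\ldots,x_{2n},t) := x_{1} - \varphi(\pi_{\W}(x_{1},\ldots,x_{2n},t)),
\end{displaymath}
so $\{f=0\}=S$ by construction. By \cite[Proposition 2.22]{MR2223801}, $X_{1}f\equiv 1$ on all of $\He^{n}$, while on $S$ the components $X_{i}f|_{S}$ for $i=2,\ldots,2n$ are explicit expressions in $\nabla^{\varphi}\varphi \circ \pi_{\W}|_{S}$. Hence $|\nabla_{\He}f|\geq 1$ is nonvanishing and $\nu_{\He}^{1}=-1/|\nabla_{\He}f|<0$, matching condition (ii) of Definition \ref{C1alphaGraphs}. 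To verify \eqref{hHolder}, condition \eqref{CFOdef} applied at each $p=\Phi(w_{0})\in S$ expresses precisely the pulled-back H\"older bound for $\nabla^{\varphi}\varphi$ at $w_{0}$ with a uniform constant $H$; the formula for $X_{i}f|_{S}$ then produces \eqref{hHolder}. Continuity of $\nabla_{\He}f$ in a neighbourhood of $S$ follows from continuity of $\nabla^{\varphi}\varphi$ and compactness of $\spt(\varphi)$, so $f\in C^{1}_{\He}$. The main obstacle in the whole argument is the translation bookkeeping: the intrinsic gradient does not transform by a simple chain rule under left translations, and one must identify $\nabla^{\varphi^{(p^{-1})}}\varphi^{(p^{-1})}(0)$ with $\nabla^{\varphi}\varphi(w_{0})$, and verify comparability between $\|w\|$ and the pulled-back distance on the translated graph, in order to pass between \eqref{CFOdef} and the pulled-back H\"older condition on $\nabla^{\varphi}\varphi$ itself. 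Once this invariance is recorded, both implications reduce to reading the explicit formula for $\nu_{\He}\circ \Phi$ in the appropriate direction.
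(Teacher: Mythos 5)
Your overall route is the same as the paper's: express $\nu_{\He}\circ\Phi$ through $\nabla^{\varphi}\varphi$ via \eqref{form32}, transfer the H\"older continuity of the normal to a H\"older bound for the gradient in the metric pulled back through $\Phi$, and then use left-invariance to reduce to the base point $0$. However, the two items you defer as ``translation bookkeeping'' are exactly where the substance of the paper's proof lies, and leaving them unproved is a genuine gap. First, the identity $\nabla^{\varphi^{(p^{-1})}}\varphi^{(p^{-1})}(w)=\nabla^{\varphi}\varphi(\pi_{\W}(p\cdot w))$ is not a formal chain rule; the paper proves it as a separate lemma (Lemma \ref{invintrgrad}). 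Second, and more importantly, after setting $w_{2}=0$ your argument only yields
\begin{displaymath}
|\nabla^{\varphi^{(p^{-1})}}\varphi^{(p^{-1})}(w)-\nabla^{\varphi^{(p^{-1})}}\varphi^{(p^{-1})}(0)|\lesssim d_{\He}\bigl(\Phi^{(p^{-1})}(w),0\bigr)^{\alpha},
\end{displaymath}
whereas \eqref{CFOdef} requires the right-hand side $\|w\|^{\alpha}$. The needed bound $\|\Phi^{(p^{-1})}(w)\|\lesssim\|w\|$, uniformly in $p\in\Phi(\W)$, amounts to a uniform intrinsic Lipschitz estimate for all translates $\varphi^{(p^{-1})}$; the paper derives it (estimate \eqref{eq:LipConclu}) from \emph{uniform} intrinsic differentiability (via \cite[Theorem 1.2]{MR2223801} and the compact support), and it does not follow from continuity of the gradient alone. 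A smaller issue: the map $v\mapsto(-1,v)/\sqrt{1+|v|^{2}}$ is \emph{not} globally bi-Lipschitz from $\R^{2n-1}$ onto the open hemisphere (the inverse degenerates as $|v|\to\infty$); you must restrict to the bounded range $|v|\leq\|\nabla^{\varphi}\varphi\|_{L^{\infty}}$, which is how the paper argues in \eqref{eq:HolForm}.

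In the converse direction there is a similar gap: you assert that $f(x_{1},\ldots,x_{2n},t)=x_{1}-\varphi(\pi_{\W}(\cdot))$ lies in $C^{1}_{\He}$ because $\nabla^{\varphi}\varphi$ is continuous, but pointwise intrinsic differentiability with continuous gradient does not by itself give $\He$-regularity of the graph (equivalently, existence and continuity of $\nabla_{\He}f$). One needs \emph{uniform} intrinsic differentiability, which the paper extracts from the hypothesis \eqref{CFOdef} through Proposition \ref{p:Approx} before invoking \cite[Theorem 1.2]{MR2223801}. Your proposal never uses \eqref{CFOdef} for this purpose, only for the H\"older bound \eqref{hHolder}, so as written the converse is incomplete.
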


\begin{remark}\label{r:GradProp} Since \cite[Definition 2.16]{CFO2} imposes "global" H\"older continuity for $\nabla^{\varphi}\varphi$, whereas the assumptions in Definition \ref{C1alphaGraphs} are of local nature, the notions cannot be equivalent without some \emph{a priori} assumptions
-- as the compact support of $\varphi$ in Proposition
\ref{equivProp}. We also remark that condition \eqref{CFOdef}
implies that $\nabla^{\varphi}\varphi$ is continuous, as can be
deduced for instance from  formula \eqref{eq:translatedGrad}
below.
\end{remark}

\begin{proof}[Proof of Proposition \ref{equivProp}] Assume that $\varphi \in C^{1,\alpha}_{\He}(\W)$ in the sense of Definition \ref{C1alphaGraphs}.
Then, \cite[Theorem 1.2]{MR2223801} states \textrm{in particular}
that $\varphi$ is intrinsically differentiable, the intrinsic
gradient $\nabla^{\varphi}\varphi(w)$ exists for all $w \in \W$, and $w
\mapsto \nabla^{\varphi}\varphi(w)$ is continuous (see also
\cite[Theorem 4.95]{MR3587666}).

Additionally, \cite[Theorem 1.2]{MR2223801} promises that the
horizontal normal $\nu_{\He}$ in Definition
\ref{C1alphaGraphs}(ii) has the representation
\begin{equation}\label{form32} \nu_{\He}(\Phi(w)) = \left(\frac{-1}{\sqrt{1 + |\nabla^{\varphi}\varphi(w)|^{2}}},\frac{\nabla^{\varphi}\varphi(w)}{\sqrt{1 + |\nabla^{\varphi}\varphi(w)|^{2}}} \right), \qquad w \in \W. \end{equation}
Let us now argue that $\nu_{\He}$, above, is (globally)
$\alpha$-H\"older continuous on $S = \Phi(\W)$. Recall that by the
assumption that $S$ is a $C^{1,\alpha}_{\He}$-surface, and Remark
\ref{normalRemark}, for every $p \in S$ there exists \textbf{some}
$\alpha$-H\"older continuous choice of a horizontal normal
$\nu_{\He}^{p}$, defined in a neighbourhood $S \cap U$ of $p$
(note that there are two horizontal normals at every point in
$S$). However, it is easy to see that $\nu_{\He}^{p}$ must
coincide on $S \cap U$ with either $\nu_{\He}$ or $-\nu_{\He}$
whenever $S \cap U$ is connected (that is, the sign depends only
on $U$).  But since $S$ is locally connected, and since
$\nu_{\He}(p) \equiv (-1,0,\ldots,0)$ outside the compact set
$\Phi(\spt \varphi) \subset S$, one infers that the horizontal
normal $\nu_{\He}$ in \eqref{form32} is $\alpha$-H\"older
continuous on $S$.

Another remark: using again that $\spt \varphi$ is compact,
$\nabla^{\varphi}\varphi$ is continuous and supported in $\spt
\varphi$, we see that $L :=
\|\nabla^{\varphi}\varphi\|_{L^{\infty}(\W)} < \infty$. From
\eqref{form32} and the $\alpha$-H\"older continuity of $\nu_{\He}$
on $S=\Phi(\mathbb{W})$, it can easily be deduced that
\begin{displaymath}
\ell:(S,d_{\mathbb{H}})\to \mathbb{R},\quad p\mapsto\ell(p):=
\sqrt{1+|\nabla^{\varphi}\varphi(\pi_{\mathbb{W}}(p))|^2}
\end{displaymath}
is $\alpha$-H\"older continuous with H\"older constant bounded in
terms of $L$ and the $\alpha$-H\"older constant of
$\nu_{\mathbb{H}}$. Then
\begin{align}\label{eq:HolForm}
\left|\ell(p)\nu_{\mathbb{H}}(p)-\ell(p')\nu_{\mathbb{H}}(p')
\right|&\leq \ell(p)
\left|\nu_{\mathbb{H}}(p)-\nu_{\mathbb{H}}(p') \right|+
\left|{\ell(p)}-{\ell(p')}\right|\lesssim_L d(p,p')^{\alpha}
\end{align}
for $p,p'\in S$. Now we are quite well equipped to check that
$\varphi$ satisfies \eqref{CFOdef}. To this end, fix $w \in \W$ and $p \in \Phi(\W)$. Observe the
explicit formula
\begin{equation}\label{eq:translatedGrad} \nabla^{\varphi^{(p^{-1})}}\varphi^{(p^{-1})}(w) = \nabla^{\varphi}\varphi(\pi_{\W}(p \cdot w)),
\qquad w \in \W, \: p \in \Phi(\W), \end{equation} proven in Lemma \ref{invintrgrad} below. Thus, starting from the left hand
side of \eqref{CFOdef},
\begin{align}
 |\nabla^{\varphi^{(p^{-1})}} \varphi^{(p^{-1})}(w) - \nabla^{\varphi^{(p^{-1})}}\varphi^{(p^{-1})}(0)|
 & = |\nabla^{\varphi}\varphi(\pi_{\W}(p \cdot w)) - \nabla^{\varphi}\varphi(\pi_{\W}(p))| \notag \\
&= |\nabla^{\varphi}\varphi(\pi_{\W}\left(\Phi(\pi_{\mathbb{W}}(p \cdot
w))\right)) - \nabla^{\varphi}\varphi(\pi_{\W}(p))| \notag
\\&\label{form33} \overset{\eqref{eq:HolForm}}{\lesssim}_L d_{\He}(\Phi(\pi_{\W}(p \cdot w)),p)^{\alpha}. \end{align}
The following formula is needed to make sense of the right hand side:
\begin{lemma}\label{l:translated} For any $p \in \He^n$ and $w \in \W$, the following relation holds:
\begin{equation}\label{form36} \Phi(\pi_{\W}(p \cdot w)) = p \cdot \Phi^{(p^{-1})}(w). \end{equation}
Here $\Phi^{(p^{-1})}$ is the graph map of $\varphi^{(p^{-1})}$.
\end{lemma}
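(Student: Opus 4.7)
The plan is to verify identity \eqref{form36} by unravelling both sides from the definitions of the graph map $\Phi$, its translate $\Phi^{(p^{-1})}$, and the vertical projection $\pi_{\W}$. No analytic input is required: the identity is purely algebraic, expressing the compatibility between left translation by $p$ and the splitting $\He^n = \W\cdot\V$.

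First I would exploit the defining property of $\varphi^{(p^{-1})}$, namely that its intrinsic graph equals $p^{-1}\cdot \Phi(\W)$. This gives $p\cdot \Phi^{(p^{-1})}(w)\in \Phi(\W)$, so there is a unique $w' \in \W$ with
\[
p\cdot \Phi^{(p^{-1})}(w) \;=\; \Phi(w') \;=\; w'\cdot \varphi(w').
\]
Applying $\pi_{\W}$ and using that $\pi_{\W}(u\cdot v)=u$ whenever $u\in\W$ and $v\in\V$, one obtains $w' = \pi_{\W}(p\cdot \Phi^{(p^{-1})}(w))$. Therefore \eqref{form36} will follow as soon as I prove
\[
\pi_{\W}(p\cdot \Phi^{(p^{-1})}(w)) \;=\; \pi_{\W}(p\cdot w).
\]

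For this I would write $\Phi^{(p^{-1})}(w) = w\cdot \varphi^{(p^{-1})}(w)$ and decompose $p\cdot w = \pi_{\W}(p\cdot w)\cdot \pi_{\V}(p\cdot w)$ via the splitting $\He^n = \W\cdot \V$. Associativity then gives
\[
p\cdot \Phi^{(p^{-1})}(w) \;=\; \pi_{\W}(p\cdot w)\cdot \bigl(\pi_{\V}(p\cdot w)\cdot \varphi^{(p^{-1})}(w)\bigr).
\]
Since $\V$ is the one-dimensional horizontal subgroup in the $x_1$-direction, it is closed under the group law, so the parenthesised factor lies in $\V$. Hence the displayed expression is already the unique $\W\cdot \V$ decomposition of $p\cdot \Phi^{(p^{-1})}(w)$, and $\pi_{\W}$ reads off $\pi_{\W}(p\cdot w)$, as desired.

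I do not anticipate any serious obstacle here; the lemma is really a bookkeeping statement. The only subtlety worth flagging is that the decomposition $\He^n = \W\cdot\V$ is a semidirect product rather than a direct one, so the order of the factors must be respected throughout. The argument above uses only that $\V$ is a subgroup and that $\pi_{\W}$ is characterised by the relation $p = \pi_{\W}(p)\cdot\pi_{\V}(p)$ with $\pi_{\V}(p)\in\V$.
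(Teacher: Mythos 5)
Your proof is correct, and it takes a slightly different route from the paper's. The paper starts from the explicit formula \eqref{form34}, $\varphi^{(p^{-1})}(w) = \pi_{\V}(p^{-1}) \cdot \varphi(\pi_{\W}(p \cdot w))$ (itself imported from an external reference), substitutes it into both sides of \eqref{form36}, and reduces the claim through a chain of equivalences --- using that $\pi_{\V}$ is a homomorphism vanishing on $\W$ --- down to the defining relation $p \cdot w = \pi_{\W}(p\cdot w)\cdot\pi_{\V}(p\cdot w)$. You instead bypass \eqref{form34} entirely: you use only the defining property of $\varphi^{(p^{-1})}$ (its graph is $p^{-1}\cdot\Phi(\W)$), the fact that $\V$ is a subgroup, and the uniqueness of the splitting $\He^n = \W\cdot\V$, reducing the lemma to the single identity $\pi_{\W}(p\cdot\Phi^{(p^{-1})}(w)) = \pi_{\W}(p\cdot w)$, which you verify by exhibiting the $\W\cdot\V$ decomposition of $p\cdot w\cdot\varphi^{(p^{-1})}(w)$ directly. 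Your argument is marginally more self-contained, since it needs no explicit formula for the translated function; the paper's version has the side benefit of recording \eqref{form34}, which is reused elsewhere (e.g.\ in the proof of Lemma \ref{invintrgrad}). Both are sound.
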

\begin{proof} The function $\varphi^{(p^{-1})} \colon \W \to \V$ is explicitly given by
\begin{equation}\label{form34} \varphi^{(p^{-1})}(w) = \pi_{\V}(p^{-1}) \cdot \varphi(\pi_{\W}(p \cdot w)), \end{equation}
where $\pi_{\V}(x_1,\ldots,x_{2n},t) = x_1$ is the horizontal
projection induced by the splitting $\He^n = \W \cdot \V$, see for
instance \cite[Lemma 4.7]{CFO} (for $n=1$). The map $\pi_{\V}$ is
a group homomorphism $\He^n \to \V$ with $\pi_{\V}(w) = 0$ for all
$w = (0,x_2,\ldots,x_{2n},t) \in \W$. Therefore,
\begin{align*} & \Phi(\pi_{\W}(p \cdot w)) = p \cdot \Phi^{(p^{-1})}(w)\\
& \quad \overset{\eqref{form34}}{\Longleftrightarrow} \pi_{\W}(p \cdot w) \cdot \varphi(\pi_{\W}(p \cdot w)) = p \cdot [w \cdot \pi_{\V}(p^{-1}) \cdot \varphi(\pi_{\W}(p \cdot w))]\\
& \quad \Longleftrightarrow \pi_{\W}(p \cdot w) = p \cdot w \cdot \pi_{\V}(p^{-1}) = p \cdot w \cdot [\pi_{\V}(p)]^{-1} \\
& \quad \Longleftrightarrow p \cdot w = \pi_{\W}(p \cdot w) \cdot
\pi_{\V}(p) = \pi_{\W}(p \cdot w) \cdot \pi_{\V}(p \cdot w).
\end{align*} The last equation is true by the definition of
the projections $\pi_{\W}$ and $\pi_{\V}$, so the proof is
complete. \end{proof}

We deduce that the right hand side of \eqref{form33} equals, up to
a multiplicative constant, the term
$d_{\He}(\Phi^{(p^{-1})}(w),0)^{\alpha}$, which we will now
further bound from above in order to conclude the proof of the
first implication in Proposition \ref{equivProp}. To do so, we
observe that \cite[Theorem 1.2]{MR2223801} implies more than mere
intrinsic differentiability of $\varphi$: it shows that $\varphi$
is \emph{uniformly intrinsically differentiable} in the sense of
\cite[Definition 3.16]{MR2496655}. Recalling that $\varphi$ has
compact support, this implies that there exists a function
$\varepsilon=\varepsilon_{\mathrm{spt}\varphi}:(0,\infty) \to
(0,\infty)$ with $\lim_{s\to 0}\varepsilon(s)=0$ such that
\begin{displaymath}
 |\varphi^{(p^{-1})}(y,t) - \langle \nabla^{\varphi}\varphi(w_0),y\rangle|
\leq\varepsilon\left(\|(y,t)\|\right) \|(y,t)\|, \qquad
p=\Phi(w_0)\in \Phi(\W),\;(y,t) \in
\mathrm{spt}\,\varphi^{(p^{-1})}.
\end{displaymath}
Then there is  a constant $\delta>0$ such that
\begin{displaymath}
|\varphi^{(p^{-1})}(y,t)|\leq |\varphi^{(p^{-1})}(y,t) - \langle
\nabla^{\varphi}\varphi(w_0),y\rangle|+|\langle
\nabla^{\varphi}\varphi(w_0),y\rangle|\leq (1+L)\|(y,t)\|
\end{displaymath}
for all $p=\Phi(w_0)\in \Phi(\W)$ and all $(y,t)\in
\mathrm{spt}\,\varphi^{(p^{-1})}$ with $\|(y,t)\|\leq \delta$,
where $L:=\|\nabla^{\varphi}\varphi\|_{L^{\infty}(\W)}$. On the
other hand, if $\|(y,t)\|\in \mathrm{spt}\,\varphi^{(p^{-1})}$
satisfies $\|(y,t)\|> \delta$, then trivially,
\begin{displaymath}|\varphi^{(p^{-1})}(y,t)| = \|\pi_{\V}(\Phi(w_0)^{-1})\cdot \varphi(\pi_{\W}(\Phi(w_0)\cdot w)\|
\leq \tfrac{2\|\varphi\|_{L^{\infty}(\W)}}{\delta} \|(y,t)\|. \end{displaymath}
In conclusion, there exists a constant $L'\geq 1$ such that for
all $p\in \Phi(\W)$, it holds
\begin{equation}\label{eq:LipConclu}
\|\Phi^{(p^{-1})}(w)\|\leq \|w\|+ |\varphi^{(p^{-1})}(w)|\leq L'
\|w\|,\quad w\in \W.
\end{equation}
Finally, plugging formula \eqref{form36} into \eqref{form33} and
using the left-invariance of $d_{\He}$,
\begin{equation}\label{form35} |\nabla^{\varphi^{(p^{-1})}} \varphi^{(p^{-1})}(w) - \nabla^{\varphi^{(p^{-1})}}\varphi^{(p^{-1})}(0)|
\lesssim d_{\He}(\Phi^{(p^{-1})}(w),0)^{\alpha}
\overset{\eqref{eq:LipConclu}}{\lesssim} \|w\|^{\alpha}.
\end{equation}
Now \eqref{form35} proves that $\varphi$ satisfies \eqref{CFOdef}.

The converse implication stated in Proposition \ref{equivProp} is
not needed in the paper, so we only sketch the argument. Let
$\varphi$ be intrisically differentiable with intrinsic gradient satisfying
\eqref{CFOdef}. Then $\varphi$ is again uniformly intrinsically
differentiable. This is a consequence of Proposition
\ref{p:Approx},
 see also \cite[Remark 2.24]{CFO2}. Therefore, according to \cite[Theorem 1.2]{MR2223801},
  the intrinsic graph $S = \Phi(\W)$ is an $\He$-regular surface, and the condition
  (ii) in Definition \ref{C1alphaGraphs} holds.
   So, it remains to check that the horizontal normal of $S$ is $\alpha$-H\"older continuous.
   This follows from the expression \eqref{form32} (which is available by \cite[Theorem 1.2]{MR2223801})
   using estimates similar to those above \eqref{form39}.
   \end{proof}

  The proof of Proposition \ref{equivProp} referred to
  the following auxiliary lemma.

\begin{lemma}\label{invintrgrad}
Let $\varphi :\W \to \V$ be defined on the codimension-$1$ vertical subgroup
$\mathbb{W}\subset \He^n$. If $p\in \He^n$ and $w\in \W$ are such
that $\varphi$ is intrinsically differentiable at $\pi_{\W}(p\cdot
w)$, then
\begin{equation}\label{eq:LeftInvGrad} \nabla^{\varphi^{(p^{-1})}}\varphi^{(p^{-1})}(w) = \nabla^{\varphi}\varphi(\pi _\W(p\cdot w)). \end{equation}
\end{lemma}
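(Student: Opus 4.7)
The plan is to reduce the identity to the uniqueness of the intrinsic differential applied to a common translated graph. Set $w_0 := \pi_{\W}(p\cdot w) \in \W$, $p_0 := \Phi(w_0) \in \Phi(\W)$, and $q := \Phi^{(p^{-1})}(w)$. By Lemma \ref{l:translated}, in particular \eqref{form36}, we have
\begin{equation*}
p \cdot q = p \cdot \Phi^{(p^{-1})}(w) = \Phi(\pi_{\W}(p\cdot w)) = p_0,
\end{equation*}
hence $q^{-1}\cdot p^{-1} = p_0^{-1}$.

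The first key step is to observe the cocycle identity
\begin{equation*}
(\varphi^{(p^{-1})})^{(q^{-1})} = \varphi^{(p_0^{-1})}.
\end{equation*}
This is a purely geometric statement: the intrinsic graph of the left-hand side is $q^{-1}\cdot (p^{-1}\cdot \Phi(\W)) = (p\cdot q)^{-1}\cdot \Phi(\W) = p_0^{-1}\cdot \Phi(\W)$, which is the intrinsic graph of the right-hand side. Since a function $\W \to \V$ is uniquely determined by its intrinsic graph, the two translated functions coincide. Alternatively, one can verify this directly from the explicit formula \eqref{form34} together with the fact that $\pi_{\V}$ is a group homomorphism vanishing on $\W$.

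The second step is to invoke Definition \ref{d:IntrDiff} at both points. By hypothesis, $\varphi$ is intrinsically differentiable at $w_0$, so by the above identity and the trivial fact that $\pi_{\W}(\Phi(w_0)\cdot w') = \pi_{\W}(p_0 \cdot w')$, one sees that $\varphi^{(p^{-1})}$ is also intrinsically differentiable at $w$, and that $d^{\varphi^{(p^{-1})}}\varphi^{(p^{-1})}(w)$ and $d^{\varphi}\varphi(w_0)$ are both characterised as the unique map $L \colon \W \to \V$ with vertical subgroup graph satisfying
\begin{equation*}
\lim_{\|w'\|\to 0} \frac{\|L(w')^{-1}\cdot \varphi^{(p_0^{-1})}(w')\|}{\|w'\|} = 0.
\end{equation*}
By the uniqueness clause in Definition \ref{d:IntrDiff}, the two intrinsic differentials coincide.

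Finally, I extract the gradients. By Definition \ref{d:IntrGrad}, the intrinsic gradient is the unique vector in $\mathbb{R}^{2n-1}$ representing the intrinsic differential via the pairing $d^{\varphi}\varphi(w_0)(w') = \langle \nabla^{\varphi}\varphi(w_0),\pi(w')\rangle$, and the projection $\pi$ is independent of the function involved. Equality of the two differentials therefore yields
\begin{equation*}
\nabla^{\varphi^{(p^{-1})}}\varphi^{(p^{-1})}(w) = \nabla^{\varphi}\varphi(w_0) = \nabla^{\varphi}\varphi(\pi_{\W}(p\cdot w)),
\end{equation*}
as claimed. The only nontrivial point is the cocycle identity $(\varphi^{(p^{-1})})^{(q^{-1})} = \varphi^{(p_0^{-1})}$, but this is forced by the associativity of left translation on intrinsic graphs and uniqueness of the graphing function; everything else is a direct unwinding of definitions.
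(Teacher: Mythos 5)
Your proof is correct and follows essentially the same route as the paper's: both arguments hinge on the cocycle identity $(\varphi^{(p^{-1})})^{(q^{-1})}=\varphi^{(p_0^{-1})}$ for $q=\Phi^{(p^{-1})}(w)$ and $p_0=p\cdot q=\Phi(\pi_{\W}(p\cdot w))$, and then reduce both gradients to the intrinsic differential of this common translated function at the origin via the uniqueness clause in Definition \ref{d:IntrDiff}. The paper merely packages the last step as the quoted equivalence between differentiability at $w_0$ and differentiability of the translate at $0$, which is the same unwinding of the definition you perform directly.
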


\begin{proof}
By its very definition,  a function $\psi:\mathbb{W}\to
\mathbb{V}$ is intrinsically differentiable at a point $w_0\in
\mathbb{W}$  if and only if $\psi^{(p_0^{-1})}$ for $p_0=w_0\cdot
\psi(w_0)$ is intrinsically differentiable at $0$, and in that
case
\begin{equation}\label{eq:LeftInvGrad}
\nabla^{\psi}\psi(w_0)=
\nabla^{\psi^{(p_0^{-1})}}\psi^{(p_0^{-1})}(0),
\end{equation}
 see, for instance, \cite[Definition 4.71, Proposition 4.76.]{MR3587666} and \cite[top of p.192]{MR2223801}.
Now fix $\varphi$, and points $w \in \W$, and  $p \in \He^n$ as in
the statement of the lemma. We first apply formula
\eqref{eq:LeftInvGrad} to $\psi:=\varphi$ and $w_0:=\pi_{\W}(p
\cdot w)$. Hence,
\begin{displaymath}
p_0 = w_0 \cdot \varphi(w_0) = \pi_{\W}(p \cdot w) \cdot
\varphi(\pi_{\W}(p \cdot w)) = p \cdot w\cdot \pi_{\V}(p^{-1})
\cdot \varphi(\pi_{\W}(p \cdot w))\overset{\eqref{form34}}{=} p
\cdot w\cdot \varphi^{(p^{-1})}(w)
\end{displaymath}
and \eqref{eq:LeftInvGrad} reads
\begin{equation}\label{eq:grad1}
\nabla^{\psi}\psi(\pi _\W(p\cdot w))= \nabla^{\varphi^{([ p \cdot
w\cdot \varphi^{(p^{-1})}(w)]^{-1})}}\varphi^{([ p \cdot w\cdot
\varphi^{(p^{-1})}(w)]^{-1})}(0).
\end{equation}
On the other hand, denoting \begin{displaymath} q_0  = w \cdot
\varphi^{(p^{-1})}(w),
\end{displaymath}
we observe that the graph of $[\varphi^{(p^{-1})}]^{(q_0^{-1})}$
is $q_0^{-1}\cdot \Gamma'$, where $\Gamma'$ is the graph of
$\varphi^{(p^{-1})}$, so
\begin{displaymath}
q_0^{-1}\cdot \Gamma'= q_0^{-1} \cdot p^{-1}\cdot
\Phi(\mathbb{W})=[p\cdot q_0]^{-1} \cdot \Phi(\mathbb{W}).
\end{displaymath}
This shows that
\begin{displaymath}
\varphi^{([p\cdot q_0]^{-1})}= [\varphi^{(p^{-1})}]^{(q_0^{-1})}
\end{displaymath}
and hence
\begin{displaymath}
\eqref{eq:grad1}= \nabla^{\varphi^{([p\cdot
q_0]^{-1})}}\varphi^{([p\cdot q_0]^{-1})}(0)=
\nabla^{[\varphi^{(p^{-1})}]^{(q_0^{-1})}}[\varphi^{(p^{-1})}]^{(q_0^{-1})}(0).
\end{displaymath}
In particular, $[\varphi^{(p^{-1})}]^{(q_0^{-1})}$ is
intrinsically differentiable at $0$. Formula
\eqref{eq:LeftInvGrad} applied to $\psi:=\varphi^{(p^{-1})}$,
$p_0=q_0$ and $w_0:=w$ yields
\begin{equation}\label{eq:grad2}
\nabla^{\varphi^{(p^{-1})}}\varphi^{(p^{-1})}(w) =
\nabla^{[\varphi^{(p^{-1})}]^{(q_0^{-1})}}[\varphi^{(p^{-1})}]^{(q_0^{-1})}(0). \end{equation}
The lemma follows by observing that the right hand sides of
\eqref{eq:grad1} and \eqref{eq:grad2} are equal.
\end{proof}

With the main definitions now in place, we repeat Theorem
\ref{mainIntro} below.

\begin{thm}\label{mainGraphs} Let $S = \Phi(\W) \subset \He^n$, where $\varphi \in C^{1,\alpha}_{\He}(\W)$ is compactly supported.
Then $S$ has BP$G$BI in the sense of Definition \ref{BPdef}.
\end{thm}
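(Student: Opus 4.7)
The plan is to derive Theorem \ref{mainGraphs} from the abstract criterion Theorem \ref{main}, applied with $(M,d_{M}) = (S,d_{\He}|_{S})$ and $(G,d_{G})$ as in the statement. The group $G$ carries its left-invariant Haar measure $\mu$, which is doubling. A standard rescaling argument using Heisenberg dilations $\delta_{1/r}$ and the left-invariance of $d_{\He}$ reduces BP$G$BI at base point $p_{0} \in S$ and scale $r \leq \diam_{\He}(S)$ to producing, at unit scale, a bilipschitz embedding of a positive $\mu$-measure compact subset $K \subset B_{G}(0,1)$ into $B_{\He}(p_{0},1) \cap S$. This is exactly the output of Theorem \ref{main} once \eqref{ISO}-\eqref{comp} are verified.

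The key tool is tangent-plane approximation. For every $p \in S$, the codimension-$1$ vertical subgroup $\W(p) := \nu_{\He}(p)^{\perp}$ is isometrically isomorphic to $G$ via a map $\iota_{p} \colon G \to \W(p)$, and $p \mapsto \iota_{p}$ can be chosen to depend $\alpha$-H\"older continuously on $p$ since $\nu_{\He}$ is $\alpha$-H\"older continuous. My candidate correspondence is
\begin{equation*}
i^{n}_{x \to p}(y) := \Psi_{p}\bigl(p \cdot \iota_{p}(x^{-1} \cdot_{G} y)\bigr), \qquad y \in G,
\end{equation*}
where $\Psi_{p}$ is a local projection of the affine tangent plane $p \cdot \W(p)$ onto $S$, defined in a neighbourhood of $p$ by flowing along the $\nu_{\He}(p)$-direction (feasible by the cone condition for the iLG $S$, and satisfying $\Psi_{p}(p) = p$). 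The map $y \mapsto p \cdot \iota_{p}(x^{-1} \cdot_{G} y)$ is itself an isometry of $G$ onto $p \cdot \W(p)$, and $\Psi_{p}$ is a local perturbation of the identity on this plane.

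To verify \eqref{ISO}, the H\"older estimate \eqref{hHolder}, restated in graph language as \eqref{CFOdef} via Proposition \ref{equivProp}, combined with the uniform intrinsic differentiability encoded in Proposition \ref{p:Approx}, shows that $S$ lies within Kor\'anyi distance $O(\rho^{1+\alpha})$ of $p \cdot \W(p)$ inside $B_{\He}(p,\rho)$. Thus $\Psi_{p}$ moves points by at most $O(2^{-n(1+\alpha)})$ on the scale-$2^{-n}$ ball around $p$, which yields \eqref{ISO} with $L = 1$ and additive error $A\, 2^{-n(1+\alpha)}$. For \eqref{comp}, comparing $i^{n}_{x \to p}$ and $i^{n+1}_{y \to q}$ on $B(x,2^{-n})$ reduces to the comparison of the affine planes $p \cdot \W(p)$ and $q \cdot \W(q)$: when $d_{\He}(p,q) \lesssim 2^{-n}$, the angle between $\nu_{\He}(p)$ and $\nu_{\He}(q)$ is $O(2^{-n\alpha})$, and multiplying by the $2^{-n}$ radius of the ball gives exactly the required displacement $O(2^{-n(1+\alpha)})$.

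The main obstacle I expect is the careful construction of the projection $\Psi_{p}$ and the verification that its distortion and its $p$-dependence satisfy the sharp H\"older bound stated above; in particular, one must ensure that the Kor\'anyi errors compound cleanly. Concretely, I would write $S$ \emph{locally} near $p$ as an intrinsic graph over $\W(p)$, possible by the implicit function theorem of Franchi--Serapioni--Serra Cassano \cite{FSSC} since $\nu_{\He}$ is continuous, and then bound $\Psi_{p}(q) - q$ via the quantitative tangent approximation of $C^{1,\alpha}_{\He}$-graphs provided by Proposition \ref{p:Approx}. Once \eqref{ISO}-\eqref{comp} are in force, Theorem \ref{main} yields a $2L$-bilipschitz $F \colon K \to S$ with $\mu(K) \gtrsim 1$, and since left translation on $\He^{n}$ and $\iota_{p_{0}}$ preserve the Hausdorff $(2n+1)$-measure, the bilipschitz property of $F$ translates this into $\calH^{2n+1}(F(K)) \gtrsim 1$. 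Undoing the dilation rescaling then gives the required BP$G$BI bound $\calH^{2n+1}(F(K) \cap [S \cap B_{\He}(p_{0},r)]) \gtrsim r^{2n+1}$, completing the proof.
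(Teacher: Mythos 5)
Your overall strategy is the paper's: reduce to unit scale by Heisenberg dilations, apply Theorem \ref{main} with $(M,d_M)=(S,d_{\He})$ and $(G,d_G)$ the model vertical subgroup, define $i_{x\to p}$ by mapping $G$ onto the affine tangent plane $p\cdot\W_p$ and then projecting onto $S$, and control the projection error by the $O(\rho^{1+\alpha})$ tangent approximation of Propositions \ref{p:Approx}--\ref{approxProp}. That part is sound.

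There is, however, one genuine quantitative gap in your verification of \eqref{comp}, precisely at the step you describe as: "the angle between $\nu_{\He}(p)$ and $\nu_{\He}(q)$ is $O(2^{-n\alpha})$, and multiplying by the $2^{-n}$ radius of the ball gives exactly the required displacement $O(2^{-n(1+\alpha)})$." This is the Euclidean heuristic, and it fails in the Kor\'anyi metric. When you compare two parametrizations of vertical planes through nearby points whose normals differ by $\varepsilon=2^{-n\alpha}$, on a ball of radius $R=2^{-n}$, the discrepancy has a horizontal part of size $\varepsilon R$ \emph{and} a vertical ($t$-coordinate) part of size $\sim\varepsilon R^{2}$, whose Kor\'anyi size is $\sqrt{\varepsilon}\,R=2^{-n(1+\alpha/2)}$, not $2^{-n(1+\alpha)}$. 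This loss appears twice in the paper's argument: once in Lemma \ref{l:Psi_A}(3), where the dependence of the plane parametrization on the normal direction is only $|D-D'|^{1/2}\|w\|$, and once in the conjugation estimate \eqref{eq:FundCommRel}, where moving the base point by $\|\mathfrak{a}\|\lesssim\|w\|^{1+\alpha}$ along a plane out to distance $\|\mathfrak{b}\|\sim d_G(w,v)$ costs an extra $\sqrt{\|\mathfrak{a}\|\|\mathfrak{b}\|}$. The paper explicitly notes that the $C^{1,\alpha}_{\He}$ hypothesis only yields \eqref{comp} with exponent $\alpha/2$. Your proposal also glosses over the second source of loss entirely: comparing $i^{n}_{x\to p}$ with $i^{n+1}_{y\to q}$ does not reduce only to comparing the plane directions, but also requires comparing the base points $p\cdot\Tan^{0}_{p}(w)$ and $q$, which is where the commutator term enters.

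The good news is that the gap is repairable without changing your architecture: Theorem \ref{main} only requires \emph{some} positive H\"older exponent in \eqref{ISO}--\eqref{comp}, so running your argument with the honest exponent $\alpha/2$ (and correspondingly $\epsilon=\alpha/4$ in the Cantor set construction) still closes the proof. But as written, the claimed $O(2^{-n(1+\alpha)})$ bound in \eqref{comp} is not justified and is in fact false in general. A secondary, more minor point: you assert that the isometric identifications $\iota_p\colon G\to\W(p)$ can be chosen to depend $\alpha$-H\"older continuously on $p$ and that \eqref{ISO} holds with $L=1$; the paper deliberately settles for bilipschitz (rather than isometric) identifications via Lemma \ref{l:Psi_A} because controlling the $p$-dependence is the delicate part, and in any case the dependence is again only H\"older with the square-root loss described above.
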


Before proving this, let us deduce the qualitative corollary,
Theorem \ref{mainQualitative}:

\begin{thm}\label{surfaces} Let $S \subset \He^n$ be a $C^{1,\alpha}_{\He}$-surface. Then
$\mathcal{H}^{2n+1}$ almost all of $S$ can be covered by
bilipschitz images of closed subsets of codimension-1 vertical
subgroups. In particular, $S$ is LI rectifiable. \end{thm}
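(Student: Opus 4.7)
The plan is to reduce to the compactly supported intrinsic graph case covered by Theorem \ref{mainGraphs}, and then iterate the big-pieces property to conclude full LI rectifiability. First, I would invoke the extension result announced in the introduction and proved in Appendix \ref{extensionAppendix} (Proposition \ref{appProp}): for every $p \in S$, there exists an open neighbourhood $U_p \subset \He^n$ such that $S \cap U_p$ is contained in the intrinsic graph $\Phi_p(\W_p)$ of a globally defined, compactly supported intrinsic $C^{1,\alpha/3}$-function $\varphi_p \colon \W_p \to \V_p$ over a codimension-$1$ vertical subgroup $\W_p$. Since $(\He^n,d_{\He})$ is separable, $S$ is Lindel\"of, so the cover $\{U_p\}_{p \in S}$ admits a countable subcover $\{U_i\}_{i \in \N}$. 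Writing $S_i := \Phi_i(\W_i)$, it suffices to show that each $S_i$ is LI rectifiable, because then $S \subset \bigcup_i S_i$ is covered, up to an $\mathcal{H}^{2n+1}$-null set, by countably many bilipschitz images of closed subsets of codimension-$1$ vertical subgroups.

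Next, I would apply Theorem \ref{mainGraphs} to each $S_i$, which is legitimate because $\varphi_i \in C^{1,\alpha/3}_{\He}(\W_i)$ is compactly supported. The conclusion supplies uniform constants $L \geq 1$ and $\theta > 0$ such that $S_i$ has BP$G$BI. Moreover $S_i$, being the intrinsic graph of a (globally defined, compactly supported) intrinsic Lipschitz function, is closed in $\He^n$ and Ahlfors $(2n+1)$-regular with some constant $C$ (a standard consequence of the cone condition), so the Lebesgue density theorem is available on $(S_i,d_{\He},\mathcal{H}^{2n+1}|_{S_i})$.

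The final step is a standard exhaustion in the spirit of David and Semmes \cite{DS1}. I claim that every $\mathcal{H}^{2n+1}$-measurable subset $E' \subset S_i$ with $\mathcal{H}^{2n+1}(E')>0$ contains a piece of positive $\mathcal{H}^{2n+1}$-measure that lies in a single bilipschitz image of a compact subset of $G$. Indeed, choosing a Lebesgue density point $p \in E'$ and $r>0$ so small that
\begin{displaymath}
\mathcal{H}^{2n+1}(E' \cap B(p,r)) \geq \mathcal{H}^{2n+1}(S_i \cap B(p,r)) - \tfrac{\theta}{2} r^{2n+1},
\end{displaymath}
and combining this with the BP$G$BI estimate $\mathcal{H}^{2n+1}(f(K)\cap S_i \cap B(p,r)) \geq \theta r^{2n+1}$ for some $L$-bilipschitz map $f \colon K \to \He^n$ with $K \subset G$ compact, inclusion-exclusion forces $\mathcal{H}^{2n+1}(f(K) \cap E') \geq \tfrac{\theta}{2} r^{2n+1} > 0$. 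A routine maximal / countable-exhaustion argument based on this claim now yields countably many bilipschitz maps $\{f_j \colon K_j \to \He^n\}_{j \in \N}$ whose images cover $S_i$ modulo an $\mathcal{H}^{2n+1}$-null set; taking the union of these families over $i \in \N$ proves the theorem.

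The main obstacle is not in the exhaustion, which is routine once BP$G$BI is in hand, but in the extension step: producing a globally defined, compactly supported intrinsic $C^{1,\alpha/3}$-function whose graph contains a prescribed neighbourhood of $p$ on $S$. This is precisely the content of Proposition \ref{appProp}, and the mild loss of H\"older exponent from $\alpha$ to $\alpha/3$ is harmless because Theorem \ref{mainGraphs} applies to every positive H\"older exponent.
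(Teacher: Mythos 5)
Your proposal is correct and follows essentially the same route as the paper: reduce via Proposition \ref{appProp} to globally defined, compactly supported intrinsic $C^{1,\alpha/3}$-graphs, apply Theorem \ref{mainGraphs}, and conclude. The only difference is that you spell out the standard density-point exhaustion showing that BP$G$BI implies LI rectifiability, which the paper leaves implicit.
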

\begin{proof} There are (at least) two possible approaches. One is to use the implicit function theorem \cite[Theorem 6.5]{FSSC}
to express the surface $S$ locally as the intrinsic graph of a \emph{locally defined} intrinsic $C^{1,\alpha}$-function.
 This function does not, literally, satisfy the assumptions of Theorem \ref{mainGraphs}, but the proof of Theorem \ref{mainGraphs} could be localised with some effort.

The biggest difficulty in this approach is of expository nature as
localising the proof of Theorem \ref{mainGraphs} would lead to a
more cumbersome version of
 Proposition \ref{p:Approx} below. So we take the following alternative route: in Appendix
\ref{extensionAppendix}, we show that every point on $S$ has a
neighbourhood which can be contained on the intrinsic graph
$\Gamma$ of a globally defined, compactly supported intrinsic
$C^{1,\alpha/3}$-function. Since $\Gamma$ is LI rectifiable by
Theorem \ref{mainGraphs}, the same is also true for $S$.
\end{proof}

It remains to prove Theorem \ref{mainGraphs}. Recall the notation
$\V$ and $\W$ from Definition \ref{C1alphaGraphs}. We will
frequently abbreviate $(0,x_2,\ldots,x_{2n},t) =:
(x_2,\ldots,x_{2n},t)=:(y,t)$ for points in $\W$, and continue to
use the notation
\begin{displaymath}
\langle \nabla^{\varphi}\varphi(w),y\rangle =\sum_{i=2}^{2n}
D_i^{\varphi} \varphi(w) x_i
\end{displaymath}
for an intrinsically differentiable $\varphi$. The functions
$D_i^{\varphi} \varphi$ have been introduced as the components of
the intrinsic gradient $\nabla^{\varphi}\varphi$ in Definition
\ref{d:IntrGrad}, but under additional regularity assumptions on
$\varphi$, they can also be obtained as derivatives of $\varphi$
in the direction of the vector fields
\begin{displaymath}
D^{\varphi}_j: = X_j,\quad j\in \{2,\ldots,2n\}\setminus
\{n+1\}\quad\text{and}\quad D^\varphi _{n+1}: = \partial_{x_{n+1}}
+ \varphi
\partial_t.
\end{displaymath} A first result of this type is \cite[Proposition
3.7]{MR2223801}, and more specific statements will follow shortly
in the proof of the next result, which is a key ingredient in the
proof of Theorem \ref{mainGraphs}.
\begin{proposition}\label{p:Approx} Assume that $\varphi:\W \to \V$
is intrinsically differentiable on $\W$ and it has a continuous
intrinsic gradient which satisfies \eqref{CFOdef} with constant
$H\geq 1$ and $0<\alpha\leq 1$. Suppose further that
$L:=\|\nabla^{\varphi}\varphi\|_{L^{\infty}(\W)}<\infty$, and $p =
\Phi(w)$ for some $w \in \W$. Then,
\begin{equation}\label{CFOEstimate} |\varphi^{(p^{-1})}(y,t) - \langle \nabla^{\varphi}\varphi(w),y\rangle|
\lesssim \|(y,t)\|^{1 + \alpha}, \qquad (y,t) \in \W.
\end{equation} The implicit constant in \eqref{CFOEstimate} only
depends on $H$  and $L$.
\end{proposition}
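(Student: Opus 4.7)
The first step is to reduce, via Lemma \ref{invintrgrad}, to the case $p$ is the origin. Setting $\psi := \varphi^{(p^{-1})}$ and using that $p = \Phi(w) = w \cdot \varphi(w)$ together with \eqref{form34}, a short computation gives $\psi(0) = \pi_{\V}(p^{-1}) \cdot \varphi(\pi_{\W}(p)) = \varphi(w)^{-1}\cdot \varphi(w) = 0$; Lemma \ref{invintrgrad} gives $\nabla^{\psi}\psi(0) = \nabla^{\varphi}\varphi(w) =: a$ and $\|\nabla^{\psi}\psi\|_{L^{\infty}(\W)} \leq L$; and the hypothesis \eqref{CFOdef} applied with $p$ fixed reads
\begin{displaymath}
|\nabla^{\psi}\psi(w') - a| \leq H\|w'\|^{\alpha}, \qquad w' \in \W.
\end{displaymath}
It therefore suffices to prove $|\psi(y, t) - \langle a, y\rangle| \lesssim_{H,L} \|(y, t)\|^{1+\alpha}$ for every $(y, t) \in \W$.

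The second step is to integrate the intrinsic gradient along an integral curve. Let $\gamma \colon [0,1]\to\W$, $\gamma(s) = (\xi(s), \tau(s))$, be the integral curve of the intrinsic vector field $\sum_{j=2}^{2n} x_j D_j^{\psi}$ starting at $0$. The explicit form of $D_j^{\psi}$ recalled above the proposition forces $\xi(s) = sy$; moreover, the Heisenberg cross-terms contributed by the $D_j^{\psi}$ with $j \neq n+1$ cancel pairwise under the inner sum, leaving the scalar ODE
\begin{displaymath}
\dot{\tau}(s) = x_{n+1}\,\psi(sy, \tau(s)), \qquad \tau(0) = 0.
\end{displaymath}
Using that $\psi$ is intrinsically Lipschitz with constant $\lesssim L$ (a standard consequence of $\|\nabla^{\psi}\psi\|_{\infty} \leq L$, continuity of the intrinsic gradient, and $\psi(0) = 0$, cf.\ \cite{MR2223801}), one gets $|\tau(1)| = |t_{*}| \lesssim_{L} |y|^{2}$ and $\sup_{s}\|\gamma(s)\| \lesssim_{L} |y|$. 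Since $\psi$ is a broad solution of the $D_j^{\psi}$-equations (cf.\ \cite[Proposition 3.7]{MR2223801}), $\psi \circ \gamma$ is absolutely continuous and
\begin{displaymath}
\psi(y, t_{*}) = \int_{0}^{1} \langle\nabla^{\psi}\psi(\gamma(s)), y\rangle\,ds.
\end{displaymath}
Subtracting $\langle a, y\rangle$ and invoking the Hölder bound on $\nabla^{\psi}\psi$ gives an error of size at most $H|y|\sup_{s}\|\gamma(s)\|^{\alpha} \lesssim_{H,L} |y|^{1+\alpha}$, which is bounded by $\|(y,t)\|^{1+\alpha}$.

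The main obstacle is the vertical correction $|\psi(y, t) - \psi(y, t_{*})|$. Since $|t - t_{*}| \leq |t| + |t_{*}| \lesssim_{L} \|(y,t)\|^{2}$, it suffices to establish Euclidean $(1+\alpha)/2$-Hölder continuity of $\psi$ in the $t$-direction,
\begin{displaymath}
|\psi(y, t) - \psi(y, t')| \lesssim_{H,L} |t - t'|^{(1+\alpha)/2}, \qquad y \in \R^{2n-1},\; t, t' \in \R,
\end{displaymath}
with constants depending only on $H$ and $L$. In $\He^{n}$ with $n\geq 2$, this can be extracted from the commutator identity $[D_{i}^{\psi}, D_{n+i}^{\psi}] = T$, valid for $i \in \{2,\ldots,n\}$: executing a parallelogram of horizontal excursions of side $\sim |t - t'|^{1/2}$ along $D_{i}^{\psi}$ and $D_{n+i}^{\psi}$ produces a pure $t$-displacement of size $|t-t'|$, and the ``closing error'' can be bounded by the $\alpha$-Hölder modulus of $\nabla^{\psi}\psi$ together with the horizontal first-order estimate already established. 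In $\He^{1}$ no such commutator is available; one instead iterates the horizontal integration at dyadically shrinking scales $2^{-k}|y|$, arranging the scales so that the vertical drifts accumulated along the integral curves of $D_{2}^{\psi}$ add up to the target displacement, following the scheme of \cite[Proposition 4.2]{CFO2}. Combining the horizontal and vertical estimates by the triangle inequality concludes the proof.
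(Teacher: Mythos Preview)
Your argument is correct in outline and closely related to the paper's, but the organization differs. After the same reduction to $p=0$, the paper constructs a \emph{single} piecewise curve from $0$ directly to the target $(y,t)$: first an integral curve $\gamma_1$ of $D_{n+1}^{\varphi}$ alone (reaching a point $a$ with the correct $x_{n+1}$-coordinate), then a concatenation $\gamma_2\star\cdots\star\gamma_{2n+3}$ of integral curves of the \emph{linear} fields $X_j$, $j\neq 1,n+1$, using the commutator $[X_2,X_{n+2}]=T$ inside this concatenation to land exactly on $(y,t)$. You instead flow along the combined field $\sum_j x_j D_j^{\psi}$ to a point $(y,t_\ast)$ and then treat the residual vertical jump $t-t_\ast$ separately via a $(1+\alpha)/2$-H\"older estimate in $t$. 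Both routes ultimately use the same commutator for $n\geq 2$ and both defer to \cite{CFO2} for $n=1$; your version has the merit of isolating the vertical H\"older regularity as a standalone statement (compare Definition~\ref{d:ExtraVerticalHolder}), while the paper's keeps the nonlinear field $D_{n+1}^{\varphi}$ and the linear $X_j$ cleanly separated.

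Two points deserve care. First, your chain rule $\psi(y,t_\ast)=\int_0^1\langle\nabla^{\psi}\psi(\gamma(s)),y\rangle\,ds$ is along an integral curve of a \emph{sum} including the nonlinear $D_{n+1}^{\psi}$; the reference \cite[Proposition~3.7]{MR2223801} and the paper's \cite[Lemma~4.4]{CFO2} establish this only for integral curves of the individual $D_j^{\psi}$, so the combined version needs a word of justification (it is true, via uniform intrinsic differentiability). Second, your appeal to intrinsic Lipschitz continuity of $\psi$ to obtain $|t_\ast|\lesssim_L|y|^2$ is an unnecessary detour and, read strictly against the proposition's hypotheses, potentially circular (in this paper intrinsic Lipschitz is derived \emph{from} the proposition, see Remark~\ref{r:constants}). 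The bound follows directly: the chain rule gives $|(\psi\circ\gamma)'(s)|\leq L|y|$, hence $|\psi(\gamma(s))|\leq L|y|s$, hence $|\dot\tau(s)|=|x_{n+1}||\psi(\gamma(s))|\leq L|y|^2 s$ and $|\tau(1)|\leq \tfrac{L}{2}|y|^2$.
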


The proposition says, in a "left-invariant" way, that $\varphi$ is
locally well-approximated by linear functions. The main corollary
is Proposition \ref{approxProp}, which quantifies how well the
intrinsic graph of $\varphi$ around $\Phi(w)$ is approximated by
the \emph{vertical tangent plane} determined by
$\nabla^{\varphi}\varphi(w)$.

\begin{proof}[Proof of Proposition \ref{p:Approx}]
For $n=1$ the proposition was established in \cite[Proposition
2.23]{CFO2}. The case $n>1$ can be proven in a simpler way without
the arguments that were used in \cite[Proposition 4.2]{CFO2}. We
include here a self-contained proof for that case. By the
definition of the intrinsic differentiability and intrinsic
gradients, we have
\begin{equation*}
|\varphi ^{(p^{-1})}(y,t) -
\langle\nabla^{\varphi}\varphi(w),y\rangle | = |\varphi
^{(p^{-1})} (y,t) - \langle \nabla ^{\varphi ^{(p^{-1})}} \varphi
^{(p^{-1})}(0) , y\rangle |
\end{equation*}
and so we just prove that
\begin{equation}\label{finale1}
 |\varphi  (y,t) -   \langle \nabla ^{\varphi } \varphi (0), y \rangle| \lesssim \| (y,t)\|^{\alpha + 1}, \quad \mbox{ for all } (y,t) \in \W,
\end{equation}
under the assumption that $p=0$ and $\varphi (0)=0$. Notice that
the constants $L$ and $H$ are not changed under left translations.

To explain the idea, let us first consider a $C^{1,\alpha}(\R)$
function $h:\R \to \R$ with $h(0)=0$. Then, for $y>0$,
\begin{displaymath}
|h(y)-h'(0)y|=\left|\int_0^y h'(s)-h'(0)\,ds\right|\leq \int_0^y
|h'(s)-h'(0)|\,ds \lesssim |y|^{1+\alpha}.
\end{displaymath}
To prove \eqref{finale1}, we apply the same idea, but we integrate
along integral curves of vector fields $D_j^{\varphi}$,
$j=2,\ldots,2n$. We use the assumption $n>1$ to ensure that the
origin can be connected to any point
\begin{displaymath}
(y,t)=(0,x_2,\ldots,x_{2n},t)\in \W \end{displaymath} by a curve
 $\gamma :I \to \W$ that is defined as a
concatenation $\gamma := \gamma_1 \star \cdots \star \gamma
_{2n+3} $ of the following curves:
\begin{itemize}
\item $\gamma_1$ is an integral curve of
\begin{displaymath}
D^\varphi _{n+1} = \partial_{x_{n+1}} + \varphi \partial_t
\end{displaymath}
that connects $0$ to a point of the form
\begin{displaymath}a:= (0,\ldots , 0 ,x_{n+1},0,\ldots,0, \tau
(x_{n+1})).\end{displaymath} \item $ \gamma _2 \star \dots \star
\gamma _{2n+3}$ is a concatenation of integral curves of
\begin{displaymath}
D^{\varphi}_j = X_j,\quad j\in \{2,\ldots,2n\}\setminus \{n+1\}
\end{displaymath}
with the property that
$$\mbox{length}_{\mathbb{H}^n} ( \gamma _2 \star \dots  \star
\gamma _{2n+3}) \lesssim \|(y,t)\|$$ and  $ \gamma _2 \star \dots
\star \gamma _{2n+3}$ connects $a$ to $(y,t)$.
\end{itemize}
A similar construction was used in \cite[Proposition
6.10]{antonelli2019pauls}. We now explain in detail how
$\gamma_1,\ldots,\gamma_{2n+3}$ are defined. First, let
$\lambda_{n+1}$ be an integral curve of $D_{n+1}^{\varphi}$, given
by
\begin{equation}\label{eq:lambda_n+1}
\lambda_{n+1}(s):= (0,\ldots 0,s,0,\ldots,\tau (s)),
\end{equation} where $s$ is the component corresponding to the
coordinate $x_{n+1}$, and $\tau: J\to \R$ is a solution of the
Cauchy problem
 \begin{equation*}
\left\{
\begin{array}{lcl}
\tau '(s)= \varphi(0, \ldots , 0 , s, 0,\ldots,0,\tau (s)),  \\
\tau (0)=0. \\
\end{array}
\right.
\end{equation*}
Such a solution exists by Peano's theorem since $\varphi$ is
continuous, and we assume that $J$ is the maximal interval of
existence for $\tau$ containing the point $0$. As moreover
$\nabla^{\varphi}\varphi$ is continuous, which follows from the
assumption \eqref{CFOdef} by Remark \ref{r:GradProp}, we find by
\cite[Lemma 4.4]{CFO2} that
\begin{equation}\label{(6.31).2:_n+1}
\begin{aligned}
 (\varphi \circ \lambda _{n+1})' (s) &= D^\varphi_{n+1}\varphi (\lambda _{n+1}(s)),\quad s\in J. \\
\end{aligned}
\end{equation}
To be precise, \cite[Lemma 4.4]{CFO2} is stated in $\mathbb{H}^1$,
but since $\lambda_{n+1}$ is entirely contained in the
$x_{n+1}t$-plane, we can apply the result to
\begin{displaymath}
(x_{n+1},t)\mapsto \varphi(0,\ldots,0,x_{n+1},0,\ldots,0,t),
\end{displaymath}
interpreted as a function on a vertical subgroup in
$\mathbb{H}^1$. Moreover, using the same proof as for
\cite[(4.4)]{CFO2}, one can show that in fact $J=\R$, and
\begin{equation}\label{(6.31)}
|\tau (s) | \lesssim _L |s|^2, \qquad s\in \R.
\end{equation}

The curve $\gamma_1$ will be an appropriately parametrized
subcurve of $\lambda_{n+1}$. If $x_{n+1}>0$,  we naturally define
$\gamma_1$ to be the restriction of $\lambda_{n+1}$ to the
interval $[0,x_{n+1}]$. If $x_{n+1}<0$, we have to reverse the
order of the parametrization, so we make the general definition:
$\gamma_1(s):= \lambda_{n+1}(\mathrm{sgn}(x_{n+1})s)$, $s\in
[0,|x_{n+1}|]$, if $x_{n+1}\neq 0$, and we let $\gamma_1$ be the
constant curve $\gamma_1\equiv 0$ otherwise. Note that the points
$a:=\gamma_1(|x_{n+1}|)$ and $(y,t)$ belong to
\begin{displaymath}
\mathbb{G}:=\{(z_1,\ldots,z_{2n+1})\in \mathbb{R}^{2n+1}:\,
z_1=0\quad\text{and}\quad z_{n+1}=x_{n+1}\},
\end{displaymath}
and $\mathbb{G}$ with the group law and metric induced from
$\mathbb{H}^n$ is isometrically isomorphic to $\mathbb{H}^{n-1}$
with $d_{\mathbb{H}}$. We next show that there exists a compact
interval $I=[|x_{n+1}|,b]$ such that $a$ and $(y,t)$ can be
connected by a concatenation $\gamma _2 \star \dots \star \gamma
_{2n+3}:I\to \mathbb{G}$ of integral curves of $D^\varphi _j=X_j$
, $j \in \{2,\dots, 2n \}\setminus\{n+1\}$ so that
\begin{equation}\label{equation1803}
\max_{s\in I} \|(\gamma _2 \star \dots  \star \gamma _{2n+3}) (s)
\| \lesssim_L \|(y,t)\| \mbox{\,\, and \,\, length}_{\mathbb{H}^n}
( \gamma _2 \star \dots  \star \gamma _{2n+3}) \lesssim
d_{\mathbb{H}}(a,(y,t)).
\end{equation}
The first inequality in \eqref{equation1803} follows from the
second one since
\begin{equation}\label{eq:a_2}
\|a\|\overset{\eqref{(6.31)}}{\lesssim_L } |x_{n+1}|\quad
\text{and}\quad  |x_{n+1}|\leq\|(y,t)\|
\end{equation}
and
\begin{align*}\|(\gamma _2 \star \dots  \star \gamma
_{2n+3}) (s) \|  &\leq  \mathrm{length}_{\mathbb{H}^n} ( \gamma _2
\star \dots \star \gamma _{2n+3})+\|a\|,\quad s\in I.
\end{align*}
Thus it suffices to find curves which satisfy the second condition
in \eqref{equation1803}. To achieve this, first concatenate curves
$\gamma_2,\ldots,\gamma_{2n-1}$ in the following way: follow the
unique integral curve of $D^\varphi _2 =X_2$ starting at $a$ for
time $x_2$, then follow the integral curve of $D^\varphi _{j}
=X_{j}$ for time $x_{j}$, etc.\ for $j=3,\ldots,n,n+2,\ldots,2n$,
until you reach $a' =(y,\tau (x_{n+1}) + \frac 1 2 \sum_{i=2}^n
x_{i}x_{n+i})$. Then connect $a'$ to $(y,t)$ by a curve $
\gamma_{2n}\star \dots \star \gamma _{2n+3}$ with
\begin{displaymath}
\mathrm{length}_{\mathbb{H}^n}(\gamma_{2n}\star \dots \star \gamma
_{2n+3}) \lesssim \left|t-\left(\tau (x_{n+1}) + \tfrac{ 1}{ 2}
\sum_{i=2}^n x_{i}x_{n+i}\right)\right|^{1/2}.
\end{displaymath}
This is possible since $[X_2,X_{n+2}]=\partial_t$. Now $\gamma _2
\star \dots \star \gamma _{2n+3}$ connects $a$ to $(y,t)$ as
desired, and
\begin{equation}\label{nuvola}
\begin{aligned}
 \mbox{length}_{\mathbb{H}^n} ( \gamma_{2} \star \dots   \star \gamma _{2n+3})&  \lesssim |x_2|+\cdots+|x_{2n}|+
 \left|t-\left(\tau (x_{n+1}) + \tfrac{ 1 }{2}
\sum_{i=2}^n x_{i}x_{n+i}\right)\right|^{1/2}\\
& \lesssim d_{\mathbb{H}}(a,(y,t)) + |\tau (x_{n+1}) -t|^{1/2}\\
 & \lesssim d_{\mathbb{H}}(a,(y,t)).\\
 \end{aligned}
\end{equation}
Hence we have found curves $\gamma_2,\ldots,\gamma_{2n+3}$ so that
the conditions in \eqref{equation1803} are satisfied.

Now we are ready to implement the idea explained at the beginning
of the proposition. Namely, we will write $\varphi(y,t)$ as an
integral of $(\varphi \circ \gamma)'$, where $\gamma$ is the
concatenation of the curves $\gamma_1,\ldots,\gamma_{2n+3}$. To
relate this to the intrinsic gradient, we apply again the
intrinsic differentiability of $\varphi$, and observe that
 \begin{equation}\label{(6.31).2}
\begin{aligned}
 (\varphi \circ \lambda _j)' (s) &= D^\varphi_j\varphi (\lambda _j(s)),\\
\end{aligned}
\end{equation}
for all $j=2,\ldots,2n$, where $\lambda_{n+1}$ is as in
\eqref{eq:lambda_n+1}, and $\lambda_j$ for $j\neq n+1$ is an
arbitrary integral curve of $D_j^{\varphi}=X_j$. For $j=n+1$, we
saw \eqref{(6.31).2} already in \eqref{(6.31).2:_n+1}. For $j\neq
n+1$, the vector field $ D^\varphi_j$ is linear and independent of
$\varphi$, and \eqref{(6.31).2} follows directly from the
intrinsic differentiability assumption by the argument given at
the beginning of the proof of \cite[Proposition 3.7]{MR2223801}.
Hence, if $\gamma$ is the piecewise $C^1$ curve given by
$$\gamma (s)=(\gamma_1 \star \cdots \star \gamma
_{2n+3}) (s)=(y(s),t(s)),\quad s\in[0,b],$$ we get that $\varphi
\circ \gamma$ is piecewise $C^1$ and
\begin{equation}\label{stima702}
\begin{aligned}
 |\varphi(y,t)  - \langle \nabla ^{\varphi } \varphi (0) ,y \rangle| & =
 \left| \int_0^b (\varphi \circ \gamma )' (s)\, \mathrm{d} s -\int _0^b  \langle \nabla ^{\varphi } \varphi (0) , \dot{y}(s)\rangle  \, \mathrm{d} s \right|\\
 & \overset{\eqref{(6.31).2}}{\leq} \int _0^b | \nabla ^\varphi \varphi (\gamma (s) ) - \nabla ^\varphi \varphi (0)| \, \mathrm{d} s, \\
& \lesssim _H \int _0^b \|\gamma (s)\|^\alpha   \, \mathrm{d} s \\
\end{aligned}
\end{equation}
where in the last inequality we have used the assumption that
$\varphi$ satisfies \eqref{CFOdef}. In the first inequality, we
used the fact that for almost all $s\in [0,b]$, the tangent vector
$\dot{y}(s)$ exists by construction, and is of the form
$(0,\ldots,0,\pm 1,0,\ldots,0)$ with $$(\varphi \circ
\gamma)'(s)\overset{\eqref{(6.31).2}}{=}
D_j^{\varphi}\varphi(\gamma(s))=\langle
\nabla^{\varphi}\varphi(\gamma(s)),\dot{y}(s)\rangle$$ if the
$x_j$-component of $\dot{y}(s)$ is $+1$, and $$(\varphi \circ
\gamma)'(s)\overset{\eqref{(6.31).2}}{=}
-D_j^{\varphi}\varphi(\gamma(s))=\langle
\nabla^{\varphi}\varphi(\gamma(s)),\dot{y}(s)\rangle$$ if the
$x_j$-component of $\dot{y}(s)$ is $-1$. Having established
\eqref{stima702}, we will estimate from above the expression
\begin{equation*}
  \int _0^b \|\gamma (s)\|^\alpha  \, \mathrm{d} s  = \int _0^{|x_{n+1}|} \|\gamma _1(s)\|^\alpha \, \mathrm{d} s +
   \int _{|x_{n+1}|}^b  \| (\gamma _2 \star \dots  \star \gamma _{2n+3}) (s)\|^\alpha \,
  \mathrm{d} s.
\end{equation*}
Firstly, since \eqref{(6.31)} holds, we have
\begin{equation*}
\begin{aligned}
 \int _0^{|x_{n+1}|} \|\gamma _2(s)\|^\alpha \, \mathrm{d} s & \lesssim
\int _0^{|x_{n+1}|} |s |^\alpha + \sqrt{|\tau (\mathrm{sgn}(x_{n+1})s)|^\alpha } \, \mathrm{d} s  \lesssim _L \| (y,t)\|^{\alpha +1} \\
\end{aligned}
\end{equation*}
and secondly, by definition of $\gamma_j$, $j \in \{2,\dots, 2n
+3\} $, we have
\begin{equation*}\begin{aligned}
|b-|x_{n+1}||& = \mbox{length}_{\mathbb{H}^n} (\gamma _2 \star
\dots \star \gamma _{2n+3})
\stackrel{\eqref{eq:a_2},\eqref{nuvola}}{ \lesssim }_L  \| (y,t)\|. \\
 \end{aligned}
\end{equation*}
This combined with the first condition in \eqref{equation1803}
yields
\begin{equation*}
\begin{aligned}
 \int _{|x_{n+1}|}^b  \| (\gamma _2 \star \dots  \star \gamma _{2n+3}) (s)\|^\alpha \,ds \lesssim_L  \| (y,t)\|^{\alpha +1} .
\end{aligned}
\end{equation*}
 Putting together \eqref{stima702} and the last estimates, we can conclude
\begin{equation*}
\begin{aligned}
 |\varphi  (y,t) - \langle \nabla ^{\varphi } \varphi (0), y\rangle|
 & \lesssim_H \int _0^b \|\gamma (s)\|^\alpha  \, \mathrm{d} s   \lesssim_{H,L} \| (y,t)\|^{\alpha +1}\\
\end{aligned}
\end{equation*}
and the proof of Proposition  \ref{p:Approx} is complete.
\end{proof}

\begin{remark}\label{r:constants}
Recall from Proposition \ref{equivProp} that a compactly supported
function $\varphi \in C^{1,\alpha}_{\He}(\W)$ (as in Theorem
\ref{mainGraphs}) satisfies \eqref{CFOdef} for some constant $H
\geq 1$. The letter $H$ will refer to this constant for the rest
of Section \ref{s:C1alpha}. We also remark that $\varphi$ is
intrinsic Lipschitz, recall Definition \ref{d:intrinsicGraphs}.
Indeed, a $C^{1,\alpha}_{\He}(\W)$ function is intrinsically
differentiable with continuous intrinsic gradient, and the compact
support assumption implies that $\nabla^{\varphi}\varphi \in
L^{\infty}(\W)$. In the case $n=1$, \cite[Lemma 2.22]{CFO2} states
that then $\varphi$ is intrinsic Lipschitz. One could adapt the
proof to higher dimensions using Proposition \ref{p:Approx}, or
alternatively refer to \eqref{eq:LipConclu} to conclude also for
$n>1$ that $\varphi$ is intrinsic Lipschitz. We denote by $L$ the
maximum of the intrinsic Lipschitz constant of $\varphi$, and the
sup-norm $\|\nabla^{\varphi}\varphi\|_{L^{\infty}(\W)}$. The
compact support assumption of $\varphi \in C^{1,\alpha}_{\He}(\W)$
is initially needed to ensure that $\max\{H,L\} < \infty$.
However, the constants are then left-invariant: if $p \in \He$,
then $\varphi^{(p^{-1})}$ is intrinsically differentiable, its
intrinsic gradient is continuous, and satisfies \eqref{CFOdef}
 with the same constant $H$ (see
\cite[Lemma 2.25]{CFO2}),
$\|\nabla^{\varphi^{(p^{-1})}}\varphi^{(p^{-1})}\|_{L^{\infty}(\W)}\leq
L$ by Lemma \ref{invintrgrad}, and $\varphi^{(p^{-1})}$ is
intrinsic Lipschitz with constant $L$, even though the support of
$\varphi^{(p^{-1})}$ of course depends on $p$.
\end{remark}

We use Proposition \ref{p:Approx} to quantify how well the
intrinsic graph $S\subset \He^n$ of a compactly supported
$C^{1,\alpha}_{\He}$ function is approximated at a point $p\in S$
by a certain vertical plane. For $p  \in S$, let $\W_p = W_p
\times \mathbb{R}$ be the unique vertical subgroup with the
property that $W_p$ is a $(2n-1)$ dimensional subspace of
$\mathbb{R}^{2n}$ which is perpendicular to the line spanned by
the vector $\nu_{\He}(p)$ using coordinates as in \eqref{form32}.

\begin{proposition}\label{approxProp} Fix $n\in \N$ and $0<\alpha\leq 1$. Let $\varphi \in C^{1,\alpha}_{\He}(\W)$ be compactly supported
on the codimension-$1$ vertical subgroup $\W\subset \He^n$, and
write $S := \Phi(\W)$. Then, there exists a constant $A = A(H,L)
\geq 1$ such that for every $p \in S$,
\begin{equation}\label{form20} \dist_{\He}(q,S) \leq A d_{\He}(p,q)^{1 + \alpha}, \qquad q \in p \cdot \W_{p}. \end{equation}
Here $H$ and $L$ are defined as in Remark \ref{r:constants}, that
is, $\varphi$ satisfies \eqref{CFOdef} with constant $H$,
$\varphi$ is intrinsic $L$-Lipschitz, and
$\|\nabla^{\varphi}\varphi\|_{L^{\infty}(\W)}\leq L$.
\end{proposition}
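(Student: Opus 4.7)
The plan is to reduce the statement to the case $p = 0$ by left-translation, and then estimate $\dist_{\He}(q, S)$ by comparing $q$ to the single candidate point $\Phi(\pi_{\W}(q)) \in S$, where the needed bound follows directly from Proposition \ref{p:Approx}.

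First, I would use left-invariance of $d_{\He}$ together with Lemma \ref{invintrgrad} and Remark \ref{r:constants} to reduce to $p = 0$ and $\varphi(0) = 0$. Writing $\psi := \varphi^{(p^{-1})}$, the intrinsic graph of $\psi$ is $p^{-1} \cdot S$, and by Remark \ref{r:constants} all hypotheses transfer to $\psi$ with the same constants $H$ and $L$. The key observation is that $\nabla^{\psi}\psi(0) = \nabla^{\varphi}\varphi(w)$ by Lemma \ref{invintrgrad}, so \eqref{form32} implies that the vertical subgroup associated to $0$ and $\Phi^{(p^{-1})}(\W)$ coincides with $\W_{p}$. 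Hence $q \in p \cdot \W_{p}$ is equivalent to $p^{-1} \cdot q \in \W_0$ (for the translated function), and the distance and the distance-to-$S$ are both preserved.

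Writing $v := \nabla^{\varphi}\varphi(0) \in \R^{2n-1}$, the formula \eqref{form32} identifies
\begin{displaymath}
\W_{0} = \{(x_1, y, t) : x_1 = \langle v, y\rangle\}.
\end{displaymath}
For $q = (\langle v, y\rangle, y, t) \in \W_{0}$, a direct computation of the Heisenberg splitting $q = \pi_{\W}(q) \cdot \pi_{\V}(q)$ gives
\begin{displaymath}
\pi_{\W}(q) = (0, y, t + \tfrac{1}{2} y_{n+1} \langle v, y\rangle), \qquad \pi_{\V}(q) = (\langle v, y\rangle, 0, \ldots, 0).
\end{displaymath}
The candidate approximating point on $S$ is $\Phi(\pi_{\W}(q))$. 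Left-invariance of $d_{\He}$ and the fact that the group operation on $\V$ is Euclidean yield
\begin{displaymath}
d_{\He}(q, \Phi(\pi_{\W}(q))) = d_{\He}(\pi_{\V}(q), \varphi(\pi_{\W}(q))) = |\langle v, y\rangle - \varphi(\pi_{\W}(q))|.
\end{displaymath}
Applying Proposition \ref{p:Approx} at $w = 0$ to $\pi_{\W}(q)$ bounds this by a constant (depending on $H$ and $L$) times $\|\pi_{\W}(q)\|^{1 + \alpha}$, since the linear term $\langle \nabla^{\varphi}\varphi(0), y\rangle$ is precisely $\langle v, y\rangle$.

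The last routine check is $\|\pi_{\W}(q)\| \lesssim_{L} \|q\|$: the horizontal part is $|y| \leq \|q\|$, while for the vertical part the extra term $\tfrac{1}{2} y_{n+1} \langle v, y\rangle$ is controlled by $L |y|^{2} \leq L \|q\|^{2}$, and $|t| \lesssim \|q\|^{2}$ by the definition of the Kor\'anyi norm. Combining everything gives $\dist_{\He}(q, S) \leq d_{\He}(q, \Phi(\pi_{\W}(q))) \lesssim_{H,L} \|q\|^{1 + \alpha} = d_{\He}(0, q)^{1 + \alpha}$, which is the claim after undoing the left-translation. The main obstacle I foresee is verifying cleanly that the reduction preserves the subgroup $\W_p$ and the constants; once this bookkeeping is done, the estimate itself is essentially a single application of Proposition \ref{p:Approx}.
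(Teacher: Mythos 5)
Your proposal is correct and follows essentially the same route as the paper: both compare $q \in p\cdot\W_p$ with the point of $S$ having the same vertical projection, bound their distance by $|\varphi^{(p^{-1})}(y,t)-\langle\nabla^{\varphi}\varphi(w),y\rangle|$ via Proposition \ref{p:Approx}, and finish with the comparability $\|\pi_{\W}(q)\|\lesssim_L\|q\|$ (the paper phrases this as $\|L_p(y,t)\|\sim_L\|(y,t)\|$ for the parametrisation $L_p:\W\to\W_p$). The only difference is bookkeeping: you work in ambient coordinates after translating $p$ to the origin, while the paper keeps $p$ general and parametrises $\W_p$ by $L_p$.
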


\begin{proof}[Proof of Proposition \ref{approxProp}] The plan is to apply estimate \eqref{CFOEstimate} from Proposition \ref{p:Approx}. Note that
\begin{displaymath}
L_{p}(y,t) := (0,y,t) \cdot
(\langle\nabla^{\varphi}\varphi(w),y\rangle,0,0)
\end{displaymath}
 defines a map $\W \to \W_{p}$ since
 \begin{equation}\label{eq:W_p}
 W_p = \left\{\left(\sum_{i=2}^{2n} D_i^\varphi\varphi(w)x_i,x_2,\ldots,x_{2n}\right):\; (x_2,\ldots,x_{2n})\in \mathbb{R}^{2n-1}\right\}
 \end{equation}
 is a $(2n-1)$-plane perpendicular to
 \begin{displaymath}
\begin{pmatrix}-1\\D_2^{\varphi}\varphi(w)\\\vdots\\ D_{2n}^{\varphi}\varphi(w)\end{pmatrix}
 \end{displaymath}
  in $\mathbb{R}^{2n}$.
 In fact, $L_{p}$ is a bijection $\W \to \W_{p}$, and for all $ (y,t)=(0,x_2,\ldots,x_{2n},t) \in
 \W$,
\begin{equation}\label{form22} \|L_{p}(y,t)\| = \left\|\left(\langle\nabla^{\varphi}\varphi(w),y\rangle,y,t -
\tfrac{1}{2}x_{n+1}\langle\nabla^{\varphi}\varphi(w),y\rangle\right)\right\|
\sim_{L} \|(y,t)\|,
\end{equation} because $|\nabla^{\varphi}\varphi(w)| \leq
\|\nabla^{\varphi}\varphi\|_{L^{\infty}(\W)} \leq L$. The last
observation immediately implies the inequality ``$\lesssim_L$'' in
\eqref{form22}. It also gives the converse inequality, since
\begin{align*}
\|(y,t)\|&\lesssim |y| + |t|^{\frac{1}{2}}\lesssim |y| + \left|t -
\tfrac{1}{2}x_{n+1}\langle\nabla^{\varphi}\varphi(w),y\rangle\right|^{\frac{1}{2}}
+
\left|\tfrac{1}{2}x_{n+1}\langle\nabla^{\varphi}\varphi(w),y\rangle\right|^{\frac{1}{2}}\\&\lesssim_L
|y|+ \left|t -
\tfrac{1}{2}x_{n+1}\langle\nabla^{\varphi}\varphi(w),y\rangle\right|^{\frac{1}{2}}
\lesssim_L  \|L_{p}(y,t)\|.
\end{align*}
Moreover, for arbitrary $w=(y,t)\in \W$, we have $d_{\He}(q,S)
\leq d_{\He}(q,p \cdot \Phi^{(p^{-1})}(y,t))$, where
$\Phi^{(p^{-1})}$ is the graph map of $\varphi^{(p^{-1})}$, simply
because $\Phi^{(p^{-1})}(\W) = p^{-1} \cdot S$. Therefore, by the
left-invariance of $d_{\He}$, one has for $q=p\cdot L_{p}(y,t)$,
\begin{align*} \dist_{\He}(q,S) \leq d_{\He}(p^{-1} \cdot q,\Phi^{(p^{-1})}(y,t))
& = d_{\He}(L_p(y,t),\Phi^{(p^{-1})}(y,t))\\
& = |\varphi^{(p^{-1})}(y,t) - \langle\nabla^{\varphi}\varphi(w),y\rangle| \stackrel{\eqref{CFOEstimate}}{\lesssim}_{H,L} \|(y,t)\|^{1 + \alpha} \notag\\
& \stackrel{\eqref{form22}}{\sim}_{H,L} \|L_{p}(y,t)\|^{1 +
\alpha} = d_{\He}(p,q)^{1 + \alpha}. \notag\end{align*} This
proves \eqref{form20}, and hence the proposition. \end{proof}

\subsubsection{Reduction to unit scale}\label{ss:red_unit_scale}
The rest of Section \ref{s:C1alpha} is devoted to the proof of
Theorem \ref{mainGraphs} in the case $n>1$. With the earlier
preparations in place, a proof for the case $n=1$ could be
obtained along the same lines, but some steps would require a
separate discussion. Instead, we will deduce the case $n=1$ later
from a more general result, see Theorem \ref{mainGraphsn_1}. So we
fix $n>1$ for the rest of this section, and constants will be
allowed to depend on $n$ without special mentioning.

Theorem \ref{mainGraphs} for $n>1$ is essentially a corollary of
Theorem \ref{main} applied to
\begin{equation}
\label{eq:G_M} (G,d_{G},\mu) =(\mathbb{H}^{n-1}\times
\mathbb{R},d_{\He^{n-1}\times \mathbb{R}},\mathcal{L}^{2n})\quad
\text{and} \quad (M,d_{M}) = (S,d_{\He}), \end{equation} where
$(G,d_G)$ is defined as in \eqref{eq:HeisxR} and the line below
it.

Once the hypotheses of Theorem \ref{main} have been verified -- a
task occupying the next section -- the theorem will yield the
existence of $2L'$-bilipschitz maps $f \colon K \to S \cap
B(p,1)$, $p \in S$, where $K \subset G$ with $\calH^{2n+1}(K) \geq
\delta
>0$. The constant $\delta > 0$ will only depend on $\alpha$, the H\"older
constant $H$ in \eqref{CFOdef}, the constant $L$ that bounds the
intrinsic Lipschitz constant of $\varphi$ and the
$L^{\infty}$-norm of $\nabla^{\varphi}\varphi$, whereas the
constant $L'$ will depend only on $L$. This is saying, in particular, that the BP$G$BI condition holds at
unit scale. How about other scales? The following easy lemma shows
that property \eqref{CFOdef} improves under ``zooming in'':
\begin{lemma}\label{blowUpLemma} Let $\varphi:  \W \to \V$ be
intrinsically differentiable with continuous intrinsic gradient
$\nabla^{\varphi}\varphi$ that satisfies \eqref{CFOdef} with
constants  $\alpha > 0$ and $H \geq 1$. For $r>0$, let
\begin{displaymath} \varphi_{r}(w) := \tfrac{1}{r}[\varphi \circ \delta_{r}],\quad w\in \W. \end{displaymath}
Then, $\psi:=\varphi_{r}$ is an intrinsically differentiable
function with intrinsic graph $\delta_{1/r}(\Phi(\W))$, its
intrinsic gradient is continuous, and satisfies \eqref{CFOdef}
with constants $\alpha$ and $r^{\alpha}H$, that is
\begin{displaymath}
|\nabla^{\psi^{(p^{-1})}}\psi^{(p^{-1})}(w)-\nabla^{\psi^{(p^{-1})}}\psi^{(p^{-1})}(0)|\leq
r^{\alpha} H \|w\|^{\alpha},\quad w\in \W,p\in
\delta_{1/r}(\Phi(\W)).
\end{displaymath}
\end{lemma}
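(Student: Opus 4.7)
The plan is to deduce everything from the single observation that dilations commute nicely with the intrinsic graph structure. Concretely, I would first verify by direct computation that the intrinsic graph of $\varphi_{r}$ equals $\delta_{1/r}(\Phi(\W))$: since $\delta_{1/r}$ is a group homomorphism that fixes $\W$ and $\V$ setwise and acts as scalar multiplication by $1/r$ on each of $\W$ and $\V$ (both being homogeneous of degree $1$ in all their coordinates that are not $t$), one has $\delta_{1/r}(\Phi(\delta_r(w))) = \delta_{1/r}(\delta_r(w)) \cdot \delta_{1/r}(\varphi(\delta_r(w))) = w \cdot \varphi_r(w) = \Phi_r(w)$. A corollary is that for any $p \in \Phi(\W)$ and $p_0 := \delta_{1/r}(p) \in \Phi_r(\W)$, the identity $\varphi_r^{(p_0^{-1})} = (\varphi^{(p^{-1})})_r$ holds, because both sides parametrise the intrinsic graph $\delta_{1/r}(p^{-1} \cdot \Phi(\W))$.

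Next, I would verify the intrinsic differentiability of $\varphi_{r}$ at an arbitrary $w_{0} \in \W$ with differential $L_r := \tfrac{1}{r}(L \circ \delta_r)$, where $L = d^{\varphi}\varphi(\delta_r(w_0))$. The intrinsic graph of $L_r$ equals $\delta_{1/r}$ applied to the vertical subgroup $\{w \cdot L(w)\}$, hence is itself a vertical subgroup. Using that $\V$ is the abelian $x_1$-axis, so that $L_r(w)^{-1} \cdot \varphi_{r}^{(p_0^{-1})}(w) = \tfrac{1}{r}\bigl(L(\delta_r(w))^{-1} \cdot \varphi^{(p^{-1})}(\delta_r(w))\bigr)$ in $\V$, and that the Kor\'anyi norm satisfies $\|\delta_{1/r}(\cdot)\| = r^{-1}\|\cdot\|$, I get
\begin{displaymath}
\frac{\|L_r(w)^{-1} \cdot \varphi_{r}^{(p_0^{-1})}(w)\|}{\|w\|} = \frac{\|L(\delta_r(w))^{-1} \cdot \varphi^{(p^{-1})}(\delta_r(w))\|}{\|\delta_r(w)\|} \longrightarrow 0 \quad \text{as } \|w\| \to 0.
\end{displaymath}
Comparing $L_r(w) = \tfrac{1}{r}\langle \nabla^{\varphi}\varphi(\delta_r(w_0)), \pi(\delta_r(w))\rangle = \langle \nabla^{\varphi}\varphi(\delta_r(w_0)), \pi(w)\rangle$ with $L_r(w) = \langle \nabla^{\varphi_r}\varphi_r(w_0), \pi(w)\rangle$ yields the scaling formula for the intrinsic gradient,
\begin{displaymath}
\nabla^{\varphi_r}\varphi_r(w) = \nabla^{\varphi}\varphi(\delta_r(w)), \qquad w \in \W.
\end{displaymath}
Continuity of $\nabla^{\varphi_r}\varphi_r$ is then immediate from continuity of $\nabla^{\varphi}\varphi$.

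Finally, the H\"older bound is a one-line consequence. For $p_0 \in \Phi_r(\W)$ with $p_0 = \delta_{1/r}(p)$, Lemma \ref{invintrgrad} applied to $\varphi_r$, combined with the scaling formula and the identity $\delta_r \circ \pi_{\W} = \pi_{\W} \circ \delta_r$, gives
\begin{displaymath}
\nabla^{\varphi_r^{(p_0^{-1})}}\varphi_r^{(p_0^{-1})}(w) = \nabla^{\varphi}\varphi\bigl(\pi_{\W}(p \cdot \delta_r(w))\bigr), \qquad w \in \W.
\end{displaymath}
Substituting $w' := \delta_r(w)$ into \eqref{CFOdef} for $\varphi$ and using $\|w'\| = r\|w\|$ yields
\begin{displaymath}
\bigl|\nabla^{\varphi_r^{(p_0^{-1})}}\varphi_r^{(p_0^{-1})}(w) - \nabla^{\varphi_r^{(p_0^{-1})}}\varphi_r^{(p_0^{-1})}(0)\bigr| \leq H\|\delta_r(w)\|^{\alpha} = r^{\alpha} H \|w\|^{\alpha},
\end{displaymath}
which is the claim. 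No step is conceptually hard; the only small obstacle is keeping track of which objects live in $\W$, $\V$, or $\He^n$ so that each application of $\delta_{1/r}$ is legal and the translated-function identity $\varphi_r^{(p_0^{-1})} = (\varphi^{(p^{-1})})_r$ is justified. Once that bookkeeping is done, the statement reduces to the observation that a function satisfying \eqref{CFOdef} with constant $H$ and then precomposed with $\delta_r$ gains a factor $r^\alpha$ in its H\"older norm.
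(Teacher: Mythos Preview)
Your proof is correct and follows essentially the same approach as the paper: both arguments identify the intrinsic graph of $\varphi_r$ as $\delta_{1/r}(\Phi(\W))$, derive the scaling formula $\nabla^{\varphi_r}\varphi_r = \nabla^{\varphi}\varphi \circ \delta_r$, and then use the compatibility of dilations with left translations (in your case via Lemma~\ref{invintrgrad}, in the paper via the identity $\psi^{(p^{-1})} = \tfrac{1}{r}\varphi^{(\delta_r(p)^{-1})}\circ\delta_r$) to transfer the H\"older bound. One small slip in exposition: $\delta_{1/r}$ does not act as scalar multiplication by $1/r$ on all of $\W$ (the $t$-coordinate scales by $1/r^2$), but this does not affect your computation since the only place you need scalar action is on $\V$, where it is correct.
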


\begin{proof}
Fix  $0 < r \leq 1$ and let $w$ be an arbitrary point in $\W$.
Then
\begin{displaymath}
\delta_{\frac{1}{r}}[w\cdot \varphi(w)] =
\delta_{\frac{1}{r}}(w)\cdot \delta_{\frac{1}{r}}(\varphi(w)) =
\delta_{\frac{1}{r}}(w)\cdot
\delta_{\frac{1}{r}}(\varphi(\delta_r(\delta_{\frac{1}{r}}(w)))),
\end{displaymath}
which shows that $\delta_{\frac{1}{r}}(\Phi(\W))$ is the intrinsic
graph of $\psi=\varphi_r$ as defined in the lemma. Since the
Heisenberg dilations are group isomorphisms which commute with
vertical projections, it is easy to see  that $\psi$ is
intrinsically differentiable with intrinsic gradient
\begin{displaymath}
\nabla^{\psi}\psi = \nabla^{\varphi}\varphi \circ \delta_r.
\end{displaymath}
Moreover, since $\psi^{(p^{-1})} = \frac{1}{r}
\varphi^{(\delta_r(p)^{-1})} \circ \delta_r$ for $p\in
\delta_{1/r}(\Phi(\W))$, we have
\begin{displaymath}
\nabla^{\psi^{(p^{-1})}}\psi^{(p^{-1})}=
\nabla^{\varphi^{(\delta_r(p)^{-1})} }
\varphi^{(\delta_r(p)^{-1})} \circ \delta_r,
\end{displaymath}
which yields the remaining claims in the lemma.
\end{proof}

Returning to the proof of Theorem \ref{mainGraphs}, let $p \in S$
and, first, $0 < r \leq C$, where $C := 2\diam_{\He}(\Phi(\spt
\varphi))$. Using the previous lemma, and also recalling that
dilations have no effect on intrinsic Lipschitz constants,
$S_{1/r} := \delta_{1/r}(S)$ is an intrinsic graph of an intrinsic
Lipschitz function with essentially bounded intrinsic gradient
satisfying \eqref{CFOdef} with constants depending only on the
corresponding constants for $S$, and $C$.
  Therefore,
by the BP$G$BI property at scale $r = 1$, to be established in the
next section, every ball $S_{1/r} \cap B(p,1)$ contains the image
of a $2L'$-bilipschitz map $g$ from a compact set $K \subset G$
with $\calH^{2n+1}(K) \geq \delta = \delta(C) > 0$. Now, one may
simply pre- and post-compose $g$ with the natural dilations in $G$
and $\He^n$ to produce a $2L'$-bilipschitz map $g_{r} \colon
\delta_{r}(K) \to S \cap B(\delta_{r}(p),r)$ (note also that
$\calH^{2n+1}(\delta_{r}(K)) = r^{2n+1}\calH^{2n+1}(K) \geq \delta
r^{2n+1}$).

Next, consider the case $r > C$. Then, if $p \in S$ is arbitrary,
the set $S \cap B(p,r)$ satisfies
\begin{displaymath} \calH^{2n+1}([S \cap B(p,r)] \cap \W) \gtrsim \calH^{2n+1}(S \cap B(p,r)). \end{displaymath}
Thus, the restriction of $\mathrm{Id}$ to $[S \cap B(p,r)] \cap
\W$ (composed with an isometry $G \cong \W$) yields the desired
bilipschitz map in this case.

\subsection{Proof for graphs with H\"older continuous normals}\label{verification}
In this section we complete the proof of Theorem \ref{mainGraphs}.
After the reduction to unit scale in Section
\ref{ss:red_unit_scale}, it remains to verify the hypotheses of
Theorem \ref{main} for $n>1$, $(G,d_{G}):=(\mathbb{H}^{n-1}\times
\R,d_{\mathbb{H}^{n-1}\times \R})$, and $(M,d_{M}) :=
(S,d_{\He})$, where $S = \Phi(\W)$, as in Theorem
\ref{mainGraphs}. To be accurate, also take $x_{0} = 0 \in G$, and
fix $p_{0} \in M$ arbitrary. We start by defining the maps $i_{w
\to p} \colon G \to S$. They will not depend on the scale index $k
\geq 0$, that is, $i_{w \to p}^{k} = i_{w \to p}$ for all $k \geq
0$, and they can also be defined for all points $w \in G$, $p \in
M$ (and not only those close to $x_{0}$ and $p_{0}$).

 To construct the  maps
$i_{w \to p} \colon G \to S$, we will first define certain
bilipschitz maps $\Psi_p:\W \to \W_p$. We know that $\W$ is
isometric to $\W_p$, so without further restrictions, this would
be an easy task, but keeping in mind \eqref{comp}, we want to make
sure that the mappings $\Psi_p$ change in a controlled way as we
let $p$ vary in $S$. It would be possible to arrange this even for
isometric $\Psi_p$, but the construction is simpler if we allow
for bilipschitz distortion, and the main ideas are contained in
the following lemma.

\begin{lemma}\label{l:Psi_A}
For $n\geq 2$ and $D:=(a_2,\ldots,a_n,c,b_2,\ldots,b_n)\in
\R^{2n-1}$, define
\begin{displaymath}
\psi_D(x_2,\ldots,x_{2n}):= cx_{n+1}+\sum_{i=2}^n (a_i x_i + b_i
x_{n+i}),
\end{displaymath}
and consider the  vertical subgroups
\begin{displaymath} \W :=
\{(x_1,\ldots,x_{2n},t)\in \He^n:\;x_1=0\}\text{ and }\W ':=
\left\{(x_1,\ldots,x_{2n},t)\in
\He^n:\;x_1=\psi_D(x_2,\ldots,x_{2n})\right\}.
\end{displaymath}
Then the map $\Psi_D(0,x_2,\ldots,x_{2n},t):=$
\begin{displaymath}
\left(\psi_D(x_2,\ldots,x_{2n}),b_2 x_{n+1}+x_2,\ldots,b_n
x_{n+1}+x_n,x_{n+1},-a_2 x_{n+1}+x_{n+2},\ldots,-a_n
x_{n+1}+x_{2n},t \right)
\end{displaymath}
has the following properties:
\begin{enumerate}
\item $\Psi_D:(\W,\cdot)\to (\W',\cdot)$ is a group isomorphism,
\item $\Psi_D:(\W,d_{\He})\to (\W',d_{\He})$ is $L_D$-bilipschitz
with $L_D$ depending continuously on $D$, \item
$d_{\He}(\Psi_D(w),\Psi_{D'}(w))\lesssim
\max\{|D-D'|,|D-D'|^{1/2}\}\,\|w\|$, for all $w\in \W$.
\end{enumerate}
\end{lemma}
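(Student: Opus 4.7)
The plan is to verify the three items by direct algebraic computation, since the map $\Psi_D$ is essentially linear in the horizontal layer with the vertical coordinate untouched.

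For item (1), I first check that $\Psi_D$ maps $\W$ into $\W'$: substituting the last $2n-1$ coordinates of $\Psi_D(0,x_2,\ldots,x_{2n},t)$ into $\psi_D$ produces
\begin{displaymath}
c\,x_{n+1} + \sum_{i=2}^n a_i(b_i x_{n+1}+x_i) + \sum_{i=2}^n b_i(-a_i x_{n+1}+x_{n+i}) = \psi_D(x_2,\ldots,x_{2n}),
\end{displaymath}
so the $a_ib_i x_{n+1}$ cross-terms cancel and the output lands in $\W'$. For the group homomorphism property, write $\Psi_D(w)=(X_1,\ldots,X_{2n},t)$. Since $X_1,\ldots,X_{2n}$ are linear in $x_2,\ldots,x_{2n}$ and the $t$-coordinate passes through, the claim reduces to checking that the symplectic form in $\He^n$ evaluated on $\Psi_D(w)$ and $\Psi_D(w')$ equals that evaluated on $w$ and $w'$. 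A direct expansion of $\sum_{i=1}^n(X_iX'_{n+i}-X_{n+i}X'_i)$ shows that the $i=1$ term contributes $\sum_{i=2}^n a_i(x_i x'_{n+1}-x_{n+1}x'_i)+b_i(x_{n+i}x'_{n+1}-x_{n+1}x'_{n+i})$, while the remaining terms contribute $\sum_{i=2}^n(x_ix'_{n+i}-x_{n+i}x'_i)$ minus exactly the first sum. Thus only $\sum_{i=2}^n(x_ix'_{n+i}-x_{n+i}x'_i)$ survives, which is what the group law on $\W$ prescribes.

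For item (2), since $\Psi_D$ is a group isomorphism and $d_{\He}$ is left-invariant, bilipschitz equivalence reduces to showing $\|\Psi_D(w)\|\sim_D \|w\|$. As the $t$-coordinate is preserved, this reduces further to the Euclidean bilipschitz equivalence of the linear map on the first $2n$ coordinates, namely
\begin{displaymath}
(x_2,\ldots,x_{2n})\longmapsto\bigl(\psi_D(x), x_2+b_2 x_{n+1},\ldots,x_n+b_n x_{n+1},x_{n+1},x_{n+2}-a_2x_{n+1},\ldots,x_{2n}-a_n x_{n+1}\bigr).
\end{displaymath}
The upper bound $|X|\lesssim_D |x|$ is immediate; the lower bound follows by inverting the block of coordinates indexed by $2,\ldots,2n$, which is upper-triangular with unit diagonal and hence invertible with inverse bounded continuously in $D$. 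This also shows the Lipschitz constant $L_D$ depends continuously on $D$.

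For item (3), write $Y_i:=X_i-X'_i$ where $X,X'$ are the horizontal components of $\Psi_D(w),\Psi_{D'}(w)$ respectively. By linearity $|Y_i|\lesssim |D-D'|\,|x|\leq |D-D'|\,\|w\|$. Since $\Psi_D(w)$ and $\Psi_{D'}(w)$ share the same $t$-coordinate, a direct computation gives
\begin{displaymath}
[\Psi_{D'}(w)]^{-1}\cdot\Psi_D(w)=\Bigl(Y_1,\ldots,Y_{2n},\,\tfrac12\sum_{i=1}^n(X_{n+i}Y_i-X_iY_{n+i})\Bigr).
\end{displaymath}
The vertical component is bounded by $|D-D'|\,|x|^2\lesssim |D-D'|\,\|w\|^2$, while the horizontal part has Euclidean norm $\lesssim |D-D'|\,\|w\|$. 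Plugging into the definition of the Korányi norm yields $d_{\He}(\Psi_D(w),\Psi_{D'}(w))\lesssim \max\{|D-D'|,|D-D'|^{1/2}\}\,\|w\|$, as desired. The main (minor) obstacle throughout is bookkeeping of the symplectic cross-terms in the Heisenberg product, but no genuinely nontrivial step arises.
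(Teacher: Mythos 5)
Your proof is correct and follows essentially the same route as the paper: a direct check that the horizontal part is linear and the symplectic form is preserved for (1), reduction by left-invariance to the norm comparison $\|\Psi_D(v)\|\sim_D\|v\|$ for (2) (you invert the unipotent linear block explicitly where the paper takes a min/max over the Kor\'anyi unit sphere), and an explicit expansion of $\Psi_{D'}(w)^{-1}\cdot\Psi_D(w)$ for (3). One remark on (3): your bound for the vertical component, $|X_{n+i}Y_i|\lesssim |D-D'|\,|x|^2$, silently uses $|X|\lesssim (1+|D|)\,|x|$, so the implied constant depends on $D$; this is in fact unavoidable, since the exact vertical component contains the cross term $\tfrac{1}{2}\sum_{i=2}^n(a_ib_i'-a_i'b_i)x_{n+1}^2$ (which the paper's displayed formula for $\tau$ omits), but it is harmless because the lemma is only ever applied with $|D|,|D'|$ bounded by $L$.
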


\begin{proof}
We start by noting that
\begin{align*}
\psi_D(x_2,&\ldots,x_{2n})= c x_{n+1} +\sum_{i=2}^n (a_i x_i + b_i
x_{n+i} ) \\& = \psi_D(b_2x_{n+1}+x_2,\ldots,b_n
x_{n+1}+x_{n},x_{n+1},-a_2x_{n+1}+x_{n+2},\ldots,-a_n
x_{n+1}+x_{2n}),
\end{align*}
which can be used to show that $\Psi_D(\W)=\W'$. It further
follows directly from the definition that $\Psi_D$ is injective,
$\Psi_D(0)=0$, and $\Psi_D(w^{-1})=(\Psi_D(w))^{-1}$. In order to
see  that
\begin{equation}\label{eq:GroupHomo}
\Psi_D(w\cdot w') =\Psi_D(w)\cdot \Psi_D(w'),\quad w,w'\in \W,
\end{equation}
it suffices to verify the identity for the last components of the
points, as the first components agree obviously by linearity of
$\Psi_D$. For $w=(0,x_2,\ldots,x_{2n},t)$,
$w'=(0,x_2',\ldots,x_{2n}',t')$, we find that
\begin{align*}
[\Psi_D&(w\cdot w')]_{2n+1}= t+t'+\tfrac{1}{2}\sum_{i=2}^n (x_i
x_{n+i}'-x_{n+i}x_i'  ) \\
=&t+t'+ \tfrac{1}{2}\left(c x_{n+1} +\sum_{i=2}^n (a_i x_i + b_i
x_{n+i} ) \right)x_{n+1}'- \tfrac{1}{2}\left(c x'_{n+1} +\sum_{i=2}^n
(a_i x_i' +
b_i x_{n+i}' ) \right)x_{n+1}\\
&+\tfrac{1}{2} \sum_{i=2}^n \left((b_i x_{n+1}+x_i)(-a_i
x_{n+1}'+x_{n+i}')- (b_i x'_{n+1}+x_i')(-a_i x_{n+1}+x_{n+i}) \right)\\
=& [\Psi_D(w)\cdot \Psi_D(w')]_{2n+1},
\end{align*}
which shows \eqref{eq:GroupHomo} and thus completes the proof of
the first claim in the lemma. Using the group isomorphism property
and the fact that $\Psi_D$ commutes with Heisenberg dilations, we
next observe that
\begin{align*}
d_{\He}(\Psi_D(w),\Psi_D(w'))&= \|\Psi_D(w')^{-1}\cdot
\Psi_D(w)\|= \|\Psi_D(w'^{-1}\cdot w)\| \in [c_D d_{\He}(w,w'),
C_D d_{\He}(w,w')],
\end{align*}
where \begin{displaymath} c_D:= \min\{\|\Psi_D(v)\|:\;
\|v\|=1\}\quad \text{and}\quad C_D:= \max\{\|\Psi_D(v)\|:\;
\|v\|=1\}.\end{displaymath} This concludes  the second part of the
lemma, up to the continuity of $D\mapsto L_D$, which will follow
from the third part. To verify the third part, let us fix $D,D'\in
\R^{2n-1}$, and an arbitrary point $w=(0,x_2,\ldots,x_{2n},t)$ in
$\W$, and compute $\Psi_{D'}(w)^{-1}\cdot \Psi_D(w)=$
\begin{align*}
(\psi_{(D-D')}(w),(b_2-b_2')x_{n+1},\ldots,(b_n-b_{n}')x_{n+1},0,(a_2'-a_2)x_{n+1},\ldots,(a_n'-a_n)x_{n+1},
\tau),
\end{align*}
where
\begin{displaymath}
\tau:= \tfrac{1}{2}x_{n+1}\left[\psi_{(D-D')}(w)+\sum_{i=2}^n
\left((b_i-b_i')x_{n+i}+(a_i-a_i')x_i  \right) \right].
\end{displaymath}
This shows that
\begin{displaymath}
d_{\He}(\Psi_{D}(w),\Psi_{D'}(w))\lesssim
\max\{|D-D'|,|D-D'|^{1/2}\} \|w\|,
\end{displaymath}
as claimed.
\end{proof}
The mappings defined in Lemma \ref{l:Psi_A} will be used later in
the case where the components of $D$ are the entries of an
intrinsic gradient $\nabla^{\varphi}\varphi(w)$. For $p=\Phi(w)$,
we then denote
\begin{equation}\label{eq:Psi_p}
\Psi_p:=
\Psi_{(D_2^{\varphi}\varphi(w),\ldots,D_{2n}^{\varphi}\varphi(w))},
\end{equation}
so that $\Psi_p(\W)=\W_p$ is the vertical plane appearing in
Proposition \ref{approxProp}.

\begin{remark}
It is important to note that $\Psi_p$ is different from the
obvious parametrization $L_p:\W \to \W_p$ used in the proof of
Proposition \ref{approxProp}. While $L_p$ is intrinsic Lipschitz,
the map $\Psi_p$ is metrically Lipschitz and it is obtained by
precomposing $L_p$ with a map that serves as ``characteristic
straightening map'' in this setting.
\end{remark}

\begin{remark}
The proof of Theorem  \ref{mainGraphs} can be modified so that it
yields  $2$-bilipschitz maps instead of $2L'$-bilipschitz maps for
a constant $L'=L'(L)>1$. In the case $n=1$ this is due to the
third author in an earlier version of this paper, and it is based
on replacing the bilipschitz map $\Psi_p:\W\to\W_p$ in
\eqref{eq:Psi_p} by the isometry $(0,y,t)\mapsto (y e_p, t)$,
where $e_p$ represents a horizontal unit vector vector (in the
$\{X_1,X_2\}$-frame) perpendicular to the horizontal normal
$\nu_{\He}(p)$.
\end{remark}

Returning to the proof of Theorem \ref{mainGraphs}, we proceed to
construct the mappings $i_{w\to p}:G\to \Phi(\W)$ for $w\in G$,
$p\in S=\Phi(\W)$. Since $G=\He^{n-1}\times \R$ (as in
\eqref{eq:HeisxR}) is isometrically isomorphic to $\W$ via the map
\begin{displaymath}
F:((z_1,\ldots,z_{2n-2},t),s)\mapsto
(0,z_1,\ldots,z_{n-1},s,z_n,\ldots,z_{2n-2},t)
\end{displaymath}
the  idea for the construction of  $i_{w\to p}(v)$ is informally
the following: identify $w^{-1}\cdot_G v \in G$ with the point
$F(w^{-1}\cdot_G v)\in \W$, then map this point to the vertical
plane $\W_p$ by means of the bilipschitz map $\Psi_p$ from
\eqref{eq:Psi_p}, left translate by the point $p$, and finally let
$i_{w\to p}(v)$  be a point in $S$ of minimal distance from
$p\cdot \Psi_p(F(w^{-1}\cdot_G v))\in p\cdot \W_p$, keeping in
mind Proposition \ref{approxProp}. Such a point may not be unique,
but this does not matter as long as the choice is made depending
only on the product $w^{-1}\cdot_G v$, and not on the points $v$
and $w$ individually.

We now explain the construction in detail. First, if $u\in G$, let
$$q := q[p,u] \in S$$ be any point satisfying
\begin{equation}\label{form31} d_{\He}\left(p \cdot \Psi_p(F(u)),q\right)
 = \dist_{\He}\left(p \cdot \Psi_p(F(u)),S\right). \end{equation}
Then, if $v,w \in {G}$ and $p \in S$, let
\begin{equation}\label{eq:i_1}i_{w \to p}(v) :=
q[p,w^{-1}\cdot_G v].\end{equation} The definition implies that if
$w,w',v,v' \in {G}$ with $w^{-1}\cdot_G v = (w')^{-1}\cdot_G v'$,
then
\begin{equation}\label{form30} i_{w \to p}(v) = i_{w' \to p}(v'). \end{equation}
To simplify notation in the sequel, we define $\Tan^{w}_{p} \colon
({G},d_{{G}}) \to (\W_{p},d_{\mathbb{H}})$ to be the map given by
\begin{equation}\label{eq:unitary_Tan}
\Tan^w_p(v) = \Psi_p (F(w^{-1}\cdot_G v)),\quad v\in G.
\end{equation}
It follows from Lemma \ref{l:Psi_A} that $\Tan^{w}_{p} \colon
({G},d_{{G}}) \to (\W_{p},d_{\mathbb{H}})$ is a bilipschitz map
with bilipschitz constant bounded in terms of
$\|\nabla^{\varphi}\varphi\|_{L^{\infty}(\W)}$. Evidently $i_{w
\to p}(w) = p \cdot \Tan_{p}^{w}(w) = p$. Also note that the
isomorphism property of $\Psi_p\circ F$ implies the following
"chain rule":
\begin{equation}\label{chain} \Tan^{w_{1}}_{p}(w_{3}) = \Tan^{w_{1}}_{p}(w_{2}) \cdot \Tan^{w_{2}}_{p}(w_{3}), \qquad w_{1},w_{2},w_{3} \in
G, \: p \in S. \end{equation} Since $p \cdot \Tan_{p}^w(v) \in p
\cdot \W_{p}$, one infers from \eqref{form20} and the definition
of $i_{w \to p}(v)$ that
\begin{align} d_{\He}(p \cdot \Tan^{w}_{p}(v),i_{w \to p}(v)) & = \dist_{\He}(p \cdot \Tan^{w}_{p}(v),S) \notag\\
&\label{form18} \leq Ad_{\He}(p \cdot \Tan_{p}^{w}(v),p)^{1 +
\alpha} \lesssim_L Ad_{G}(w,v)^{1 + \alpha}. \end{align} Using
this estimate,
 one has
\begin{align} |d_{\He} & (i_{w \to p}(v),i_{w \to p}(v')) - d_{\He}(p\cdot\Tan^{w}_{p}(v),p\cdot\Tan^{w}_{p}(v'))| \notag\\
& \leq d_{\He}(p \cdot \Tan^{w}_{p}(v),i_{w \to p}(v)) + d_{\He}(p \cdot \Tan^{w}_{p}(v'),i_{w \to p}(v')) \notag\\
&\label{form25} \lesssim_L A\max\{d_{G}(w,v)^{1 +
\alpha},d_{G}(w,v')^{1 + \alpha}\}, \qquad v,v' \in G.
\end{align}
Moreover
\begin{displaymath}
d_{\He}(p\cdot\Tan^{w}_{p}(v),p\cdot\Tan^{w}_{p}(v')) =
d_{\He}(\Psi_p(F(w^{-1}\cdot_G v)),\Psi_p(F(w^{-1}\cdot_G v'))),
\end{displaymath}
and since $\Psi_p \circ F$ is bilipschitz with a constant that
depends only on $\|\nabla^{\varphi}\varphi\|_{L^{\infty}(\W)}$,
condition \eqref{ISO} follows with the help of \eqref{form25}.

It remains to check condition \eqref{comp}. Using the homogeneity
of $G$  (see the discussion around \eqref{form27} for further
details), it suffices to verify the case "$x = 0$" of condition
\eqref{comp}: if $w \in G$ with $\|w\| \leq 2$, $p \in S$, and
$i_{0 \to p}(w) = q \in S$, then
\begin{equation}\label{form24} d_{\He}(i_{0 \to p}(v),i_{w \to q}(v)) \lesssim_{L} AH \max\{\|w\|^{1 + \alpha/2},\|v\|^{1 + \alpha/2},
d_{G}(v,w)^{1 + \alpha/2}\} \end{equation} for all $v \in G$ with
$\|v\|\leq 1$. (In particular, it may be interesting to note that the
$C^{1,\alpha}_{\He}$-hypothesis only gives the condition
\eqref{comp} with exponent $\alpha/2$.) To estimate the left hand
side of \eqref{form24}, the strategy will be to first obtain a
corresponding estimate for
\begin{displaymath} d_{\He}(p \cdot \Tan^{0}_{p}(v), q \cdot \Tan^{w}_{q}(v)), \end{displaymath}
and eventually conclude \eqref{form24} from this bound combined
with \eqref{form18}. Consider $v,w \in G$ with $\|w\| \leq 2$.
Start by applying the "chain rule" \eqref{chain}, and the triangle
inequality, as follows:
\begin{align*} d_{\He} (p \cdot \Tan^{0}_{p}(v), q \cdot \Tan^{w}_{q}(v))
& = d_{\He}([p \cdot \Tan^{0}_{p}(w)] \cdot \Tan^{w}_{p}(v), q \cdot \Tan^{w}_{q}(v))\\
& \leq d_{\He}([p \cdot \Tan^{0}_{p}(w)] \cdot \Tan^{w}_{p}(v), [p \cdot \Tan^{0}_{p}(w)] \cdot \Tan^{w}_{q}(v))\\
& \quad + d_{\He}([p \cdot \Tan^{0}_{p}(w)] \cdot
\Tan^{w}_{q}(v),q \cdot \Tan^{w}_{q}(v)) =: I_{1} + I_{2}.
\end{align*} To estimate $I_{1}$, note that by left-invariance
\begin{displaymath} I_{1} = d_{\He}(\Tan^{w}_{p}(v),\Tan^{w}_{q}(v)) =
d_{\mathbb{H}}(\Psi_p(F(w^{-1}\cdot_G v)),\Psi_q(F(w^{-1}\cdot_G
v))).
\end{displaymath}
Then \eqref{eq:Psi_p} and the third part of Lemma \ref{l:Psi_A}
imply  that
\begin{displaymath}
I_1=d_{\mathbb{H}}(\Psi_p(F(w^{-1}\cdot_G
v)),\Psi_q(F(w^{-1}\cdot_G v)))\lesssim_L
|\nabla^{\varphi}\varphi(\pi_{\W}(p))-\nabla^{\varphi}\varphi(\pi_{\W}(q))|^{1/2}
d_{G}(v,w).
\end{displaymath}
To proceed, we note that $p= a \cdot \varphi(a)$ and $q=b\cdot
\varphi(b)$ satisfy
\begin{align*}
|\nabla^{\varphi}\varphi(b)-\nabla^{\varphi}\varphi(a)|&=|\nabla^{\varphi}\varphi(\pi_{\W}(p\cdot
\varphi(a)^{-1}\cdot a^{-1}
\cdot b \cdot\varphi(a)))-\nabla^{\varphi}\varphi(\pi_{\W}(p))|\\
&=|\nabla^{\varphi^{(p^{-1})}}\varphi^{(p^{-1})}(\varphi(a)^{-1}\cdot a^{-1}\cdot b \cdot\varphi(a))) - \nabla^{\varphi^{(p^{-1})}}\varphi^{(p^{-1})}(0)|\\
&\overset{\eqref{CFOdef}}{\leq} H \|\varphi(a)^{-1}\cdot a^{-1}\cdot b \cdot\varphi(a)\|^{\alpha}\\
&=  H \|\varphi(a)^{-1}\cdot a^{-1}\cdot b \cdot\varphi(b) \cdot\varphi(b)^{-1} \cdot\varphi(a)\|^{\alpha}\\
&\leq 2 H d_{\mathbb{H}}(p,q)^{\alpha},
\end{align*}
using Lemma \ref{invintrgrad} when passing to the second line. It
follows for $p\in S$ and $q=i_{0\to p}(w)$ with $\|w\|\leq 2$ that
\begin{align}\label{form39}  |\nabla^{\varphi}\varphi(\pi_{\W}(p))-\nabla^{\varphi}\varphi(\pi_{\W}(q))| \leq
2 H d_{\mathbb{H}}(p,q)^{\alpha} & = 2 Hd_{\He}(i_{0 \to p}(0),i_{0 \to p}(w))^{\alpha}\\
& \stackrel{\eqref{form25}}{\lesssim}_{L} H[\|w\|^{\alpha} +
A\|w\|^{\alpha}] \lesssim_L AH\|w\|^{\alpha}. \notag\end{align}
One  needed here the assumption $\|w\| \leq 2$ since
\eqref{form25} initially gives a term of the form $\|w\|^{1 +
\alpha}$. The estimate above implies that
\begin{displaymath}
I_1 \lesssim_L
|\nabla^{\varphi}\varphi(\pi_{\W}(p))-\nabla^{\varphi}\varphi(\pi_{\W}(q))|^{1/2}
d_{G}(v,w) \lesssim_{L} A H\|w\|^{\alpha/2} d_G(v,w).
\end{displaymath}
Thus, the term $I_{1}$ is bounded from above by the right hand
side of \eqref{form24}.

The term $I_{2}$ has the form $I_{2} = \|\mathfrak{b}^{-1}
\cdot\mathfrak{ a} \cdot \mathfrak{b}\|$ with
\begin{displaymath} \mathfrak{a} = q^{-1} \cdot p \cdot \Tan^{0}_{p}(w) \quad \text{and} \quad \mathfrak{b} = \Tan^{w}_{q}(v). \end{displaymath}
Note that $\|\mathfrak{a}\| = d_{\He}(q,p \cdot \Tan^{0}_{p}(w))
\lesssim_L A\|w\|^{1 + \alpha}$ by \eqref{form18} (this is the
place where the relation $q = i_{0 \to p}(w)$ is used), whereas
$\|\mathfrak{b}\| \sim_L d_{G}(w,v)$. Now, writing $\mathfrak{a} =
(x_{a},t_{a})$ and $\mathfrak{b} = (x_{b},t_{b})$, one can easily
compute that
\begin{equation}\label{eq:FundCommRel} \mathfrak{b}^{-1} \cdot \mathfrak{a} \cdot \mathfrak{b}
 =\mathfrak{ a} \cdot \left(0,0,\sum_{i=1}^n \left( x_{a,i}x_{b,n+i}-x_{b,i}x_{a,n+i} \right) \right)=:\mathfrak{a}\cdot (0,0,\omega(x_a,x_b)).
 \end{equation}
(This is just the fundamental "commutator relation" in $\He^n$.)
Consequently,
\begin{align*} I_{2} & \leq \|\mathfrak{a}\| + \sqrt{|{\omega(x_a,x_b)}|} \lesssim_L A\|w\|^{1 + \alpha} + \sqrt{\|\mathfrak{a}\|\|\mathfrak{b}\|}\\
& \lesssim_L A\|w\|^{1 + \alpha} + A^{1/2}\|w\|^{1/2 + \alpha/2}d_{G}(w,v)^{1/2}\\
& \lesssim_L A\max\{\|w\|^{1 + \alpha/2},d_{G}(w,v)^{1 +
\alpha/2}\}.
\end{align*} This shows that also $I_{2}$ is bounded by the right
hand side of \eqref{form24}. Glancing again at the estimates for
$I_{1}$ and $I_{2}$, one sees that
\begin{equation}\label{form26} d_{\He}(p \cdot \Tan^{0}_{p}(v), q \cdot \Tan^{w}_{q}(v))
\lesssim_{L} AH\max\{\|w\|^{1 + \alpha/2},d_{G}(v,w)^{1 +
\alpha/2}\}, \end{equation} which is even a bit better than
\eqref{form24}. The estimate \eqref{form24} now follows from the
triangle inequality:
\begin{align*} d_{\He}(i_{0 \to p}(v),i_{w \to q}(v)) & \leq d_{\He}(i_{0 \to p}(v),p \cdot \Tan^{0}_{p}(v))\\
&\quad + d_{\He}(p \cdot \Tan^{0}_{p}(v), q \cdot \Tan^{w}_{q}(v))\\
&\quad + d_{\He}(i_{w \to q}(v),q \cdot \Tan^{w}_{q}(v)).
\end{align*} The middle term here is controlled by \eqref{form26},
and the first and third terms are controlled by \eqref{form18},
recalling the bounds for $\|v\|\leq 1$ and $\|w\|\leq 2$, which
ensure that we can replace "$\alpha$" by "$\alpha/2$" in
\eqref{form18}. This concludes the proof of \eqref{form24}.

Finally, we address the point left open above, that \eqref{form24}
looks slightly less general than \eqref{comp}. To check
\eqref{comp} properly, we need to fix $w_{1},w_{2} \in B_{G}(0,1)$
and $p,q \in S$ with $i_{w_{1} \to p}(w_{2}) = q$, and verify that
\begin{equation}\label{form27} d_{\He}(i_{w_{1} \to p}(w_{3}),i_{w_{2} \to q}(w_{3})) \lesssim
\max\{d_{G}(w_{1},w_{2}),d_{G}(w_{1},w_{3})),d_{G}(w_{2},w_{3})\}^{1
+ \alpha/2} \end{equation} for all $w_{3} \in G$ with
$d_G(w_1,w_3)\leq 1$. However, set $w := w_1^{-1}\cdot_G w_{2}$
and $v := w_1^{-1}\cdot_G w_{3}$, and observe that
\begin{displaymath}w_{2}^{-1}\cdot_G w_3 = w^{-1}\cdot_G v\quad \text{and} \quad w_{1}^{-1}\cdot_G w_3 =0^{-1}\cdot_G v. \end{displaymath}
These relations, and \eqref{form30}, show that
\begin{displaymath} i_{w_{1} \to p}(w_{3}) = i_{0 \to p}(v) \quad \text{and} \quad i_{w_{2} \to q}(w_{3}) = i_{w \to q}(v). \end{displaymath}
Thus, \eqref{form27} follows from \eqref{form24} applied with $w$
and $v$, as above.

This completes the verification of the hypotheses of Theorem
\ref{main}, and hence the proof of Theorem \ref{mainGraphs}.


\section{Lipschitz flags and extra vertical H\"older
regularity}\label{s:LipFlag}

This section is devoted to the first Heisenberg group, $\He^1$.
For convenience we use coordinates $(x,y,t)$, instead of
$(x_1,x_2,t)$,
 to denote points in $\He^1$. As
usual, we may identify $\mathbb{W}$ with $\mathbb{R}^2$ by mapping
$(0,y,t)$ to $(y,t)$, and we identify $(x,0,0)\in\mathbb{V}$ with
$x\in \mathbb{R}$.

 \begin{definition}\label{d:ExtraVerticalHolder}
We say that an intrinsic Lipschitz function $\varphi:\W \to \V$
has \emph{extra vertical H\"older regularity with constants
 $0<\alpha\leq 1$ and $H>0$} if
\begin{equation}\label{eq:VertHolDef}
|\varphi(y,t)-\varphi(y,t')|\leq H |t-t'|^{\frac{1+\alpha}{2}},
\end{equation}
for all $y,t,t'\in \mathbb{R}$.
 \end{definition}

Intrinsic Lipschitz functions are always $1/2$-H\"older continuous
with respect to the Euclidean metric along vertical lines.
Condition \eqref{eq:VertHolDef} constitutes an amount of extra
regularity at small scales which is not implied by the intrinsic
Lipschitz condition alone, see for instance \cite[Example
1.3]{MR3400438}.

 \begin{remark}\label{r:VertHolDilateTranslate}
The definition is left invariant: if $\varphi$ has extra vertical
H\"older regularity with constants $\alpha$ and $H$, then for
every $p\in \He^1$, the function $\varphi^{(p^{-1})}$ whose
intrinsic graph is $p^{-1}\cdot \Phi(\W)$ also has extra vertical
H\"older regularity with the same constants. Moreover, for $r>0$,
the function $\delta_\frac{1}{r} \circ \varphi \circ \delta_{r}$,
whose intrinsic graph is $\delta_{\frac{1}{r}}(\Phi(\W))$, has
extra vertical H\"older regularity with constants $\alpha$ and
$Hr^{\alpha}$. So condition \eqref{eq:VertHolDef} improves under
``zooming in''.
\end{remark}

\begin{remark}\label{r:small_scale} An intrinsic Lipschitz
function \textbf{with compact support} has extra vertical H\"older
regularity with constants $0<\alpha\leq 1$ and $H>0$ if and only
if there is $H'>0$ such that
\begin{equation}\label{eq:all_Scale}
|\varphi(y,t)-\varphi(y,t')|\leq \left\{\begin{array}{ll}H'
|t-t'|^{\frac{1+\alpha}{2}},&\text{if }|t-t'|\leq 1,\\H'
|t-t'|^{\frac{1-\alpha}{2}},&\text{if }|t-t'|\geq
1,\end{array}\right.,\quad y,t,t'\in \R,
\end{equation}
that is, $\varphi$ has extra vertical H\"older regularity in the
sense of \cite[Theorem 5.1]{fssler2018riesz}.
\end{remark}

Before studying in more detail the intrinsic graphs of functions
that satisfy the conditions in Definition
\ref{d:ExtraVerticalHolder}, we give two examples of such functions.

\begin{ex}\label{ex:EuclC1} Under the identification  $\mathbb{W}\triangleq \mathbb{R}^2$ and $\mathbb{V}\triangleq \mathbb{R}$ described
before Definition \ref{d:ExtraVerticalHolder}, every compactly
supported Euclidean Lipschitz function $\varphi:\mathbb{R}^2
\to \mathbb{R}$ is an intrinsic Lipschitz function that satisfies
the extra vertical H\"older regularity condition in Definition
\ref{d:ExtraVerticalHolder} with $\alpha=1$.
\end{ex}

\begin{ex}\label{ex:intrC1alpha}
Let $0<\alpha\leq 1$, $\W,\V\subset \He^1$ as above, and let
$\varphi:\W \to \V$ be a compactly supported
$C_{\He}^{1,\alpha}(\W)$ function. Since $\spt \varphi$ is
compact, $\nabla^{\varphi}\varphi$ is continuous and compactly
supported, hence $L :=
\|\nabla^{\varphi}\varphi\|_{L^{\infty}(\W)} < \infty$. According
to \cite[Lemma 2.22]{CFO2}, this implies  that $\varphi$ is
intrinsic Lipschitz. The extra vertical H\"older regularity
condition  follows from \cite[Proposition 4.2]{CFO2}, keeping in
mind the characterization of compactly supported
$C_{\He}^{1,\alpha}(\W)$ functions stated in Proposition
\ref{equivProp}.
\end{ex}

Intrinsic graphs of intrinsic Lipschitz functions with extra
vertical H\"older regularity turn out to be well approximable at
all points and small scales by intrinsic Lipschitz graphs of a
special form, the \emph{Lipschitz flags}; see Proposition
\ref{p:contenutesSpPsi} for the precise statement. In the proof of
Theorem \ref{mainIntroVertical}, Lipschitz flags will play an
analogous role as vertical tangent planes did in the proof of
Theorem \ref{mainIntro}, so we start with some observations of
general nature that serve as a counterpart for Section
\ref{s:DefPrelim}.

\subsection{Approximation by Lipschitz flags}

\begin{definition}\label{d:FlagSurface}
We say that $F\subset \He^1$ is a \emph{Lipschitz flag} if there
exists a Euclidean Lipschitz map $\psi :\R \to \R$ such that $F$
is the intrinsic graph $\Phi(\W)$ of the map $\varphi : \W \to \V$
defined by
\begin{equation}\label{linkvarphipsi}
\varphi (y,t ) = \psi (y).
\end{equation}
\end{definition}

Lipschitz flags are bilipschitz equivalent to the parabolic plane.
This observation appeared already in \cite[Lemma
7.5]{fssler2019singular}, but we include the proof for completeness.

\begin{lemma}\label{flagimpliesbilip} If $F\subset \He^1$ is a
Lipschitz flag given by an $L$-Lipschitz function $\psi :\R\to
\R$, then there is a $\sim (1+L)$-bilipschitz map
\begin{equation*}
\Psi _F: (\W, d_\He) \to (F, d_\He).
\end{equation*}
\end{lemma}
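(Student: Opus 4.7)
The plan is to construct $\Psi_F$ as an explicit parametrization of $F$ over the $yt$-plane $\W$ with a carefully chosen vertical correction. Expanding the group law one sees that $F=\Phi(\W)=\{(\psi(y),y,s):y,s\in\R\}$, so the formula
\begin{displaymath}
\Psi_F(0,y,t):=\bigl(\psi(y),\,y,\,t+h(y)\bigr),\qquad h(y):=\int_0^y\psi(s)\,ds-\tfrac{1}{2}y\psi(y),
\end{displaymath}
defines a bijection $\W\to F$. The motivation is that when $\psi$ is $C^1$ the curve $y\mapsto(\psi(y),y,h(y))$ is the horizontal characteristic of $F$ through the origin, as the tangency condition $h'(y)=\tfrac{1}{2}(\psi(y)-y\psi'(y))$ shows; the integral formula continues to make sense for merely continuous $\psi$.

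By the left-invariance of $d_\He$, it suffices to control the Kor\'anyi norm of $\Psi_F(0,y_2,t_2)^{-1}\cdot\Psi_F(0,y_1,t_1)$. Writing $a:=|y_1-y_2|$ and $b:=|t_1-t_2|$, the group law in $\He^1$ produces a point whose horizontal part has Euclidean length in $[a,\sqrt{1+L^2}\,a]$ by the $L$-Lipschitz hypothesis on $\psi$, and whose $t$-component simplifies to
\begin{displaymath}
D=(t_1-t_2)+E,\qquad E:=\int_{y_2}^{y_1}\psi(s)\,ds-\tfrac{y_1-y_2}{2}\bigl(\psi(y_1)+\psi(y_2)\bigr).
\end{displaymath}
The essential algebraic observation is that the cross-term $\tfrac{1}{2}(y_2\psi(y_1)-y_1\psi(y_2))$ produced by the group law --- which in general grows unboundedly in $\max(|y_1|,|y_2|)$ and would destroy any global bilipschitz estimate --- cancels exactly against the corresponding part of $h(y_1)-h(y_2)$, leaving the trapezoidal-rule remainder $E$. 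Rewriting $E=\int_{y_2}^{y_1}\bigl(\psi(s)-\tfrac{\psi(y_1)+\psi(y_2)}{2}\bigr)\,ds$ and applying the Lipschitz bound on $\psi$ yields $|E|\leq\tfrac{L}{2}a^2$.

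Once these two estimates are in hand, both sides of the bilipschitz inequality follow from a two-case split. In the \emph{vertical regime} $b\geq La^2$ one has $|D|\geq b/2$, and the fourth power of the Kor\'anyi norm of the displacement dominates $a^4+4b^2$, which is comparable to $d_\He((0,y_1,t_1),(0,y_2,t_2))^4=a^4+16b^2$ with absolute constants. In the \emph{horizontal regime} $b<La^2$ one has $d_\He((0,y_1,t_1),(0,y_2,t_2))\lesssim(1+L)^{1/2}a$, while the horizontal part of the displacement alone gives $\|\cdot\|\geq a$. The upper Lipschitz bound follows by a parallel computation that uses $(b+\tfrac{L}{2}a^2)^2\lesssim b^2+L^2a^4$ together with $(1+L^2)^2\lesssim(1+L)^4$. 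Combining all estimates produces a bilipschitz constant $\sim 1+L$, as claimed. The only nontrivial ingredient is the exact cancellation that isolates the trapezoidal remainder $E$; the remainder is routine Kor\'anyi-norm bookkeeping.
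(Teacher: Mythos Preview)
Your proof is correct and follows essentially the same approach as the paper: the map $\Psi_F$ you write down is exactly the paper's map, and the crucial algebraic cancellation that reduces the $t$-component to the trapezoidal-rule remainder $E$ (the paper writes it as $\int_{y}^{y'}\bigl(\tfrac{\psi(\eta)-\psi(y)}{2}+\tfrac{\psi(\eta)-\psi(y')}{2}\bigr)\,d\eta$) is the same computation. The only difference is cosmetic: you finish with a two-regime case split on $b$ versus $La^2$, whereas the paper bounds both directions directly via the triangle inequality on $|t'-t|$; both routes give a bilipschitz constant $\sim 1+L$.
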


\begin{proof}
Let $F$ be a Lipschitz flag, so $F$ is the intrinsic graph
$\Phi(\W)$ of the map $\varphi : \W \to \V$ defined as in
\eqref{linkvarphipsi} for the Euclidean L-Lipschitz function $\psi
:\R \to \R$. We will show that the map $\Psi _F: (\W, d_\He) \to
(F, d_\He)$ given by
\begin{equation}\label{eq:psi_F}
\begin{aligned}
\Psi _F (y,t) : = \left(\psi (y), y, t- \frac 1 2 y \psi (y) + \int_0^y \psi (\eta ) \, d \eta \right)\\
\end{aligned}
\end{equation}
is the $\sim (1+L)$-bilipschitz map which we are looking for.
Firstly, we observe that
\begin{equation*}
\Psi _F (y,t)  = \Phi \left(y, t + \int_0^y \psi (\eta ) \, d \eta \right),\\
\end{equation*}
where $\Phi$ is the graph map\footnote{The map $\Psi_F$ is the
composition of the graph map $\Phi$ with the ``characteristic
straightening map'' mentioned in the introduction. First, the
characteristic straightening map sends horizontal lines in
$\mathbb{W}$ to integral curves of $\nabla^{\varphi}= \partial_y +
\psi(y)\partial_t$, and these integral curves are then mapped by
$\Phi$ to the horizontal curves that foliate the flag $F$.} of
$\varphi$, hence $\Psi_F(\W)=\Phi(\W)$. Moreover, since $\psi$ is
an $L$-Lipschitz function, we get
\begin{equation}\label{bilipproof}
\begin{aligned}
d_\He & (\Psi _F (y,t), \Psi _F (y',t') )\\& = \left\| \left(\psi (y')-\psi (y),y'- y, t'- t+ \tfrac{ 1}{ 2} (y-y' )(\psi (y')+\psi (y)) + \int_{y}^{y'} \psi (\eta ) \, d \eta \right) \right\|\\
& = \left\| \left(\psi (y')-\psi (y),y'- y, t'- t + \int_{y}^{y'} \left(\tfrac{\psi (\eta ) -\psi (y)}{2} \right)+ \left(\tfrac{\psi (\eta ) -\psi (y')}{2} \right) \, d \eta \right) \right\|\\
& \lesssim (1+L) \left(|y'-y | + \sqrt{ |t'-t|}\right),
\end{aligned}
\end{equation}
for all $(y,t), (y',t') \in \W$. On the other hand, since
\begin{equation*}
\begin{aligned}
|t'-t| \leq \left| t'- t + \int_{y}^{y'} \left(\tfrac{\psi (\eta )
-\psi (y)}{2} \right)+ \left(\tfrac{\psi (\eta ) -\psi (y')}{2}
\right) \, d \eta \right| + \left| \int_{y}^{y'} \left(\tfrac{\psi
(\eta ) -\psi (y)}{2} \right)+ \left(\tfrac{\psi (\eta ) -\psi
(y')}{2} \right) \, d \eta \right|,
\end{aligned}
\end{equation*}
it follows
\begin{equation*}
\begin{aligned}
d_\He ( (y,t), (y',t'))  \lesssim |y-y'| + |t-t'|^{1/2} \lesssim
(1+L) d_{\He}(\Psi _F (y,t), \Psi _F (y',t') ),\end{aligned}
\end{equation*}
for all $(y,t), (y',t') \in \W$. Hence, putting together
\eqref{bilipproof} and the last inequality, we get that $\Psi _F$
is a $\sim(1+L)$-bilipschitz map, as desired.
\end{proof}

Given an intrinsic Lipschitz graph $S\subset \He^1$, we will
define for each $p\in S$ a Lipschitz flag that intersects
$S$ in a Lipschitz curve passing through $p$. We start with some
general definitions that will be used throughout this section. We
state them in terms of the intrinsic Lipschitz function
$\varphi^{(p^{-1})}$, whose intrinsic graph is $p^{-1}\cdot S$,
recall \eqref{form34} .

\begin{definition}\label{d:mappsi_p} Let $S=\Phi(\W)=\{ w\cdot \varphi (w)\,:\, w\in \W \}$ be the intrinsic graph of an intrinsic $L$-Lipschitz function $\varphi : \W \to \V$. To each point $p\in S$, we associate the function
\begin{equation}\label{defipsi_P}
\begin{aligned}
\psi _p :\R & \longrightarrow \R\\
 s & \longmapsto \varphi ^{(p^{-1})} (s, \tau _p(s)),
\end{aligned}
\end{equation}
where $\tau_p :\R \to \R $ is some solution of the Cauchy problem
 \begin{equation}\label{Cauchy problem}
\left\{
\begin{array}{lcl}
\dot{\tau}_p(s)= \varphi ^{(p^{-1})} ( s, \tau _p(s)), \quad \mbox{for } s\in \R  \\
\tau _p (0)=0. \\
\end{array}
\right.
\end{equation}
\end{definition}

Note that $\varphi  ^{(p^{-1})} $ is intrinsic Lipschitz with the
same constant as $\varphi$, and then the global existence of
$\tau_p $ follows as in \cite[(6.27)]{antonelli2019pauls}.
Proposition 6.10 in \cite{antonelli2019pauls} is only for higher
dimensions, but this part of the argument works also for our
setting $\He^1$. Moreover, using the same proof as for
\cite[(6.30)]{antonelli2019pauls}, it follows that $\tau _p$
satisfies the inequality
\begin{equation}\label{(6.31)new}
|\tau _p (s) | \lesssim _L |s|^2, \qquad s\in \R.
\end{equation}
  Notice further that $\gamma_p : s \mapsto (0,s,\tau_p(s))$ is a $C^1$ curve with
 \begin{equation*}
\dot \gamma_p (s)= \begin{pmatrix} 0\\ 1\\ \varphi
^{(p^{-1})}(\gamma_p (s))
\end{pmatrix}= D_2^{\varphi^{(p^{-1})}}|_{\gamma_p(s)}, \end{equation*}
where we recall that $D^{\varphi}_{2}$ is the vector field $D^{\varphi}_{2} = \partial_{y} + \varphi \partial_{t}$. As a consequence, from  \cite[Proposition 6.6]{antonelli2019pauls} it also follows that
 $ s  \mapsto \psi_p(s)=\varphi ^{(p^{-1})} (\gamma_p (s))$ is Euclidean Lipschitz with Lipschitz constant depending only
on the intrinsic Lipschitz constant of $\varphi$. The solution $\tau_p$ may not be unique, but
we never need other properties of it than the ones described
above, so any choice will do. For completeness, we also mention
that $\Phi^{(p^{-1})}\circ \gamma_p$ is a Lipschitz curve in
$(\He^1,d_{\He})$ by \cite[Theorem
4.2.16]{kozhevnikov:tel-01178864}.

\begin{definition}\label{d:flagsurfaceF_p}
Let $S= \Phi(\W)$ be the intrinsic graph of an intrinsic Lipschitz function $\varphi : \W \to \V$. For each point $p\in S$, we define
\begin{equation}\label{flagsurfaceF_p}
F_p:= \left\{ \left(0,y,t \right) \cdot \left(\psi_p(y) ,0,0
\right)\,:\, (y,t) \in \R^2 \right\}.
\end{equation}
\end{definition}

We note the following properties of $F_p$ defined as in
\eqref{flagsurfaceF_p}:
\begin{enumerate}
\item $F_p$ is a Lipschitz flag, \item $0\in F_p$, \item $p\cdot
F_p$ is also a Lipschitz flag, \item $F_p \cap (p^{-1}\cdot
S)\supseteq \Phi^{(p^{-1})}( \gamma_p(\R))$.
\end{enumerate}

Item (1) follows immediately from the fact that
$\psi_p$ is a Euclidean Lipschitz function. The item (2) follows
since $p\in S$, and so $\psi_p(0)=\varphi ^{(p^{-1})}(0,0)=0$.
Next, (3) follows by computing explicitly that
\begin{displaymath} p \cdot F_{p} = \{(0,y,t) \cdot (\psi(y),0,0) : (y,t) \in \R^{2}\}, \end{displaymath}
 where $\psi(y) := \psi_{p}(y - y_{p}) + x_{p}$ and $p = (x_{p},y_{p},t_{p})$. Since $\psi_p:\mathbb{R}\to\mathbb{R}$ is Lipschitz, so is $\psi$. Finally, (4) is clear from the
definitions since $\psi_p(y)=\varphi^{(p^{-1})}(y,\tau_p(y))$ and
$F_p$ is of the form \eqref{flagsurfaceF_p}.

\begin{definition}\label{d:Psi_p}
Let $S= \Phi(\W) $ be the intrinsic graph of an intrinsic
Lipschitz function $\varphi : \W \to \V$. For each point $p\in S$,
we let $\Psi_p:=\Psi_{F_p}:\W \to F_p$ be the map given by the
formula \eqref{eq:psi_F} applied to the Lipschitz flag $F=F_p$
from \eqref{flagsurfaceF_p}, that is,
\begin{equation}\label{defiPsi_p}
\begin{aligned}
\Psi_p(y,t)=  \left(\varphi ^{(p^{-1})} ( y, \tau _p(y)) , y, t-
\frac 1 2 y \varphi ^{(p^{-1})} ( y, \tau _p(y)) + \int_0^y
\varphi ^{(p^{-1})} ( \eta , \tau _p(\eta )) \, d \eta \right).
\end{aligned}
\end{equation}
\end{definition}

The reader may think of $\Psi_p$ as a surrogate
 for the bilipschitz map defined in
 \eqref{eq:Psi_p}, which sends $\mathbb{W}$ to the vertical plane
 $\mathbb{W}_p$. In this analogy,
the Lipschitz flag $F_p$ plays the role of $\mathbb{W}_p$. To
emphasise this conceptual similarity, we decided to use again the
symbol ``$\Psi_p$'' in Definition \ref{d:Psi_p}. The analogy is
however not perfect: if $\varphi^{(p^{-1})}$ is not intrinsic
linear, the map $\Psi_p$ from Definition \ref{d:Psi_p} is not a
group homomorphism and hence we lack a counterpart for the chain
rule \eqref{chain}, which we proved for the map defined in
\eqref{eq:unitary_Tan}.

The following properties of $\Psi_{F_p}=\Psi_p$ defined as in
\eqref{defiPsi_p} will be used:
\begin{enumerate}
\item $\Psi _p(0)=0$ since $\varphi ^{(p^{-1})} (0,0)=0$, \item
$\Psi_p (y,t)= \Phi ^{(p^{-1})} (y,\tau_p(y)) \cdot (0,0,t)$, as
$\int_0^y \varphi ^{(p^{-1})} ( \eta , \tau _p(\eta )) \, d \eta=
\int_0^y  \dot{\tau}_p(\eta)\,d\eta = \tau_p(y)$, \item $\Psi_p$
is bilipschitz with a constant that depends only on the intrinsic
Lipschitz constant of $\varphi$ (by Lemma \ref{flagimpliesbilip}
and the paragraph before Definition \ref{d:Psi_p}).
\end{enumerate}

\begin{remark}\label{r:WhatItMeansForFlags}
If $S= \Phi(\W) $ is itself a Lipschitz flag, that is, the
intrinsic graph of an intrinsic Lipschitz
function $\varphi : \W \to \V$ that does not depend on the
$t$-variable, then
\begin{equation}\label{eq:FlagParam}
\Psi_p(y,t)= \Phi^{(p^{-1})}\left(y,t+ \int_0^y
\varphi^{(p^{-1})}(\eta,\tau_p(\eta))\,d\eta\right),
\end{equation}
and in particular, $\Psi_p(\W)= \Phi^{(p^{-1})}(\W)$ and  hence
$S= p \cdot \Psi_p(\W)$ in that case. Also note that here
$\varphi^{(p^{-1})}$ does not depend on the $t$-variable, and so
the integral in \eqref{eq:FlagParam} can be written without the
dependence on $\tau_p$.
\end{remark}

As we noted below Definition \ref{d:flagsurfaceF_p}, the surfaces
$F_p=\Psi_p(\W)$ and $p^{-1}\cdot S = \Phi^{(p^{-1})}(\W)$
intersect at least along a curve. The next lemma shows that they
approximate each other well also in a neighborhood of that curve
if $\varphi$ has extra vertical H\"older regularity.

\begin{lemma}\label{betterProp3.36}
Let $\varphi : \W \to \V $ be an intrinsic Lipschitz function with
extra vertical H\"older regularity with constants $0<\alpha\leq 1$
and $H>0$. Then
\begin{displaymath}
d_\He ( \Psi_p (y,t), \Phi ^{(p^{-1})} (y,\tau_p(y) +t ) ) \leq H
|t|^{(1+\alpha)/2} \leq H\|(y,t)\|^{1 + \alpha}, \qquad \, (y,t) \in \R^2.
\end{displaymath}

\end{lemma}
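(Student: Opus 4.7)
The plan is to unwind the definitions of $\Psi_p$ and $\Phi^{(p^{-1})}$ so that both sides differ only in the \emph{horizontal} coordinate, after which the distance becomes a one-dimensional quantity that is directly controlled by the extra vertical H\"older regularity of $\varphi^{(p^{-1})}$.

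First, I would use property (2) of $\Psi_p$ recorded just after Definition \ref{d:Psi_p}, namely
\begin{displaymath}
\Psi_p(y,t) = \Phi^{(p^{-1})}(y,\tau_p(y)) \cdot (0,0,t),
\end{displaymath}
and rewrite the right hand side. Writing $a := \varphi^{(p^{-1})}(y,\tau_p(y))$ and using that the $t$-axis is the center of $\He^1$, I can move the factor $(0,0,t)$ past $(a,0,0)$ to obtain
\begin{displaymath}
\Psi_p(y,t) = (0,y,\tau_p(y)) \cdot (a,0,0) \cdot (0,0,t) = (0,y,\tau_p(y)+t) \cdot (a,0,0).
\end{displaymath}
On the other hand, by the very definition of the graph map,
\begin{displaymath}
\Phi^{(p^{-1})}(y,\tau_p(y)+t) = (0,y,\tau_p(y)+t) \cdot (b,0,0), \qquad b := \varphi^{(p^{-1})}(y,\tau_p(y)+t).
\end{displaymath}

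Next, using the left-invariance of $d_\He$ with the common left factor $(0,y,\tau_p(y)+t)$, and the explicit computation $(b,0,0)^{-1} \cdot (a,0,0) = (a-b,0,0)$, I would conclude
\begin{displaymath}
d_\He\bigl(\Psi_p(y,t),\Phi^{(p^{-1})}(y,\tau_p(y)+t)\bigr) = \|(a-b,0,0)\| = |a-b| = |\varphi^{(p^{-1})}(y,\tau_p(y)) - \varphi^{(p^{-1})}(y,\tau_p(y)+t)|.
\end{displaymath}

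At this point the extra vertical H\"older regularity of $\varphi^{(p^{-1})}$, which is inherited from that of $\varphi$ by Remark \ref{r:VertHolDilateTranslate} with the same constants $\alpha$ and $H$, yields $|a-b| \leq H|t|^{(1+\alpha)/2}$, giving the first inequality of the lemma. The second inequality is immediate from the definition of the Kor\'anyi norm: since $\|(y,t)\| = \sqrt[4]{y^4+16t^2} \geq 2|t|^{1/2}$, one has $|t|^{(1+\alpha)/2} \leq 2^{-(1+\alpha)}\|(y,t)\|^{1+\alpha} \leq \|(y,t)\|^{1+\alpha}$. There is essentially no obstacle here; the only delicate point is the algebraic step of moving $(0,0,t)$ through a horizontal factor, which works precisely because $(0,0,t)$ lies in the center of $\He^1$, so the lemma is really a computational consequence of Definitions \ref{d:mappsi_p}--\ref{d:Psi_p} together with the Hölder hypothesis \eqref{eq:VertHolDef}.
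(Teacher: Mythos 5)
Your proposal is correct and follows essentially the same route as the paper: both start from the identity $\Psi_p(y,t)=\Phi^{(p^{-1})}(y,\tau_p(y))\cdot(0,0,t)$, reduce the distance by left-invariance to $|\varphi^{(p^{-1})}(y,\tau_p(y)+t)-\varphi^{(p^{-1})}(y,\tau_p(y))|$, and apply the (left-invariant) extra vertical H\"older regularity. Your use of the centrality of the $t$-axis is just a cleaner packaging of the paper's direct group computation, and your verification of the second inequality (which the paper omits) is also correct.
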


\begin{remark}\label{r:largeScale}From the proof of Lemma \ref{betterProp3.36} one can
 infer that if $\varphi$ has extra vertical H\"older regularity in the stronger sense of \eqref{eq:all_Scale}, then
\begin{equation*}
d_\He ( \Psi_p (y,t), \Phi ^{(p^{-1})} (y,\tau_p(y) +t ) ) \leq
H'\min\{ |t|^{(1+\alpha)/2}, |t|^{(1-\alpha)/2}\}, \quad \mbox{
for }\, (y,t) \in \R^2.
\end{equation*}
\end{remark}

\begin{proof}[Proof of Lemma \ref{betterProp3.36}]
Recall that $\tau_p$ is a solution of the Cauchy problem
\eqref{Cauchy problem},  and we have that
\begin{equation*}
\begin{aligned}
\Psi_p (y,t) =\Phi ^{(p^{-1})} (y,\tau_p(y)) \cdot (0,0,t)
\end{aligned}
\end{equation*}
for all  $(y,t) \in \R^2$. As a consequence,
\begin{equation*}
\begin{aligned}
& d_\He ( \Psi_p (y,t), \Phi ^{(p^{-1})} (y,\tau_p(y) +t ) ) = d_\He ( \Phi ^{(p^{-1})} (y,\tau_p(y)) \cdot (0,0,t) , \Phi ^{(p^{-1})} (y,\tau_p(y) +t ) ) \\
& = \|\varphi ^{(p^{-1})} ( y, \tau _p(y) +t )^{-1}\cdot  (0, y, \tau _p(y) +t )^{-1} \cdot  (0, y, \tau _p(y)) \cdot  \varphi ^{(p^{-1})} ( y, \tau _p(y)) \cdot (0,0,t)  \|\\
& = | \varphi ^{(p^{-1})} ( y, \tau _p(y) + t ) -  \varphi
^{(p^{-1})} ( y, \tau _p(y))| \leq H |t|^{\frac{1+\alpha}{2}},
\end{aligned}
\end{equation*}
as claimed.
\end{proof}

In the following we denote by $[A]_{\delta}^{\He^1}$ the
$\delta$-neighbourhood of a set $A\subset \He^1$ with
respect to $d_{\He}$, and $[B]^{\mathbb{R}^2}_{\delta}$ stands for
the Euclidean $\delta$-neighbourhood of a set $B \subset
\mathbb{R}^2$. A direct corollary of Lemma \ref{betterProp3.36} is
the following result:

\begin{proposition}\label{p:contenutesSpPsi}
 Let $\varphi : \W \to \V $ be an intrinsic $L$-Lipschitz function with extra vertical H\"older regularity with constants $0<\alpha\leq 1$
 and $H>0$.
Then, there is $c=c(L)\geq 1$ such that for all $r>0$ and all
$p\in S=\Phi(\W)$ it follows
 \begin{enumerate}
\item $S \cap  B(p,r) \subset [p\cdot \Psi
_p(\W)]_{c\delta}^{\He^1}$, \item $\left(p\cdot \Psi _p(\W)
\right)\cap B(p,r) \subset [S]_{c\delta}^{\He^1}$,
\end{enumerate}
where $\delta := \delta(H,r) := Hr^{1+\alpha}$.
\end{proposition}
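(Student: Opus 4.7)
The plan is to reduce both inclusions to statements centered at the origin by the left-invariance of $d_\He$, and then deploy Lemma \ref{betterProp3.36} as essentially the only substantive input. After translating by $p^{-1}$, inclusion (1) becomes $\Phi^{(p^{-1})}(\W) \cap B(0,r) \subset [\Psi_p(\W)]^{\He^1}_{c\delta}$, and (2) becomes $\Psi_p(\W) \cap B(0,r) \subset [\Phi^{(p^{-1})}(\W)]^{\He^1}_{c\delta}$. Since $\varphi^{(p^{-1})}$ inherits from $\varphi$ the intrinsic $L$-Lipschitz property (with $\varphi^{(p^{-1})}(0)=0$) and the extra vertical H\"older regularity with the same constants (Remark \ref{r:VertHolDilateTranslate}), every implicit constant below depends only on $L$.

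For (1), I would pair an arbitrary $q = \Phi^{(p^{-1})}(y',t') \in B(0,r)$ with the point $\Psi_p(y',\,t'-\tau_p(y'))$. Since $\tau_p(y') + (t'-\tau_p(y')) = t'$, Lemma \ref{betterProp3.36} immediately yields
\begin{equation*}
d_\He\!\left(\Psi_p(y', t'-\tau_p(y')),\, q\right) \leq H\,|t'-\tau_p(y')|^{(1+\alpha)/2}.
\end{equation*}
It then suffices to bound $|t'-\tau_p(y')|$ by a constant multiple of $r^2$. Writing $q = (\varphi^{(p^{-1})}(y',t'),\, y',\, t'-\tfrac{y'}{2}\varphi^{(p^{-1})}(y',t'))$ and extracting coordinate information from $\|q\|\le r$ gives $|y'|,|\varphi^{(p^{-1})}(y',t')|\leq r$ and $|t'-\tfrac{y'}{2}\varphi^{(p^{-1})}(y',t')|\leq r^2/4$. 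These combine to yield $|t'|\lesssim r^2$, and then \eqref{(6.31)new} provides $|\tau_p(y')|\lesssim_L|y'|^2\leq r^2$. Hence $|t'-\tau_p(y')|\lesssim_L r^2$, and so $H|t'-\tau_p(y')|^{(1+\alpha)/2}\lesssim_L Hr^{1+\alpha} = \delta$, as required.

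For (2), I would symmetrically pair $\Psi_p(y,t)\in B(0,r)$ with $\Phi^{(p^{-1})}(y,\,\tau_p(y)+t) \in \Phi^{(p^{-1})}(\W)$, whose $d_\He$-distance from $\Psi_p(y,t)$ is again at most $H|t|^{(1+\alpha)/2}$ by Lemma \ref{betterProp3.36}. To bound $|t|$, I would use the explicit identity $\Psi_p(y,t) = \Phi^{(p^{-1})}(y,\tau_p(y)) \cdot (0,0,t)$ (property (2) after Definition \ref{d:Psi_p}): the same Kor\'anyi bookkeeping on $\|\Psi_p(y,t)\|\le r$ gives $|y|\leq r$, $|\varphi^{(p^{-1})}(y,\tau_p(y))|\leq r$, and $|\tau_p(y)+t - \tfrac{y}{2}\varphi^{(p^{-1})}(y,\tau_p(y))|\leq r^2/4$, whence $|\tau_p(y)+t|\lesssim r^2$, and then \eqref{(6.31)new} delivers $|t|\lesssim_L r^2$, so $H|t|^{(1+\alpha)/2} \lesssim_L \delta$. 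No genuine obstacle appears in either direction; the proof is a clean application of Lemma \ref{betterProp3.36} combined with the universal observation that a point of Kor\'anyi norm at most $r$ has horizontal coordinates of size $O(r)$ and vertical coordinate of size $O(r^2)$.
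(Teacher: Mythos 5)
Your proof is correct and follows essentially the same route as the paper's: translate to the origin by left-invariance, read off $|y|\lesssim r$ and the vertical coordinate bound $\lesssim r^2$ from the Kor\'anyi norm, invoke \eqref{(6.31)new} to control $\tau_p$, and apply Lemma \ref{betterProp3.36}. The only difference is a trivial reparametrization in part (1) (you write the point as $\Phi^{(p^{-1})}(y',t')$ and set $t=t'-\tau_p(y')$, whereas the paper writes it directly as $\Phi^{(p^{-1})}(y,\tau_p(y)+t)$), which changes nothing in substance.
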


\begin{remark}
Continuing Remark \ref{r:largeScale}, we note that if an intrinsic
$L$-Lipschitz function $\varphi$ has extra vertical H\"older
regularity with constants $0<\alpha\leq 1$
and $H'>0$ in the stronger sense \eqref{eq:all_Scale}, then the conclusion of
Proposition \eqref{p:contenutesSpPsi} can be improved by replacing
"$\delta$" with $H'\min\{r^{1+\alpha},r^{1-\alpha}\}$. In other
words, the intrinsic graph of $\varphi$ is well approximated by
Lipschitz flags also at large scales.
\end{remark}

\begin{proof}[Proof of Proposition \ref{p:contenutesSpPsi}]
Fix $r>0$ and $p\in S$ as in the assumptions of the proposition.
Since the metric $d_\He$ is left invariant, it  is sufficient to
show that there is $c=c(L)\geq 1$ such that
\begin{enumerate}
\item[(i)]  $\left( p^{-1} \cdot S\right) \cap  B(0,r) \subset [
\Psi _p(\W)]_{c\delta }^{\He^1}$, \item[(ii)]   $\Psi _p(\W) \cap
B(0,r) \subset [p^{-1} \cdot S]_{c\delta }^{\He^1}$.
\end{enumerate}

We consider (i). Let $q\in (p^{-1} \cdot S) \cap  B(0,r)$. We will
prove that $q \in [ \Psi _p(\W)]_{c\delta }^{\He^1}$ for a
constant $c$ depending only on $L$. Firstly, since $q\in \Phi
^{(p^{-1})} (\W) \cap B(0,r)$, we have that $q= \Phi ^{(p^{-1})}
(0,y, \tau _p(y) + t )$ for some $(y,t) \in \R^2$ and $\| q\| = \|
\Phi ^{(p^{-1})} (y, \tau _p(y) + t ) \| \leq r$. More precisely,
by the definition of $\Phi ^{(p^{-1})}$, we find
\begin{equation*}
\begin{aligned}
|\varphi ^{(p^{-1})} ( y, \tau _p(y) +t )| & \leq r,\quad |y| \leq
r,\quad\text{and}\quad
 \left|\tau_p(y)+t - \tfrac{ 1}{ 2 }y \varphi ^{(p^{-1})} ( y, \tau _p(y) +t ) \right|   \leq \tfrac{r^2}{4},\\
\end{aligned}
\end{equation*}
\begin{equation}\label{stimatsquare}
\begin{aligned}
|t|\leq   \left|\tau_p(y)+t - \tfrac {1 }{2} y \varphi ^{(p^{-1})}
( y, \tau _p(y) +t ) \right|+|\tau_p(y)|
+  \left| \tfrac {1}{ 2} y \varphi ^{(p^{-1})} ( y, \tau _p(y) +t )\right|  & \stackrel{\eqref{(6.31)new}}{\leq}C_Lr^2+ \tfrac{3r^2}{4}.\\
\end{aligned}
\end{equation}
Now applying Lemma \ref{betterProp3.36} to the point $(y,t)$, we
obtain
 \begin{equation*}
 \begin{aligned}
d_\He ( \Psi_p (y,t), q ) & = d_\He ( \Psi_p (y,t), \Phi
^{(p^{-1})} (y,\tau_p(y) +t ) )\leq H|t|^{(1+\alpha)/2}
\stackrel{\eqref{stimatsquare}}{\lesssim}_L \delta,
\end{aligned}
\end{equation*}
so $q \in [ \Psi _p(\W)]_{c\delta }^{\He^1}$, as desired.

Next we consider (ii). Let $q\in \Psi _p(\W) \cap  B(0,r)$. We
want to prove that $q \in [p^{-1} \cdot S]_{c \delta }^{\He^1}$
for a constant $c$ that depends only on $L$. Since $q= \Psi_p (y,
t ) = \Phi ^{(p^{-1})} (y, \tau _p(y) ) \cdot (0,0,t)$ for some
$(y,t) \in \R^2$ and $\|q\| \leq r$, we have that
\begin{equation*}
\begin{aligned}
|\varphi ^{(p^{-1})} ( y, \tau _p(y) )|  \leq r,\quad |y|  \leq
r,\quad\text{and}\quad
 \left|t+ \tau _p(y) - \tfrac{1}{ 2} y \varphi ^{(p^{-1})} ( y, \tau _p(y) ) \right| \leq \tfrac{r^2}{4}.
\end{aligned}
\end{equation*}
and so
\begin{equation}\label{stimatsquare2}
\begin{aligned}
|t| \leq  \left|t +\tau _p(y) - \tfrac{y}{ 2}  \varphi ^{(p^{-1})}
( y, \tau _p(y) ) \right|  + |\tau _p(y)| + \left| \tfrac{y}{ 2}
\varphi ^{(p^{-1})} ( y, \tau _p(y) ) \right|
\stackrel{\eqref{(6.31)new}}{\leq}
 \tfrac{3r^2}{4} + C_L r^2.\\
\end{aligned}
\end{equation}
Now we apply Lemma \ref{betterProp3.36} to the point $(y,t)$,
hence
 \begin{equation*}
 \begin{aligned}
d_\He ( q , \Phi ^{(p^{-1})} (y,\tau_p(y) +t ))  = d_\He ( \Psi_p
(y,t), \Phi ^{(p^{-1})} (y,\tau_p(y) +t ) )
 \leq H|t|^{(1+\alpha)/2} \stackrel{\eqref{stimatsquare2}}{
\lesssim _{L}}  \delta,
\end{aligned}
\end{equation*}
so $q \in [p^{-1} \cdot S]_{c \delta }^{\He^1}$ for $c$ depending
only on $L$, as desired. This completes the proof.
\end{proof}

\medskip

Let $\pi \colon \He^{1} \to \R^{2}$ be the projection $\pi(x,y,t)=(x,y)$. Then $\pi$ is $1$-Lipschitz $(\He^{1},d_{\He}) \to (\R^{2},|\cdot|)$, which easily implies the following statement:

\begin{lemma}\label{l:Notes3}
Assume that $A_1,A_2\subset \He^1$, $p\in \He^1$, $r>0$, and
$\delta>0$ are such that \begin{displaymath} A_1 \cap B(p,r)
\subset [A_2]_{\delta}^{\He^1},\end{displaymath} then
\begin{displaymath}
\pi(A_1 \cap B(p,r))\subset [\pi(A_2 \cap
B(p,r+\delta))]^{\mathbb{R}^2}_{\delta}.
\end{displaymath}
\end{lemma}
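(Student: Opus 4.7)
The plan is very short: this lemma is a direct consequence of the $1$-Lipschitz property of $\pi \colon (\He^1, d_{\He}) \to (\R^2, |\cdot|)$ combined with the triangle inequality in $(\He^1, d_{\He})$.

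First I would fix an arbitrary point $q \in A_1 \cap B(p,r)$ and produce a corresponding point $q' \in A_2$ whose $\He^1$-distance to $q$ is at most $\delta$ (strictly less if the neighbourhood $[\cdot]_{\delta}^{\He^1}$ is taken to be open; in either case a harmless approximation argument works). The existence of $q'$ is guaranteed by the hypothesis $A_1 \cap B(p,r) \subset [A_2]_{\delta}^{\He^1}$.

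Next I would verify that $q'$ lies in the enlarged ball $B(p, r+\delta)$: by the triangle inequality, $d_{\He}(p, q') \leq d_{\He}(p,q) + d_{\He}(q, q') < r + \delta$. Hence $q' \in A_2 \cap B(p, r+\delta)$, so $\pi(q') \in \pi(A_2 \cap B(p, r+\delta))$.

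Finally I would use that $\pi$ is $1$-Lipschitz: $|\pi(q) - \pi(q')| \leq d_{\He}(q,q') \leq \delta$, which means $\pi(q) \in [\pi(A_2 \cap B(p, r + \delta))]_{\delta}^{\mathbb{R}^2}$. Since $q \in A_1 \cap B(p,r)$ was arbitrary, the inclusion $\pi(A_1 \cap B(p,r)) \subset [\pi(A_2 \cap B(p, r+\delta))]_{\delta}^{\mathbb{R}^2}$ follows. There is essentially no obstacle here — the only minor care needed is to treat the open versus closed convention for the neighbourhood consistently, but any reasonable convention yields the same conclusion.
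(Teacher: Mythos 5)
Your argument is correct and is exactly the one the paper intends: the lemma is stated there without proof, preceded only by the observation that $\pi$ is $1$-Lipschitz from $(\He^{1},d_{\He})$ to $(\R^{2},|\cdot|)$, and your write-up simply fills in the triangle-inequality step that places the approximating point $q'$ inside $B(p,r+\delta)$. Nothing further is needed.
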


The lemma will applied with $A_{2} = F$, a Lipschitz flag. Then $\pi(F) \subset \R^{2}$ is a Lipschitz graph, and the lemma says that $\pi(A_1\cap B(p,r))$ is contained in the Euclidean $\delta$-neighbourhood of the Lipschitz graph $\pi(F)$ whenever if $A_1\cap B(p,r) \subset [F]_{\delta}^{\He^{1}}$.

\subsection{Proof for graphs with extra vertical H\"older regularity}

In this section, we prove Theorem \ref{mainIntroVertical} from the
introduction, which we restate here for the reader's convenience.

\begin{thm}\label{mainVertical} Let  $S \subset \He^1$
be the intrinsic graph of a globally defined but compactly
supported intrinsic Lipschitz function with extra vertical
regularity. Then $S$ has big pieces of bilipschitz images of the
parabolic plane $(\Pi,d_{\Pi})$. In particular, $S$ is LI
rectifiable.
\end{thm}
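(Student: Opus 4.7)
The plan is to follow the overall strategy of the proof of Theorem \ref{mainGraphs}: reduce to the unit scale and then verify the abstract hypotheses \eqref{ISO}--\eqref{comp} of Theorem \ref{main} with
\[
(G, d_G, \mu) := (\Pi, d_\Pi, \calL^2) \quad \text{and} \quad (M, d_M) := (S, d_\He).
\]
The reduction to the unit scale uses the dilation and translation invariance of the extra vertical H\"older regularity recorded in Remark \ref{r:VertHolDilateTranslate}, which plays the role of Lemma \ref{blowUpLemma} in the intrinsic $C^{1,\alpha}$ setting. The treatment of scales larger than $\diam_{\He}(\Phi(\spt \varphi))$ is identical to that at the end of Section \ref{ss:red_unit_scale}.

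For the unit-scale argument I would identify $\Pi$ with $\W$ via the isometry $(y,t) \mapsto (0,y,t)$, and define maps $i_{w \to p} \colon \Pi \to S$, independent of the scale index $n$, by
\[
i_{w \to p}(v) := q[p, w^{-1} \cdot v],
\]
where $q[p,u]$ denotes a measurable selection of a point in $S$ closest to $p \cdot \Psi_p(u)$, and $\Psi_p \colon \W \to F_p$ is the bilipschitz parametrization of the Lipschitz flag at $p$ from Definition \ref{d:Psi_p}. Lemma \ref{flagimpliesbilip} ensures that $\Psi_p$ is bilipschitz with a constant depending only on the intrinsic Lipschitz constant $L$ of $\varphi$, while Proposition \ref{p:contenutesSpPsi}(2) yields the fundamental error estimate
\[
d_\He\bigl(p \cdot \Psi_p(w^{-1} \cdot v),\, i_{w \to p}(v)\bigr) \lesssim_L H \|w^{-1} \cdot v\|^{1 + \alpha}.
\]
Combined with the left-invariance of $d_\He$ and the triangle inequality, this delivers the bilipschitz condition \eqref{ISO} by an argument modelled on the derivation of \eqref{form25} from \eqref{form18} in the $n > 1$ proof.

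The main obstacle is the compatibility condition \eqref{comp}. Crucially, $\Psi_p$ is \emph{not} a group homomorphism in this setting, so the convenient chain rule \eqref{chain} used to handle \eqref{comp} when $n > 1$ is unavailable. For $w, v \in \Pi$ with $\|w\|, \|v\| \lesssim 2^{-n}$ and $q = i_{0 \to p}(w)$, the triangle inequality combined with the error estimate above reduces the task to bounding
\[
d_\He\bigl(p \cdot \Psi_p(v),\, q \cdot \Psi_q(w^{-1} \cdot v)\bigr) \lesssim 2^{-n(1 + \alpha)}.
\]
Both points on the left lie in a common $d_\He$-ball of radius $\lesssim 2^{-n}$ around $p$, and both are $O(2^{-n(1 + \alpha)})$-close to $S$ by Proposition \ref{p:contenutesSpPsi}(1) applied separately at $p$ and at $q$. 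Since $\varphi$ is intrinsic Lipschitz, the intrinsic vertical projection $\pi_\W$ determines points on $S$ in a Lipschitz way, so it suffices to show that $\pi_\W$ of the two displayed points differ by $O(2^{-n(1 + \alpha)})$. The $y$-components match up to the error from $q \approx p \cdot \Psi_p(w)$, which is itself $O(2^{-n(1 + \alpha)})$; the delicate part is the $t$-component, where the comparison of $\Psi_p$ and $\Psi_q$ requires a stability estimate for the ODE-defined functions $\tau_p$ and $\tau_q$ from Definition \ref{d:mappsi_p} under the perturbation $p \to q$. The extra vertical H\"older regularity \eqref{eq:VertHolDef}, encoded in Lemma \ref{betterProp3.36}, is precisely the ingredient that promotes the crude $O(2^{-n})$-Lipschitz comparison to the required $O(2^{-n(1 + \alpha)})$ H\"older gain. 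Once \eqref{comp} is verified, Theorem \ref{main} yields a bilipschitz parametrization of a positive $\calL^2$-measure subset of $\W \cong \Pi$, from which the BP$(\Pi, d_\Pi)$BI conclusion follows.
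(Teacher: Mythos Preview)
Your overall strategy matches the paper's exactly: the reduction to unit scale, the definition of $i_{w\to p}$ via closest points to $p\cdot\Psi_p(w^{-1}\cdot v)$, the verification of \eqref{ISO} via the error bound from Lemma~\ref{betterProp3.36}, and the reduction of \eqref{comp} to bounding $d_\He(p\cdot\Psi_p(w_1^{-1}\cdot w_3),\,q\cdot\Psi_q(w_2^{-1}\cdot w_3))$ are all correct and are precisely what the paper does (this last reduction is the paper's Lemma~\ref{l:NotesLem5}).

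The gap is in your proposed mechanism for that final estimate. You write that ``the intrinsic vertical projection $\pi_\W$ determines points on $S$ in a Lipschitz way, so it suffices to show that $\pi_\W$ of the two displayed points differ by $O(2^{-n(1+\alpha)})$.'' This step fails: the two displayed points lie on the Lipschitz flags, not on $S$, and passing from closeness of $\pi_\W$-images to closeness in $d_\He$ would require the graph map $\Phi\colon\W\to S$ (or $\pi_\W$ itself) to be metrically Lipschitz, which it is not --- this is exactly the obstruction highlighted in the introduction. Even after pushing both points onto $S$ via the $O(2^{-n(1+\alpha)})$ error, comparing their $\pi_\W$-images requires controlling $\pi_\W$ on a $d_\He$-neighbourhood of $S$, and $\pi_\W$ is only H\"older there.

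What the paper does instead is to use the \emph{horizontal} projection $\pi\colon\He^1\to\R^2$, $\pi(x,y,t)=(x,y)$, which \emph{is} $1$-Lipschitz. Under $\pi$, the two flags $p\cdot\Psi_p(\W)$ and $q\cdot\Psi_q(\W)$ become Euclidean Lipschitz graphs $\gamma_p,\gamma_q\subset\R^2$, and Proposition~\ref{p:contenutesSpPsi} combined with Lemma~\ref{l:Notes3} forces these curves to stay $O(r^{1+\alpha})$-close at scale $r$. This yields a pointwise estimate for the first components of $\Psi_p$ versus $\Psi_q$ (the paper's \eqref{eq:bound_two_s}). The $t$-component comparison --- which you correctly flag as the delicate part --- is then handled by writing $\tau_p(s+y_2-y_1)-\tau_p(y_2-y_1)-\tau_q(s)$ as an integral of $\dot\tau_p-\dot\tau_q$ and feeding \eqref{eq:bound_two_s} back into the integrand. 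The ``ODE stability'' you allude to is thus obtained not from Lemma~\ref{betterProp3.36} directly, but from the projected-curve comparison; Lemma~\ref{betterProp3.36} enters only upstream, in Proposition~\ref{p:contenutesSpPsi}.
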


The theorem will be proven as an application of Theorem \ref{main}
and a reduction to unit scale analogous to the one after Lemma
\ref{blowUpLemma}. We will verify the hypotheses of Theorem
\ref{main} for $(G,d_G)=(\Pi,d_{\Pi})$ and $(M,d_M)=(S,d_{\He})$,
with $x_0=0\in G$ and an arbitrary point $p_0\in S$. Since the map
$(y,t)\mapsto (0,y,t)$ is an isometric isomorphism between
$(\Pi,+,d_{\Pi})$ and $(\W,\cdot,d_{\He})$, it suffices to
construct maps $i_{w\to p}:\W \to S$ with the desired properties.
As in Section \ref{verification}, the maps $i_{w\to p}$ will be independent of the "scale" parameter $k \in \N$, and can be defined for all $p\in S$ and all $w\in
\W$.

\begin{definition}\label{d:iotanew}
Let $S= \Phi(\W)$ be the intrinsic graph of an intrinsic Lipschitz
function $\varphi : \W \to \V$. For each point $p\in S$ and $u \in
\mathbb{W}$, let $q := q[p,u] \in S$ be any point satisfying
\begin{equation}\label{form31new} d_{\He}\left(p \cdot \Psi _p( u) , q    \right) =
\dist_{\He}\left(p \cdot \Psi _p( u) ,S\right). \end{equation}
Then, define $i_{w\to p} :\W \to S$  as
\begin{equation}\label{eq:i_2new}i_{w \to p}(v) := q[p,w^{-1}\cdot v].\end{equation}
\end{definition}
\begin{remark}\label{r:iForFlag}
Remark \ref{r:WhatItMeansForFlags} implies that if $S=\Phi(\W)$ is
itself a Lipschitz flag, then $i_{w\to p}(v)= p \cdot
\Psi_p(w^{-1}\cdot v)$ for all $v,w\in \W$. In general, the extra vertical H\"older regularity allows
to control the distance between $i_{w\to p}(v)$ and $p \cdot
\Psi_p(w^{-1}\cdot v)$.
\end{remark}
If $\varphi$ has extra vertical H\"older regularity with constants
$0<\alpha\leq 1$ and $H>0$, then Lemma \ref{betterProp3.36}
immediately implies that
\begin{align}\label{eq:Distance estimate} d_{\He}(p\cdot\Psi_p(w^{-1}\cdot w'),i_{w \to p}(w')) &= \dist_{\He}\left(\Psi_{p}(w^{-1} \cdot w'),p^{-1} \cdot S\right) \notag\\
& =
\dist_{\He}
\left(\Psi_p\left(w^{-1}\cdot
w'\right),\Phi^{(p^{-1})}(\W)\right) \notag\\& \leq Hd_{\He}(w,w')^{1 + \alpha},
\end{align}
for all $p\in S=\Phi(\W)$ and $w,w'\in
\W$.

Once again, $i_{w \to p}(v)$ does not depend on the points $v$ and
$w$ individually, but only on the product $w^{-1}\cdot v$, and by
definition $i_{w\to p}(w)=p$. To apply Theorem \ref{main}, we need to verify the two hypotheses \eqref{ISO} and \eqref{comp}. We start by showing that \eqref{ISO} holds for a
constant which depends only on the bilipschitz constant of
$\Psi_p$, which in turn depends only on the intrinsic Lipschitz
constant of $\varphi$, recall the comment below Definition
\ref{d:Psi_p}. Now \eqref{ISO} follows immediately
from \eqref{eq:Distance estimate} and the triangle inequality:
\begin{equation}\label{eq:FirstCondVertical}
|d_{\He}(i_{w\to p}(w'),i_{w\to p}(w''))-
d_{\He}(\Psi_p(w^{-1}\cdot w'),\Psi_p(w^{-1}\cdot w'))|\lesssim H
\max\{d_{\He}(w,w'),d_{\He}(w,w'')\}^{1+\alpha},
\end{equation}
for all $p\in S$ and $w,w',w''\in \W$. We proceed to verify
condition \eqref{comp} in our situation:

\begin{proposition}\label{p:NotesCor}
Let $\varphi:\W \to \V$ be an intrinsic $L$-Lipschitz function
that has extra vertical H\"older regularity with constants
$0<\alpha\leq 1$ and $H>0$. If $w_1,w_2 \in \W$ satisfy
$\|w_1\|,\|w_2\|\leq 1$, and $p,q\in \Phi(\W)$ satisfy $i_{w_1\to
p}(w_2)=q$, then
\begin{equation}\label{form29}
d_{\He}\left(i_{w_1 \to p}(w_3),i_{w_2 \to
q}(w_3)\right)\lesssim_{H,L}
\max\left\{d_{\He}(w_1,w_2),d_{\He}(w_1,w_3),d_{\He}(w_2,w_3)
\right\}^{1+\frac{\alpha}{2}}
\end{equation}
for all $w_3\in \W$
 with $d_{\He}(w_1,w_3)\leq 1$.
\end{proposition}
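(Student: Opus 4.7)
Following the template of the proof of \eqref{form27} in Section \ref{verification}, the identity $i_{w \to p}(v) = q[p, w^{-1}\cdot v]$ together with the abelian group structure of $\W$ allows us to reduce to the case $w_1 = 0$. Writing $w := w_1^{-1}\cdot w_2$, $v := w_1^{-1}\cdot w_3$, $u := w^{-1}\cdot v = w_2^{-1}\cdot w_3$ and $r := \max\{\|w\|, \|v\|, \|u\|\}$ (so that $r \leq 2$ by the triangle inequality applied to the hypotheses), the task becomes to show that $d_{\He}(i_{0\to p}(v), i_{w\to q}(v)) \lesssim_{H,L} r^{1+\alpha/2}$ for $q = i_{0\to p}(w)$. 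Inserting intermediate points $p \cdot \Psi_p(v)$ and $q\cdot\Psi_q(u)$ via the triangle inequality, the outer two summands are bounded by $H\|v\|^{1+\alpha}$ and $H\|u\|^{1+\alpha}$ respectively by \eqref{eq:Distance estimate}, both $\lesssim_H r^{1+\alpha/2}$ since $r \leq 2$. It remains to estimate the middle term $M := d_{\He}(p \cdot \Psi_p(v), q \cdot \Psi_q(u))$, which I will split via the auxiliary point $[p\cdot\Psi_p(w)]\cdot\Psi_q(u)$ as $M \leq \mathrm{I} + \mathrm{II}$.

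The term $\mathrm{II} := d_{\He}([p\cdot\Psi_p(w)]\cdot\Psi_q(u), q\cdot\Psi_q(u))$ is handled in exact analogy with the term $I_2$ in Section \ref{verification}: setting $\mathfrak{a} := [p\cdot\Psi_p(w)]^{-1}\cdot q$, left-invariance gives $\mathrm{II} = \|\Psi_q(u)^{-1}\cdot\mathfrak{a}\cdot\Psi_q(u)\|$, and the fundamental commutator relation \eqref{eq:FundCommRel} together with the bounds $\|\mathfrak{a}\| \leq H\|w\|^{1+\alpha}$ (from \eqref{eq:Distance estimate}) and $\|\Psi_q(u)\| \lesssim_L \|u\|$ yields $\mathrm{II} \lesssim_L \|\mathfrak{a}\| + \sqrt{\|\mathfrak{a}\|\cdot\|\Psi_q(u)\|} \lesssim_{H,L} r^{1+\alpha/2}$. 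For the term $\mathrm{I} := \|\Psi_p(v)^{-1}\cdot\Psi_p(w)\cdot\Psi_q(u)\|$, an explicit coordinate computation based on \eqref{defiPsi_p} and the ODE $\dot{\tau}_p = \psi_p$ from \eqref{Cauchy problem} shows that $\Psi_p(v)^{-1}\cdot\Psi_p(w)\cdot\Psi_q(u) = (X, 0, T)$ in Euclidean coordinates, with (writing $\eta := y_v - y_w = y_u$)
\begin{equation*}
X = g(\eta), \qquad T = \int_0^{\eta} g(s)\, ds, \qquad g(s) := \psi_q(s) - \bigl[\psi_p(y_w + s) - \psi_p(y_w)\bigr].
\end{equation*}
Hence $\mathrm{I} \lesssim |X| + |T|^{1/2}$, and the argument reduces to a uniform bound on $g$ over $[-r, r]$.

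The crucial structural identity driving the bound on $g$ is that if $q_\star := p\cdot\Phi^{(p^{-1})}(y_w, \tau_p(y_w)) \in S$ denotes the point reached from $p$ by flowing along the characteristic of $S$ for parameter $y_w$, then a short calculation (using the uniqueness of characteristic curves on $S$ together with the translation formula \eqref{form34}) gives $\psi_{q_\star}(s) = \psi_p(y_w + s) - \psi_p(y_w)$ identically in $s$. Consequently $g(s) = \psi_q(s) - \psi_{q_\star}(s)$, and the main technical obstacle is to prove $|g(s)| \lesssim_{H,L} r^{1+\alpha}$ for $|s| \leq r$; this would give $|X| \lesssim r^{1+\alpha}$ and $|T|^{1/2} \leq (r \cdot r^{1+\alpha})^{1/2} = r^{1+\alpha/2}$, finishing the proof. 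Although $d_{\He}(q, q_\star)$ can be as large as $|t_w|^{1/2} \sim r$, the two points coincide on their horizontal $(x, y)$ coordinates up to an error of $O(Hr^{1+\alpha})$ and differ dominantly only by the central element $(0, 0, t_w)$; combining this structural observation with the translation formula \eqref{form34} and the extra vertical H\"older condition \eqref{eq:VertHolDef} produces a pointwise estimate $|\varphi^{(q^{-1})} - \varphi^{(q_\star^{-1})}| \lesssim_H r^{1+\alpha}$, and a Gr\"onwall-type argument applied to the defining ODEs for $\tau_q$ and $\tau_{q_\star}$ in \eqref{Cauchy problem} propagates this bound to $|\tau_q - \tau_{q_\star}|$, from which the desired estimate on $g$ follows.
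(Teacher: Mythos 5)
Your reduction to $w_1=0$, the triangle-inequality decomposition with the intermediate points $p\cdot\Psi_p(v)$ and $q\cdot\Psi_q(u)$, the commutator estimate for the term $\mathrm{II}$, and the coordinate computation identifying the remaining term with $|g(\eta)|+|\int_0^\eta g|^{1/2}$ all coincide with the paper's argument (there $g$ appears as the integrand in the bounds for $I_1$ and $I_2$, and the needed inequality $|g(s)|\lesssim_{H,L} r^{1+\alpha}$ is exactly \eqref{eq:bound_two_s}). The gap is in how you propose to prove that bound. The paper obtains it \emph{geometrically}: by Proposition \ref{p:contenutesSpPsi} the projected curves $\gamma_p(\cdot+y_2-y_1)$ and $\gamma_q$ both lie within $O(r^{1+\alpha})$ of the projected surface $\pi(p^{-1}\cdot S)$, hence within $O(r^{1+\alpha})$ of each other as sets, and a parameter-matching step using the (exactly preserved) second coordinate and the Euclidean Lipschitz continuity of $\psi_q$ converts this into the pointwise estimate with no loss. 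Your route instead compares $\varphi^{(q^{-1})}$ with $\varphi^{(q_\star^{-1})}$ pointwise and then compares the two characteristic ODEs, and both halves of this are only asserted.

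Concretely: (a) from \eqref{eq:Distance estimate} one only gets $q=q_\star\cdot(0,0,t_w)\cdot\mathfrak{e}$ with $\|\mathfrak{e}\|\lesssim H r^{1+\alpha}$, and translating a graph function by a non-central element $\mathfrak{e}$ perturbs the $t$-coordinate of the evaluation point by $O(\|\mathfrak{e}\|\cdot r)=O(Hr^{2+\alpha})$; feeding this into the intrinsic Lipschitz condition costs a square root and yields only $|\varphi^{(q^{-1})}-\varphi^{(q_\star^{-1})}|\lesssim r^{1+\alpha/2}$, not $r^{1+\alpha}$ (one can try to split the displacement and use \eqref{eq:VertHolDef} on its vertical part, but that requires connecting the evaluation points through the graph and is precisely the work that is missing). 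With only $|g|\lesssim r^{1+\alpha/2}$ the final exponent degrades to $1+\alpha/4$, which is not the statement being proved. (b) The "Gr\"onwall-type argument" is not a Gr\"onwall argument: $\varphi^{(q^{-1})}(s,\cdot)$ is only $(1+\alpha)/2$-H\"older in $t$, so one needs an Osgood comparison for $\dot u\le Hu^{(1+\alpha)/2}+Cr^{1+\alpha}$, and because the Cauchy problem \eqref{Cauchy problem} has non-unique solutions (the paper explicitly notes this, and your identity $\psi_{q_\star}(s)=\psi_p(y_w+s)-\psi_p(y_w)$ cannot invoke "uniqueness of characteristic curves" — at best one may \emph{choose} $\tau_{q_\star}$ to make it hold), the comparison only bounds $|\tau_q-\tau_{q_\star}|$ by the maximal Osgood solution; checking that this is still $\lesssim_{H,\alpha}r^2$ uniformly for $r\le 2$ is doable but absent. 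As written, the proposal therefore does not close; either carry out the perturbation and Osgood estimates in full (recovering the exponent is the nontrivial part), or replace this step by the paper's flag-approximation argument via Proposition \ref{p:contenutesSpPsi} and Lemma \ref{l:Notes3}.
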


\begin{remark}
If $\varphi$ does not depend on the $t$-variable, that is,
$\Phi(\W)$ is itself a Lipschitz flag and the extra H\"older
regularity holds with constant $H=0$, then the left hand side of
\eqref{form29} vanishes for all
$w_1,w_2,w_3\in \W$ with $i_{w_1\to p}(w_2)=q$. Indeed, Remark
\ref{r:iForFlag} implies in this case that
\begin{displaymath}
i_{w_1\to p}(w_3)=p\cdot \Psi_p(w_1^{-1}\cdot w_3),\quad i_{w_2\to
q}(w_3)=q\cdot \Psi_q(w_2^{-1}\cdot w_3),
\end{displaymath}
and
\begin{equation}\label{eq:qForm}
q=i_{w_1\to p}(w_2)=p\cdot \Psi_p(w_1^{-1}\cdot w_2).
\end{equation}
Hence, the left hand side of \eqref{form29} can be written as
\begin{displaymath}
d_{\He}(i_{w_1\to p}(w_3),i_{w_2\to q}(w_3))=
d_{\He}(\Psi_p(w_1^{-1}\cdot w_3),\Psi_p(w_1^{-1}\cdot w_2)\cdot
\Psi_q(w_2^{-1}\cdot w_3)).
\end{displaymath}
We recall from Remark \ref{r:WhatItMeansForFlags} that
\begin{equation}\label{form37} \Psi_q(y,t)= \Phi^{(q^{-1})}\left(y,t+ \int_0^y
\varphi^{(q^{-1})}(\eta,\tau_q(\eta))\,d\eta\right). \end{equation}
Let us spell out the formula for $\Phi^{(q^{-1})}$:
\begin{equation}\label{eq:chain_flag}
\Phi^{(q^{-1})}\overset{\eqref{eq:qForm}}{=}
[\Phi^{(p^{-1})}]^{(\Psi_p(w_1^{-1}\cdot w_2)^{-1})}=
\Psi_p(w_1^{-1}\cdot w_2)^{-1}\cdot
\Phi^{(p^{-1})}(\pi_{\W}(\Psi_p(w_1^{-1}\cdot w_2)\cdot [\cdot])),
\end{equation}
where we have applied the formula $\Phi(\pi_{\W}(p \cdot v)) = p \cdot \Phi^{(p^{-1})}(v)$ from Lemma \ref{l:translated} in the last
equality. Using \eqref{form37}-\eqref{eq:chain_flag}, and writing
$w_i=(0,y_i,t_i)$, it is not difficult to show that
\begin{align*}
&\Psi_p(w_1^{-1}\cdot w_2)\cdot \Psi_q(w_2^{-1}\cdot
w_3)\\&=\Psi_p(w_1^{-1}\cdot w_2)\cdot
\Phi^{(q^{-1})}\left(y_3-y_2,t_3-t_2+\int_0^{y_3-y_2}\varphi^{(q^{-1})}(\eta,\tau_q(\eta))\,d\eta\right)
\overset{\eqref{eq:chain_flag}}{=}\Psi_p(w_1^{-1}\cdot w_3).
\end{align*}
We omit some computations, as the remark only serves to motivate Proposition \ref{p:NotesCor}.
\end{remark}

\begin{proof}[Proof of Proposition \ref{p:NotesCor}]
We first apply the triangle inequality:
\begin{align*}
d_{\He}\left(i_{w_1 \to p}(w_3),i_{w_2 \to q}(w_3)\right) & \leq
d_{\He}\left(p \cdot \Psi_p(w_1^{-1}\cdot
w_3),i_{w_1 \to p}(w_3)\right)\\&\quad + d_{\He}(p \cdot \Psi_p(w_1^{-1}\cdot w_3),q \cdot
\Psi_q(w_2^{-1}\cdot w_3)) \\&\quad + d_{\He}\left(q \cdot
\Psi_q(w_2^{-1}\cdot w_3),i_{w_2 \to q}(w_3)\right).
\end{align*}
The estimate \eqref{eq:Distance estimate} shows that the first and third terms are bounded from above by $H d_{\He}(w_1,w_3)^{1+\alpha}$ and $H d_{\He}(w_2,w_3)^{1+\alpha}$, respectively, which is better than claimed. So, the heart of the matter is to prove an upper bound for the second term. This is the content of the next lemma.
\end{proof}

\begin{lemma}\label{l:NotesLem5}
Under the same assumptions as in Proposition \ref{p:NotesCor}, we
have
\begin{displaymath}
d_{\He}(p \cdot \Psi_p(w_1^{-1}\cdot w_3),q \cdot
\Psi_q(w_2^{-1}\cdot w_3)) \lesssim_{H,L}
\max\left\{d_{\He}(w_1,w_2),d_{\He}(w_1,w_3),d_{\He}(w_2,w_3)
\right\}^{1+\frac{\alpha}{2}}.
\end{displaymath}
\end{lemma}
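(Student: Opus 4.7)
The plan is to imitate the decomposition used for \eqref{form24} in Section \ref{verification}, despite the fact that $\Psi_{p}$ is not a group homomorphism. Set $u_{0} := w_{1}^{-1}\cdot w_{2}$, $v := w_{2}^{-1}\cdot w_{3}$, and $u := w_{1}^{-1}\cdot w_{3}$, so that $u = u_{0}\cdot v$ because the $\W$-product coincides with ordinary addition. Writing $u_{0} = (0,y_{0},t_{0})$, $v = (0,y_{v},t_{v})$ and $u = (0,y_{u},t_{u})$, we then have $y_{u} = y_{0}+y_{v}$ and $t_{u} = t_{0}+t_{v}$. The triangle inequality and left-invariance give
\[
d_{\He}(p\cdot \Psi_{p}(u), q\cdot \Psi_{q}(v)) \leq J_{1}+J_{2},
\]
where $J_{1} := d_{\He}(\Psi_{p}(u),\Psi_{p}(u_{0})\cdot \Psi_{q}(v))$ and $J_{2} := d_{\He}(p\cdot \Psi_{p}(u_{0})\cdot \Psi_{q}(v), q\cdot \Psi_{q}(v))$.

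The term $J_{2}$ has the form $\|\mathfrak{b}^{-1}\cdot \mathfrak{a}\cdot \mathfrak{b}\|$ with $\mathfrak{a} := q^{-1}\cdot p\cdot \Psi_{p}(u_{0})$ and $\mathfrak{b} := \Psi_{q}(v)$. The hypothesis $i_{w_{1}\to p}(w_{2}) = q$, combined with \eqref{eq:Distance estimate}, yields $\|\mathfrak{a}\| \leq H\|u_{0}\|^{1+\alpha}$, while the bilipschitz property of $\Psi_{q}$ gives $\|\mathfrak{b}\|\lesssim_{L}\|v\|$. The commutator identity \eqref{eq:FundCommRel} then bounds $J_{2}\leq \|\mathfrak{a}\| + \sqrt{\|\mathfrak{a}\|\|\mathfrak{b}\|}\lesssim_{H,L} d^{1+\alpha/2}$, where $d := \max\{d_{\He}(w_{1},w_{2}),d_{\He}(w_{2},w_{3}),d_{\He}(w_{1},w_{3})\}$. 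The computation is entirely parallel to the $I_{2}$ estimate in Section \ref{verification}.

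For $J_{1}$, the factorisation $\Psi_{p}(y,t) = \Phi^{(p^{-1})}(y,\tau_{p}(y))\cdot (0,0,t)$ from property (2) below Definition \ref{d:Psi_p}, the centrality of the elements $(0,0,c)$, and the identity $t_{u}=t_{0}+t_{v}$ reduce $J_{1}$ to the norm of
\[
\Phi^{(p^{-1})}(y_{u},\tau_{p}(y_{u}))^{-1}\cdot \Phi^{(p^{-1})}(y_{0},\tau_{p}(y_{0}))\cdot \Phi^{(q^{-1})}(y_{v},\tau_{q}(y_{v})).
\]
Direct multiplication in Heisenberg coordinates shows that this product equals $(E_{x},0,E_{t})$, where $E_{x} := -\psi_{p}(y_{u}) + \psi_{p}(y_{0}) + \psi_{q}(y_{v})$ and $E_{t} := \tau_{p}(y_{0}) + \tau_{q}(y_{v}) - \tau_{p}(y_{u}) + \psi_{p}(y_{0})\,y_{v}$. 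Both $E_{x}$ and $E_{t}$ vanish identically when $\varphi$ is independent of $t$ (the Lipschitz flag case, cf.\ Remark \ref{r:iForFlag}), so they should be regarded as errors introduced by the non-flag part of the perturbation. Letting $\Delta(s):= \psi_{q}(s) - \psi_{p}(y_{0}+s) + \psi_{p}(y_{0})$, the Cauchy problems defining $\tau_{p}$ and $\tau_{q}$ and the fundamental theorem of calculus identify $E_{x} = \Delta(y_{v})$ and $E_{t} = \int_{0}^{y_{v}}\Delta(\sigma)\,d\sigma$.

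The main obstacle is the pointwise control of $\Delta$. Writing $q^{-1}\cdot S = r\cdot (p^{-1}\cdot S)$ with $r := q^{-1}\cdot p$ and equating graph parametrisations produces the conjugation formula
\[
\varphi^{(q^{-1})}(y',t') = r_{x} + \varphi^{(p^{-1})}\bigl(y' - r_{y},\, t' - r_{t} - r_{x}y' + \tfrac{r_{x}r_{y}}{2}\bigr).
\]
Since $d_{\He}(r,\Psi_{p}(u_{0})^{-1}) = d_{\He}(q, p\cdot \Psi_{p}(u_{0}))\leq H\|u_{0}\|^{1+\alpha}$ by the hypothesis $i_{w_{1}\to p}(w_{2}) = q$, the quantity $\Delta(s)$ becomes a difference of values of $\varphi^{(p^{-1})}$ at two points whose $y$-coordinates differ by $O_{H}(\|u_{0}\|^{1+\alpha})$ and whose $t$-coordinates differ by $t_{0} + \int_{0}^{s}\Delta(\sigma)\,d\sigma + O_{H,L}(\|u_{0}\|^{2+\alpha})$. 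The intrinsic Lipschitz condition of Definition \ref{d:intrinsicGraphs} yields Euclidean Lipschitz continuity of $\varphi^{(p^{-1})}$ in the $y$-variable, with constant bounded uniformly for $p$ in bounded sets, while the extra vertical H\"older continuity of Definition \ref{d:ExtraVerticalHolder} handles the $t$-displacement. These estimates, together with $|t_{0}| \leq d^{2}$, produce the self-referential inequality
\[
|\Delta(s)| \lesssim_{H,L} d^{1+\alpha} + \bigl(d^{2} + Md\bigr)^{(1+\alpha)/2},\qquad M := \sup_{\sigma\in[0,|y_{v}|]}|\Delta(\sigma)|.
\]
A short dichotomy on whether $Md \leq d^{2}$ or $Md > d^{2}$ bootstraps this to $M\lesssim_{H,L,\alpha} d^{1+\alpha}$, which gives $|E_{x}|\lesssim d^{1+\alpha}$ and $|E_{t}|\lesssim d^{2+\alpha}$, whence $J_{1} = \sqrt[4]{E_{x}^{4} + 16E_{t}^{2}}\lesssim d^{1+\alpha/2}$. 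Adding the estimates for $J_{1}$ and $J_{2}$ completes the proof.
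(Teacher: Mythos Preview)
Your decomposition into $J_{1}+J_{2}$ and the commutator bound for $J_{2}$ match the paper's terms \eqref{i:I1}--\eqref{i:I2} exactly, and the formulas $E_{x}=\Delta(y_{v})$, $E_{t}=\int_{0}^{y_{v}}\Delta(\sigma)\,d\sigma$ are correct. For $J_{1}$ you take a different route: instead of the paper's geometric argument, you attempt to bound $\Delta$ directly via the conjugation identity for $\varphi^{(q^{-1})}$ and a bootstrap.

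There is a genuine gap in the bootstrap. Intrinsic Lipschitz continuity does \emph{not} imply Euclidean Lipschitz continuity of $\varphi^{(p^{-1})}$ in the $y$-variable: computing $\pi_{\W}(\Phi(w_{1})^{-1}\Phi(w_{2}))$ for $w_{i}=(y_{i},t)$ shows the cone condition gives only
\[
|\varphi^{(p^{-1})}(y_{1},t)-\varphi^{(p^{-1})}(y_{2},t)|\lesssim_{L}|y_{1}-y_{2}|+\bigl(|\varphi^{(p^{-1})}(y_{2},t)|\,|y_{1}-y_{2}|\bigr)^{1/2}.
\]
At the points in question $|\varphi^{(p^{-1})}|\sim|\psi_{p}(y_{0}+s)|\lesssim_{L}d$, so a $y$-displacement of order $d^{1+\alpha}$ contributes only $d^{1+\alpha/2}$; your inequality then yields merely $M\lesssim d^{1+\alpha/2}$, whence $|E_{t}|^{1/2}\lesssim d^{1+\alpha/4}$, which falls short. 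The fix is to handle the horizontal displacement by moving along the integral curve of $D_{2}^{\varphi^{(p^{-1})}}$ rather than at fixed $t$: there the Lipschitz continuity of $\psi_{p}$ (see below Definition~\ref{d:mappsi_p}) gives the needed $O(d^{1+\alpha})$ bound, and the extra $t$-shift so incurred is $O(d^{2+\alpha})$, hence harmless. After this repair the self-referential inequality is valid; the dichotomy is then superfluous, since the trivial a~priori bound $M\lesssim_{L}d$ already forces $Md\lesssim d^{2}$.

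The paper sidesteps this difficulty entirely. It projects both flags to $\R^{2}$ via $\pi$ and applies Proposition~\ref{p:contenutesSpPsi} (the flag approximation of $S$) twice, together with Lemma~\ref{l:Notes3}, to show that the curves $\gamma_{p}(\R)=\pi(\Psi_{p}(\W))$ and $\gamma_{q}(\R)=\pi(p^{-1}q\cdot\Psi_{q}(\W))$ stay within $O(d^{1+\alpha})$ of one another. This yields \eqref{eq:bound_two_s}, which is precisely your target $|\Delta|\lesssim d^{1+\alpha}$, obtained without any conjugation formula or bootstrap.
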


\begin{proof} We fix points
$ p$ and  $q=i_{w_1\to p}(w_2)$ as in the assumptions of
Proposition \ref{p:NotesCor}. We may assume with no loss of generality that $w_{1} \neq w_{2}$, since otherwise $q = p$ and the claimed estimate is clear. By left invariance of $d_{\He}$, we have
\begin{equation}\label{eq:TermToBeBounded}
d_{\He}(p \cdot \Psi_p(w_1^{-1}\cdot w_3),q \cdot
\Psi_q(w_2^{-1}\cdot w_3))= d_{\He}(\Psi_p(w_1^{-1}\cdot
w_3),p^{-1}\cdot q \cdot \Psi_q(w_2^{-1}\cdot w_3)).
\end{equation}
Let us denote $p^{-1}\cdot q =: (x,y,t)$. We then define the
curves
\begin{displaymath}
\gamma_p:\R \to \R^2,\quad \gamma_p(s):=
(\varphi^{(p^{-1})}(s,\tau_p(s)),s)
\end{displaymath}
and
\begin{displaymath}
\gamma_q:\R \to \R^2,\quad \gamma_q(s):=
(\varphi^{(q^{-1})}(s,\tau_q(s))+ x,s+y),
\end{displaymath}
whose traces are the $\pi$-projections of the
corresponding Lipschitz flags:
\begin{displaymath}
  \gamma_p(\R)= \pi\left(\Psi_p(\W)\right)\quad \text{and}\quad
   \gamma_q(\R)= \pi\left(p^{-1}\cdot q \cdot \Psi_q(\W)\right)
\end{displaymath}
We observe that the curves $\gamma_{p}$ and $\gamma_{q}$ come close in at least one point. Writing
\begin{equation}\label{eq:w1w2w3}
w_1 = (0,y_1,t_1),\quad w_2=(0,y_2,t_2),\quad w_3=(0,y_3,t_3),
\end{equation}
and recalling that $q = i_{w_{1} \to p}(w_{2})$ by the assumption in the lemma, we have
\begin{align}
\notag|\gamma_p(y_2-y_1)-\gamma_q(0)|&=
|\gamma_p(y_2-y_1)-(x,y)| \leq d_{\He}(\Psi_p(w_1^{-1}\cdot w_2),p^{-1}\cdot q)\\
\notag &= d_{\He} (p\cdot \Psi_p(w_1^{-1}\cdot w_2), i_{w_1\to p}(w_2))\\
&\overset{\eqref{eq:Distance estimate}}{\leq} H
d_{\He}(w_1,w_2)^{1+\alpha}.\label{eq:close_to_meeting}
\end{align}
We next show, \emph{a fortiori}, that the curves $\gamma_p$ and $\gamma_q$ also stay close
to each other for some time. Precisely, we claim that
\begin{equation}\label{eq:curve_closeness}
\dist_{\R^{2}}(\gamma_p(s+y_2-y_1),\gamma_{q}(\R)) \lesssim_{H,L}
\max\left\{|s+y_2-y_1|,d_{\He}(w_1,w_2) \right\}^{1+\alpha}, \quad s \in \R.
\end{equation}
Note that for $s=0$, the right hand side of
\eqref{eq:curve_closeness} equals $d_{\He}(w_1,w_2)^{1+\alpha}$, as
expected. To prove the claim for arbitrary $s\in \R$, we first
observe that
\begin{displaymath}
\gamma_p(s+y_2-y_1)= \pi(\Psi_p(s+y_2-y_1,0)). \end{displaymath}
Since $\Psi_{p}$ is Lipschitz according to the remark below Definition \ref{d:Psi_p},
\begin{displaymath}
\|\Psi_p(s+y_2-y_1,0)\|\lesssim_L \|(s+y_2-y_1,0)\|,
\end{displaymath}
and we find
\begin{displaymath}
\gamma_p(s+y_2-y_1) \in \pi\left(\Psi_p(\W) \cap B(0,r)\right)
\end{displaymath}
for some $0 < r\lesssim_L  \max\left\{|s+y_2-y_1|,d_{\He}(w_1,w_2) \right\}$. It follows from
Proposition \ref{p:contenutesSpPsi} and Lemma \ref{l:Notes3} that
\begin{equation}\label{eq:Incl1}
\gamma_p(s+y_2-y_1) \in \left[\pi\left(\left(p^{-1}\cdot \Phi(\W)
\right)\cap B(0,r+c\delta)\right)\right]^{\mathbb{R}^2}_{c
\delta},
\end{equation}
where the constant $c$ depends only on $L$, and
$\delta:=Hr^{1+\alpha}$. Since
\begin{displaymath}
d_{\He}(p,q) \leq d_{\He}(p^{-1}\cdot q,\Psi_p(w_1^{-1}\cdot
w_2))+ \|\Psi_p(w_1^{-1}\cdot w_2)\|\overset{\eqref{eq:Distance
estimate}}{\lesssim_{H,L}} d_{\He}(w_1,w_2)^{1+\alpha}+
d_{\He}(w_1,w_2),
\end{displaymath}
and $d_{\He}(w_1,w_2)\leq 1$, we have
\begin{equation}\label{eq:Incl2}
B\left(0,r+c \delta\right)\subset B(p^{-1}\cdot q,R)
\end{equation}
for some $R\geq r$ with $R\lesssim_{H,L}
\max\{|s+y_2-y_1|,d_{\He}(w_1,w_2)\}$. By another instance of
Proposition \ref{p:contenutesSpPsi} (applied to the point $q$),
Lemma \ref{l:Notes3}, and left translation by $p^{-1}$, we find
that
\begin{equation}\label{eq:Incl3}
\left[\pi\left(\left(p^{-1}\cdot \Phi(\W) \right)\cap
B(p^{-1}\cdot q,R)\right)\right]^{\mathbb{R}^2}_{c\delta}\subseteq
\left[\pi\left(p^{-1}\cdot q \cdot
\Psi_q(\W)\right)\right]^{\mathbb{R}^2}_{2cHR^{1+\alpha}}=
\left[\gamma_q(\mathbb{R})\right]^{\mathbb{R}^2}_{c_{L,H}R^{1+\alpha}}.
\end{equation}
for a constant $0<c_{L,H}<\infty$ that depends only on $L$ and
$H$. Then the claim \eqref{eq:curve_closeness} follows  by
combining the inclusions \eqref{eq:Incl1}, \eqref{eq:Incl2}, and
\eqref{eq:Incl3}.

We now fix $s \in \R$, and let $s'\in
\mathbb{R}$ be any point such that $|\gamma_p(s+y_2-y_1) - \gamma_{q}(s')| \lesssim_{H,L} \max\{|s + y_{2} - y_{1}|,d_{\He}(w_{1},w_{2})\}^{1 + \alpha}$. The existence of $s'$ is guaranteed by \eqref{eq:curve_closeness}. We next show that $s'$ cannot be too far from
$s$. Indeed, considering the first component of
$\gamma_p(s+y_2-y_1)-\gamma_q(s')$, we see immediately from \eqref{eq:curve_closeness} that
\begin{equation}\label{eq:p_q_comparison}
|\varphi^{(p^{-1})}(s+y_2-y_1,\tau_p(s+y_2-y_1))-\varphi^{(q^{-1})}(s',\tau_q(s'))-x|\lesssim_{H,L}
\max\{|s+y_2-y_1|,d_{\He}(w_1,w_2)\}^{1+\alpha}.
\end{equation}
Considering the second component, we find the estimate
\begin{equation}\label{eq:first_s_s'_estimate}
|s+y_2-y_1-s'-y|\lesssim_{H,L}
\max\{|s+y_2-y_1|,d_{\He}(w_1,w_2)\}^{1+\alpha}.
\end{equation}
By the initial estimate \eqref{eq:close_to_meeting}, we know that
\begin{equation}\label{eq:w1_w2_p_q}
|(y_2-y_1)-y|\leq H d_{\He}(w_1,w_2)^{1+\alpha},
\end{equation}
so \eqref{eq:first_s_s'_estimate} yields
\begin{equation}\label{eq:s_s'_comp}
|s-s'|\lesssim_{H,L}
\max\{|s+y_2-y_1|,d_{\He}(w_1,w_2)\}^{1+\alpha}.
\end{equation}
This last estimate allows us to deduce a version of
\eqref{eq:p_q_comparison} with "$s'$" replaced by "$s$". Indeed,
recalling that $s\mapsto\psi_q(s)=
\varphi^{(q^{-1})}(s,\tau_q(s))$ is a Lipschitz function
$\mathbb{R} \to \mathbb{R}$ whose Lipschitz constant depends only
on $L$ (see below Definition \ref{d:mappsi_p}), we find
\begin{align}
\notag|\varphi^{(p^{-1})}&(s+y_2-y_1,\tau_p(s+y_2-y_1))-\varphi^{(q^{-1})}(s,\tau_q(s))-x|\\&\leq
|\varphi^{(q^{-1})}(s,\tau_q(s))-\varphi^{(q^{-1})}(s',\tau_q(s'))|\\\notag&+
|\varphi^{(p^{-1})}(s+y_2-y_1,\tau_p(s+y_2-y_1))-\varphi^{(q^{-1})}(s',\tau_q(s'))-x|\\
\notag&\overset{\eqref{eq:p_q_comparison}}{\lesssim}_{L,H}|s-s'| +
\max\{|s+y_2-y_1|,d_{\He}(w_1,w_2)\}^{1+\alpha}\\
&\overset{\eqref{eq:s_s'_comp}}{\lesssim}_{L,H}
\max\{|s+y_2-y_1|,d_{\He}(w_1,w_2)\}^{1+\alpha}.\label{eq:bound_two_s}
\end{align}
After these preparations, we are ready to deduce the desired upper
bound for \eqref{eq:TermToBeBounded} by considering $s:=y_3-y_2$.
We will show that
\begin{equation}\label{eq:CompacitbilityGoal}
d_{\He}(\Psi_p(w_1^{-1}\cdot w_3),p^{-1}\cdot q\cdot
\Psi_q(w_2^{-1}\cdot
w_3))\lesssim_{L,H}\max\{d_{\He}(w_1,w_2),d_{\He}(w_1,w_3),d_{\He}(w_2,w_3)\}^{1+\frac{\alpha}{2}}.
\end{equation}

It is convenient to estimate the expression on the left hand side
as follows
\begin{align}
\notag d_{\He}(\Psi_p(w_1^{-1}\cdot w_3)&, p^{-1}\cdot q\cdot
\Psi_q(w_2^{-1}\cdot w_3))\\
\label{i:I1} &\leq
 d_{\He}(\Psi_p(w_1^{-1}\cdot w_3),\Psi_p(w_1^{-1}\cdot
w_2)\cdot
\Psi_q(w_2^{-1}\cdot w_3))\\
\label{i:I2} &+ d_{\He}(\Psi_q(w_2^{-1}\cdot w_3),
\Psi_p(w_1^{-1}\cdot w_2)^{-1}\cdot p^{-1}\cdot q\cdot
\Psi_q(w_2^{-1}\cdot w_3))
\end{align}

First, the term \eqref{i:I2} can be bounded using the fundamental
commutator relation as in \eqref{eq:FundCommRel} with
\begin{displaymath}
\mathfrak{a}:=[p\cdot \Psi_p(w_1^{-1}\cdot w_2)]^{-1}\cdot q
\quad\text{and}\quad \mathfrak{b}:=\Psi_q(w_2^{-1}\cdot w_3).
\end{displaymath}
This yields
\begin{align*}
\eqref{i:I2}&\lesssim \|[p\cdot \Psi_p(w_1^{-1}\cdot
w_2)]^{-1}\cdot q\|+ \| [p\cdot \Psi_p(w_1^{-1}\cdot
w_2)]^{-1}\cdot q\|^{\frac{1}{2}}\|\Psi_q(w_2^{-1}\cdot
w_3)\|^{\frac{1}{2}}
\\&\lesssim_{L,H}d_{\He}(w_1,w_2)^{1+\alpha}+
d_{\He}(w_1,w_2)^{\frac{1+\alpha}{2}}d_{\He}(w_2,w_3)^{\frac{1}{2}},
\end{align*}
where the last inequality follows from $q=i_{w_1\to p}(w_2)$, the
estimate \eqref{eq:Distance estimate}, and the Lipschitz
continuity of $\Psi_p$. Hence, recalling that
$d_{\He}(w_1,w_2)\leq 2$, the expression \eqref{i:I2} can be
bounded from above by the right hand side of
\eqref{eq:CompacitbilityGoal}.

Next, we handle the term \eqref{i:I1}. Since points on the
$t$-axis commute with all other elements in $\He^1$, it follows
from the definition of
 $\Psi_p$ and $\Psi_q$ that \eqref{i:I1} is independent of the vertical components
of $w_1,w_2,w_3$. Writing these points in coordinates, as in
\eqref{eq:w1w2w3},  and recalling that $s=y_3-y_2$, we thus find
\begin{equation}\label{eq:ProdFormula}
\eqref{i:I1}=\|\Psi_p(s+y_2-y_1,0)^{-1}\cdot
\Psi_p(y_2-y_1,0)\cdot \Psi_q(s,0) \|.
\end{equation}
While $\Psi_p$ and $\Psi_q$ are in general not group
homomorphisms, their second components are linear:
\begin{displaymath}
[\Psi_p]_2(y,t)=[\Psi_q]_2(y,t)=y,\quad (y,t)\in \W.
\end{displaymath}
Thus, the second coordinate of the product in
\eqref{eq:ProdFormula} vanishes by linearity, and it suffices to
consider the first and and third coordinate, which we denote by
$I_1$ and $I_2$, respectively, so that
\begin{equation}\label{eq:finalStepI1}
\eqref{i:I1} \lesssim |I_1|+ |I_2|^{\frac{1}{2}}.
\end{equation}
Using that $\varphi^{(q^{-1})}(0,0)=0$, we may write
\begin{align*}
I_1=\varphi^{(p^{-1})}(y_2-y_1,\tau_p(y_2-y_1&))-\varphi^{(q^{-1})}(0,0)-x\\&+x+\varphi^{(q^{-1})}(s,\tau_q(s))-\varphi^{(p^{-1})}(s+y_2-y_1,\tau_p(s+y_2-y_1)).
\end{align*}
The term $I_1$ is the sum of two expressions of the same form as
in the estimate \eqref{eq:bound_two_s}, and we thus deduce that
\begin{displaymath}
|I_1|\lesssim
 \max\{|y_2-y_1|,d_{\He}(w_1,w_2)\}^{1+\alpha}+
\max\{|s+y_2-y_1|,d_{\He}(w_1,w_2)\}^{1+\alpha}.
\end{displaymath}
Clearly, $|y_2-y_1|\leq d_{\He}(w_1,w_2)$ and by the choice of
$s=y_3-y_2$, we have
\begin{displaymath}
|s+y_2-y_1|=|y_3-y_1|\leq d_{\He}(w_1,w_3).
\end{displaymath}
Thus we see that $|I_1|$ is bounded from above by the right hand
side of \eqref{eq:CompacitbilityGoal}, using again that
$d_{\He}(w_1,w_3),d_{\He}(w_1,w_2)\leq 2$. It remains to bound
$|I_2|$, where $I_2$ denotes the third component of the product in
\eqref{eq:ProdFormula}. A direct computation yields
\begin{displaymath}
I_2=
-\tau_p(s+y_2-y_1)+\tau_p(y_2-y_1)+\tau_q(s)+s\varphi^{(p^{-1})}(y_2-y_1,\tau_p(y_2-y_1)),
\end{displaymath}
and we continue as follows:
\begin{align*}
|I_2|&=|\tau_p(s+y_2-y_1)-\tau_p(y_2-y_1)-\tau_q(s)-s\varphi^{(p^{-1})}(y_2-y_1,\tau_p(y_2-y_1))|\\&=
\left|\int_{y_2-y_1}^{s+y_2-y_1} \dot{\tau}_p(\sigma)\,d\sigma-
\int_0^s \dot{\tau}_q(\sigma)+\varphi^{(p^{-1})}(y_2-y_1,\tau_p(y_2-y_1))\,d\sigma\right|\\
&=\left|\int_0^s
\dot{\tau}_p(\sigma+y_2-y_1)-\dot{\tau}_q(\sigma)-
\varphi^{(p^{-1})}(y_2-y_1,\tau_p(y_2-y_1))\,d\sigma\right|\\
&= \bigg| \int_0^s
\left[ \varphi^{(p^{-1})}(\sigma+y_2-y_1,\tau_p(\sigma+y_2-y_1))-\varphi^{(q^{-1})}(\sigma,\tau_{q}(\sigma))- x \right]\\
& \qquad + \left[x + \varphi^{(q^{-1})}(0,\tau_{q}(0)) - \varphi^{(p^{-1})}(y_2-y_1,\tau_p(y_2-y_1)) \right] \, d\sigma\bigg| \\
&\overset{\eqref{eq:bound_two_s}}{\lesssim}_{L,H} \int_{J_s}
\max\{|\sigma+y_2-y_1|,d_{\He}(w_1,w_2)\}^{1+\alpha}\,d\sigma\\
&\lesssim_{H,L} |s|
\max\{d_{\He}(w_1,w_2),d_{\He}(w_1,w_3)\}^{1+\alpha}\\&\lesssim_{H,L}
\max\{d_{\He}(w_1,w_2),d_{\He}(w_1,w_3),d_{\He}(w_2,w_3)\}^{2+\alpha},
\end{align*}
where $J_s:=[s,0]$ if $s\leq 0$ and $J_s:=[0,s]$ if $s\geq 0$. To
justify the application of \eqref{eq:bound_two_s} above, we have
applied inside the integral an analogous argument as we did to
bound the term $I_1$.

Finally inserting the bounds for $|I_1|$ and $|I_2|$ in
\eqref{eq:finalStepI1}, we conclude that \eqref{i:I1} is bounded
from above by the right hand side of
\eqref{eq:CompacitbilityGoal}. Combined with the bound for
\eqref{i:I2}, this concludes the proof of the lemma.
\end{proof}

\begin{proof}[Proof of Theorem \ref{mainVertical}]
The BP$G$BI condition "at unit scale" follows from Theorem
\ref{main}, whose hypotheses \eqref{ISO} and \eqref{comp} we have
verified in \eqref{eq:FirstCondVertical} and Proposition
\ref{p:NotesCor}, respectively. Here
\begin{equation*}\label{eq:G_Mnew} (G,d_{G},\mu) = (\mathbb{R}^2,d_{\Pi},\mathcal{L}^2),  \quad \text{and} \quad (M,d_{M}) = (S,d_{\He}), \end{equation*}
with $x_{0} = 0 \in G$, and $p_{0} \in S$ arbitrary, and we recall
that $(G,d_G)$ is isometric to $(\W,d_{\He})$. More precisely,
Theorem \ref{main} yields the existence of $2L$-bilipschitz maps
$f \colon K \to S \cap B(p,1)$, $p \in S$, where $K \subset G$
with $\calH^{3}(K) \geq \delta >0$. The constant $L$ only depends
on the intrinsic Lipschitz constant of $\varphi$, and $\delta
> 0$ depends in addition on $\alpha$ and the constant $H$ in
\eqref{eq:VertHolDef}. Since  property \eqref{eq:VertHolDef}
improves under ``zooming in'', see Remark
\ref{r:VertHolDilateTranslate}, we can argue analogously as in
Section \ref{ss:red_unit_scale}. Let $p \in S$ and, first, $0 < r
\leq C$, where $C := 2\diam_{\He}(\Phi(\spt \varphi))$. Using
Remark \ref{r:VertHolDilateTranslate} and the support assumption
on $\varphi$, we see that $S_{1/r} := \delta_{1/r}(S)$ is an
intrinsic graph of an intrinsic Lipschitz function (with the same
constant) satisfying \eqref{eq:VertHolDef} with constants $\alpha$
and $H'=H'(H,C)$.

Therefore, by the BP$G$BI property at scale $r = 1$, every ball
$S_{1/r} \cap B(p,1)$ contains the image of a $2L$-bilipschitz map
$g$ from a compact set $K \subset G$ with $\calH^{3}(K) \geq
\delta = \delta(C) > 0$. Now, one may simply pre- and post-compose
$g$ with the natural dilations in $G$ and $\He^1$ to produce a
$2L$-bilipschitz map $g_{r} \colon \delta_{r}(K) \to S \cap
B(\delta_{r}(p),r)$ (note also that $\calH^{3}(\delta_{r}(K)) =
r^{3}\calH^{3}(K) \geq \delta r^{3}$).

Next, consider the case $r > C$. Then, if $p \in S$ is arbitrary,
the set $S \cap B(p,r)$ satisfies
\begin{displaymath} \calH^{3}([S \cap B(p,r)] \cap \W) \gtrsim \calH^{3}(S \cap B(p,r)). \end{displaymath}
Thus, the restriction of $\mathrm{Id}$ to $[S \cap B(p,r)] \cap
\W$ yields the desired bilipschitz map. The proof of Theorem
 \ref{mainVertical} is thus complete.
\end{proof}

\subsection{Application to $C^1$ and intrinsic $C^{1,\alpha}$ surfaces}

As a first application of Theorem \ref{mainVertical}, we deduce
the case $n=1$ of Theorem \ref{mainGraphs}, recalling from Example
\ref{ex:intrC1alpha} that a compactly supported
$C^{1,\alpha}_{\He}(\W)$ function is intrinsic Lipschitz and
satisfies the extra vertical H\"older regularity condition.

\begin{thm}\label{mainGraphsn_1} Let $S = \Phi(\W) \subset \He^1$, where $\varphi \in C^{1,\alpha}_{\He}(\W)$ is compactly supported.
Then $S$ has big pieces of bilipschitz images of the parabolic
plane $(\Pi,d_{\Pi})$.
\end{thm}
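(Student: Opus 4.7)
The plan is to deduce Theorem \ref{mainGraphsn_1} directly as a corollary of Theorem \ref{mainVertical}, so essentially no new work is required beyond checking that the hypotheses transfer. Specifically, Theorem \ref{mainVertical} applies to any compactly supported intrinsic Lipschitz function $\varphi \colon \W \to \V$ having extra vertical H\"older regularity in the sense of Definition \ref{d:ExtraVerticalHolder}, and yields the BP$G$BI conclusion (with $G = \Pi$) which is exactly what is claimed here. So the entire task reduces to verifying that a compactly supported $\varphi \in C^{1,\alpha}_{\He}(\W)$ satisfies these two regularity properties.

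First I would invoke Example \ref{ex:intrC1alpha}, which treats exactly this case: because $\varphi$ has compact support, Proposition \ref{equivProp} gives that $\nabla^{\varphi}\varphi$ exists, is continuous, and has finite $L^{\infty}$ norm $L := \|\nabla^{\varphi}\varphi\|_{L^{\infty}(\W)} < \infty$. Then \cite[Lemma 2.22]{CFO2} yields that $\varphi$ is intrinsic Lipschitz with constant depending only on $L$. For the extra vertical H\"older regularity with constants $\alpha$ and some $H'>0$, I would apply \cite[Proposition 4.2]{CFO2} together with the characterization of compactly supported $C^{1,\alpha}_{\He}(\W)$ functions given in Proposition \ref{equivProp}. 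Both ingredients are already recorded in Example \ref{ex:intrC1alpha} of the present paper, so really nothing new needs to be verified.

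Having established both hypotheses, Theorem \ref{mainVertical} directly provides constants $L', \theta > 0$ such that for every $p \in S$ and $0 < r \leq \diam_{\He}(S)$, one finds a compact $K \subset B(0,r) \subset \Pi$ and an $L'$-bilipschitz map $f \colon K \to \He^1$ with $\calH^{3}(f(K) \cap [S \cap B(p,r)]) \geq \theta r^{3}$. This is precisely the BP$\Pi$BI conclusion of Definition \ref{BPdef}, and hence of Theorem \ref{mainGraphsn_1}.

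The only potential obstacle would be making sure that the constants produced by Example \ref{ex:intrC1alpha} are really compatible with the quantitative hypothesis of Theorem \ref{mainVertical}, i.e.\ that $H'$ and the intrinsic Lipschitz constant depend only on the data $\alpha$, $H$, $L$, and $\spt \varphi$. Since both cited results from \cite{CFO2} are quantitative and local in nature, and the dilation/translation invariance noted in Remark \ref{r:VertHolDilateTranslate} and Remark \ref{r:constants} ensures that left-translates and Heisenberg rescalings of $\varphi$ preserve the relevant constants, this compatibility is automatic. In short, Theorem \ref{mainGraphsn_1} is a genuinely immediate consequence of Theorem \ref{mainVertical} and the remarks already in place.
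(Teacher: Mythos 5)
Your proposal is correct and matches the paper's own argument: the paper also deduces Theorem \ref{mainGraphsn_1} directly from Theorem \ref{mainVertical}, citing Example \ref{ex:intrC1alpha} (which in turn rests on Proposition \ref{equivProp} and the two results from \cite{CFO2} you name) to verify that a compactly supported $C^{1,\alpha}_{\He}(\W)$ function is intrinsic Lipschitz with extra vertical H\"older regularity.
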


\begin{remark}
As a corollary of Theorem \ref{mainVertical}, we also obtain that
every Euclidean $C^1$ surface in $\He^1$ is rectifiable by
bilipschitz images of subsets of the parabolic plane. As written
in the introduction, this was known before by the work of
Cole-Pauls and Bigolin-Vittone, cf.\ Theorem \ref{CPBV}, but we
briefly explain how to deduce it from Theorem \ref{mainVertical}.
The reduction uses again the result by Balogh \cite{MR2021034},
which says that the set $\Sigma(S)$ of \emph{characteristic
points} of a Euclidean $C^1$ surface in $\He^1$ has vanishing
$3$-dimensional Hausdorff measure with respect to $d_{\He}$.

We will argue that outside $\Sigma(s)$, the surface $S$ can be
written locally as intrinsic graph of a compactly supported
Euclidean $C^1$ function, and hence as intrinsic Lipschitz graph
with extra vertical H\"older regularity. This will show that $S$
is rectifiable by bilipschitz images of subsets of the parabolic
plane.

We now turn to the details. Let $p \in S\setminus \Sigma(S)$. For
$r>0$ small enough, $S\cap B(p,r)$ is contained in the level set
$\{f=0\}$ of a Euclidean $C^1$ function
$f:\mathbb{R}^3\to\mathbb{R}$ with non-vanishing gradient in
$B(p,r)$. Without loss of generality, we may assume that $f(0)=0$,
$Xf(0)>0$ and
\begin{displaymath}
S\cap B(p,r)=\{q\in B(p,r):\; f(q)=0\}
\end{displaymath}
for $r>0$ with the property that $Xf(q)>0$ for all $q\in B(p,r)$.
Since $Xf(0)=\partial_x f(0)$, we may further assume, by making
$r$ smaller if necessary, that $\partial_x f(q)>0$ for all $q\in
B(p,r)$. In order to write $S\cap B(p,r)$, for small enough $r$,
as  intrinsic graph of a Euclidean $C^1$ function, we first
consider the diffeomorphism
\begin{displaymath}
F:\mathbb{R}^3 \to \mathbb{R}^3,\quad
F(x,y,t)=\left(x,y,t+\tfrac{xy}{2}\right).
\end{displaymath}
Then $F(S\cap B(p,r))$ is contained in the level set of  $f\circ
F^{-1}$, and hence it is again a Euclidean $C^1$ surface.  Since
the derivative of $F$ at the origin is the identity, and
$\partial_x f(q)>0$ for all $q\in B(p,r)$, we can apply the usual
implicit function theorem to deduce that, if $r>0$ is small
enough,  there is an open set $U\subset \mathbb{R}^2$, and a
Euclidean $C^1$ function $\psi: U  \to \mathbb{R}$ such that
$F(S\cap B(p,r))$ is the Euclidean graph of $\psi$ over the set
$U$ in the $yt$-plane:
\begin{displaymath}
F(S\cap B(p,r))= \{(\psi(y,t),y,t):\; (y,t)\in U\}.
\end{displaymath}
It is easy to see that the preimage of this set under $F$ is then
given by the \textbf{intrinsic} graph of $\psi$,
\begin{displaymath}
S\cap B(p,r)=\left\{
(\psi(y,t),y,t-\tfrac{1}{2}y\psi(y,t)):\,(y,t)\in U\right\}.
\end{displaymath}
We will next modify $\psi$ to obtain a Euclidean $C^1$ function
$\varphi$ that is defined on the entire plane, but compactly
supported. To this end, let $B',B \subset U$ be concentric balls,
relatively open in the $yt$-plane $\W$ (identified with
$\mathbb{R}^2$), such that
\begin{displaymath}
[B(p,r') \cap S] \subseteq \{w\cdot \psi(w):\; w\in B'\} \subseteq
\{w\cdot \psi(w):\, w\in B\} \subseteq [B(p,r)\cap S]
\end{displaymath}
for some $0<r'<r$. We define
\begin{equation*}
\varphi (w) := \left\{\begin{array}{ll} \psi (w),&\text{if } w\in B',\\
\xi (w),&\text{if } w \in  B\setminus B',
\\0,&\text{otherwise},\end{array}\right.
\end{equation*}
with a suitable $C^1$ function $\xi$ in order that $\varphi $ is
also $C^1$. More precisely, $\varphi $ is a compactly supported
$C^1$ function defined in $\W$ such that  $S \cap B(p,r') =
\Phi(\W)\cap B(p,r')$. By Example \ref{ex:EuclC1}, we know that
$\varphi$ is also an intrinsic Lipschitz function with extra
vertical regularity. Finally, it follows from Theorem
\ref{mainVertical} that $\Phi(\W)$ is rectifiable by bilipschitz
images, and hence the same holds for  $S \cap B(p,r')$. Repeating
the argument for every noncharacteristic point in $S$ proves that
$S$ is rectifiable by bilipschitz images of subsets of the
parabolic plane, and in particular LI rectifiable.\end{remark}

\appendix

\section{Fat Cantor sets in metric measure spaces}\label{FCSPropProof}

Here is again the statement of Proposition \ref{FCSProp}:
\begin{proposition} Every doubling and complete metric measure space $(X,d,\mu)$ of diameter $\geq 1$ admits fat Cantor sets. In other words, for every $\epsilon > 0$ and $n_{0} \geq 0$, the constants $\delta(n_{0}) > 0$ and $\tau(\epsilon) > 0$ can be found as in Definition \ref{FCS}. They are also allowed to depend on the doubling constant of $(X,d,\mu)$.
\end{proposition}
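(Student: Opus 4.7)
The plan is to execute an inductive Cantor-set construction by means of the Christ dyadic cube decomposition for doubling metric measure spaces. Recall that this provides, for each $n \in \Z$, a family $\calQ_n$ of Borel ``cubes'' that partition $X$ modulo $\mu$-null sets, nested across consecutive scales analogously to \nref{ii}, whose members satisfy $\diam(Q) \leq a_0 2^{-n}$ and
\begin{displaymath}
\mu(\{y \in Q : d(y, X \setminus Q) \leq \rho 2^{-n}\}) \leq C_0 \rho^{\eta_0} \mu(Q), \qquad 0 < \rho \leq 1, \; Q \in \calQ_n,
\end{displaymath}
for constants $a_0 \leq \tfrac{1}{2}$, $C_0 \geq 1$, $\eta_0 > 0$ depending only on the doubling constant of $(X,d,\mu)$ (the bound on $a_0$ can be arranged by rescaling the dyadic parameter).

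Fix $x \in X$, $\epsilon > 0$ and $n_0 \geq 0$. First I would set $n_* := \max(n_0, n_\#)$, where $n_\# \geq 0$ depends only on the doubling constant and is chosen so that for every $n \geq n_\#$ there is a Christ cube at scale $n$ contained in $B(x,1)$. By doubling, one may pick $Q_0 \in \calQ_{n_*}$ with $\mu(Q_0) \geq c(n_0)\mu(B(x,1))$ for some $c(n_0) > 0$ depending only on $n_0$ and the doubling constant. Declare $\calD_n := \{Q_0\}$ for $n_0 \leq n \leq n_*$, which trivially satisfies \nref{i}--\nref{iv} since $\diam(Q_0) \leq a_0 2^{-n_*} < 2^{-n}$ in that range. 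Then, for $n > n_*$ and each $Q \in \calD_{n-1}$, I would consider every Christ subcube $R$ of $Q$ at scale $n$ and replace it by the compact set
\begin{displaymath}
R' := \{y \in \overline{R} : d(y, X \setminus R) \geq \tau 2^{-(1+\epsilon)n}\},
\end{displaymath}
letting $\calD_n$ collect all such $R'$. Then $\diam(R') < 2^{-n}$, and for distinct $R_1, R_2$ one has $R_1' \subset R_1$ at distance at least $\tau 2^{-(1+\epsilon)n}$ from $X \setminus R_1$, and hence from $R_2' \subset R_2$, verifying \nref{iv}.

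By the small-boundary estimate applied with $\rho = \tau 2^{-\epsilon n}$, each $R$ loses relative measure at most $C_0 (\tau 2^{-\epsilon n})^{\eta_0}$ in the passage to $R'$; summing over the essentially disjoint $R$'s yields $\mu(K_n) \geq \mu(K_{n-1})(1 - C_0 (\tau 2^{-\epsilon n})^{\eta_0})$. Iterating and using $\prod_k (1 - a_k) \geq 1 - \sum_k a_k$ for $a_k \in [0,1]$ gives
\begin{displaymath}
\mu(K) \geq \mu(Q_0)\bigg(1 - \frac{C_0 \tau^{\eta_0}}{1 - 2^{-\epsilon\eta_0}}\bigg).
\end{displaymath}
Choosing $\tau = \tau(\epsilon) > 0$ small enough makes the bracket at least $\tfrac{1}{2}$, uniformly in $n_0$; completeness of $(X,d)$ then ensures that $K$, a decreasing intersection of nonempty compact sets, is itself a nonempty compact set satisfying $\mu(K) \geq \delta(n_0)\mu(B(x,1))$ with $\delta(n_0) := \tfrac{1}{2}c(n_0)$, as required.

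The main technical obstacle is the compactness requirement in \nref{i}: Christ cubes are Borel, not necessarily closed, so one has to verify that the shrunk sets $R'$ (defined via closures) are genuinely compact, remain pairwise disjoint both within a common parent and across distinct parents, and still inherit the small-boundary measure estimate after closure. Once this bookkeeping is done, the rest of the argument reduces to summing the geometric series of relative measure losses, as above.
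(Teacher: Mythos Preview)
Your approach is essentially the same as the paper's: both build the Cantor set by starting from a Christ cube $Q_0$ at generation roughly $n_0$, and then at each subsequent generation $n$ remove the boundary region $\partial_{\tau 2^{-\epsilon n}} R$ from every descendant Christ cube $R$, using the small-boundary estimate $\mu(\partial_\rho R)\le C_0\rho^{\eta_0}\mu(R)$ to control the total measure lost via a geometric series in $n$.

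There is one genuine gap in your inductive step that you do not flag. When you pass from $\calD_{n-1}$ to $\calD_n$ you write ``for each $Q\in\calD_{n-1}$, consider every Christ subcube $R$ of $Q$ at scale $n$''. But the elements of $\calD_{n-1}$ are already \emph{shrunk} sets, not Christ cubes, so you presumably mean the Christ children $R$ of the original cube $\widehat{R}$ underlying $Q$. Those children cover $\widehat{R}$, not $Q$, and there is no reason the shrunk set $R'=\{y\in\overline{R}:d(y,X\setminus R)\ge\tau 2^{-(1+\epsilon)n}\}$ should lie inside the shrunk parent $Q\subsetneq\widehat{R}$. As written, the nesting property \nref{ii} can fail. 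The paper fixes this by defining the new cube as $R':=[R\setminus\partial_{\tau 2^{-\epsilon n}}R]\cap\widehat{Q}'$, explicitly intersecting with the already-constructed parent.

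This same issue undermines your multiplicative measure bound. You claim $\mu(K_n)\ge\mu(K_{n-1})(1-C_0(\tau 2^{-\epsilon n})^{\eta_0})$, but the boundary regions you remove at generation $n$ have total mass controlled by $C_0(\tau 2^{-\epsilon n})^{\eta_0}\sum_R\mu(R)$, and the $R$'s partition the \emph{unshrunk} ancestors, so this sum equals $\mu(Q_0)$, not $\mu(K_{n-1})$. Since $\mu(Q_0)\ge\mu(K_{n-1})$, your inequality does not follow. The paper avoids this by running the estimate additively: if $x\in Q_0\setminus K$ then $x\in\partial_{\tau 2^{-\epsilon n}}R$ for some $n$ and some $R\in\calQ_n$ with $R\subset Q_0$, whence
\[
\mu(Q_0\setminus K)\le\sum_{n\ge n_0}\sum_{R\subset Q_0}\mu(\partial_{\tau 2^{-\epsilon n}}R)\le C_0\tau^{\eta_0}\sum_{n\ge n_0}2^{-\epsilon\eta_0 n}\,\mu(Q_0),
\]
which is the same geometric series and gives the same conclusion once $\tau$ is small. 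With these two corrections (intersect with the shrunk parent; estimate the loss additively against $\mu(Q_0)$), your argument goes through and coincides with the paper's.
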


\begin{proof} Let $\calQ = \cup \{\calQ_{n} : z \in \Z\}$ be a family of \textbf{closed} (and hence compact) Christ cubes on $(X,d,\mu)$, see \cite[Theorem 11]{MR1096400}. Thus, the cubes here are closures of the cubes defined in \cite[Theorem 11]{MR1096400}. By changing the indexing of the families $\calQ_{n}$ slightly, one may assume that $2^{-n} \lesssim_{X} \diam_{X}(Q) < 2^{-n}$ for all $Q \in \calQ_{n}$. According to \cite[(3.6)]{MR1096400}, the cubes in $\calQ$ can be chosen so that they have \emph{small boundary regions} in the following sense: there are constants $C \geq 1$ and $\eta > 0$ such that $\mu(\partial_{\rho} Q) \leq C\rho^{\eta}\mu(Q)$ for all $Q \in \calQ$, where
\begin{equation}\label{boundaries} \partial_{\rho} Q := \{x \in Q : \dist(x,Q^{c}) < \rho 2^{-n}\}, \qquad Q \in \calQ_{n}. \end{equation}
Fix $x \in X$. To begin the construction of a fat Cantor set inside $B(x,1)$, fix also the parameters $\epsilon > 0$ and $n_{0} \in \N$, and let $Q_{0}$ be a cube in $\calQ_{0}$ containing $x$ (there may be several options, since the cubes in $\mathcal{Q}_{0}$ are closures of "dyadic" cubes, but any choice will do). Since $\diam_{X}(Q_{0}) < 2^{-n_{0}} \leq 1$, one has $Q_{0} \subset B(x,1)$. Set $\calD_{n_{0}} := \{Q_{0}\}$. Now, if one simply declared that $\calD_{n} := \{Q \in \calQ_{n} : Q \subset Q_{0}\}$, then one would already have the conditions \nref{i}-\nref{iii} listed at the beginning of Section \ref{s:construction}. Then, the Cantor set $K$ defined by
\begin{displaymath} K := \bigcap_{n \geq n_{0}} \bigcup_{Q \in \calD_{n}} Q \end{displaymath}
would satisfy $\mu(K) = \mu(Q_{0}) \sim_{n_{0}} \mu(B(x,1))$ by the doubling hypothesis.

To secure, in addition, the separation condition \nref{iv}, one need to remove some boundary regions, and apply \eqref{boundaries}. Namely, fix a constant $\tau = \tau(\epsilon) > 0$, to be determined a little later, and define
\begin{displaymath} Q_{0}' := Q_{0} \, \setminus \, \partial_{\tau 2^{-n_{0}\epsilon}} Q_{0} \quad \text{and} \quad \calD_{n_{0}}' := \{Q_{0}'\}. \end{displaymath}
Then, assume that $\calD_{n}'$ has already been defined for some $n \geq n_{0}$. Assume also that the sets in $\calD_{n}'$ are obtained as compact subsets of sets in $\calD_{n}$: for every $Q \in \calD_{n}$, there corresponds a compact set $Q' \in \calD_{n}'$ with $Q' \subset Q$ (but it may, and will, sometimes happen that $Q' = \emptyset$). To define $\calD_{n + 1}'$, fix $Q \in \calD_{n + 1}$, and let $\widehat{Q}' \in \calD_{n}'$ be the compact set contained in the $\calD_{n}$-parent $\widehat{Q} \supset Q$. Define
\begin{displaymath} Q' := [Q \, \setminus \, \partial_{\tau 2^{-n\epsilon}} Q] \cap \widehat{Q}'.   \end{displaymath}
Then evidently $Q' \subset \widehat{Q}'$, and the conditions \nref{i}-\nref{iii} from the beginning of Section \ref{s:construction} remain valid for the modified collections $\calD_{n}'$, $n \geq n_{0}$. But now also condition \nref{iv} is valid. Indeed, if $Q_{1}',Q_{2}' \in \calD_{n}$ are distinct, and there still existed a point $x \in Q_{1}' \subset Q_{1}$ with $\dist(x,Q_{2}') < \tau 2^{-(1 + \epsilon)n}$, then clearly $x \in \partial_{\tau 2^{-n\epsilon}} Q_{1}$, and hence in fact $x \notin Q_{1}'$.

So, the only remaining concern is the $\mu$-measure of the new Cantor set
\begin{displaymath} K' := \bigcap_{n \geq n_{0}} \bigcup_{Q' \in \calD_{n}'} Q'. \end{displaymath}
Evidently, if $x \in Q_{0} \, \setminus \, K'$, then $x \in \partial_{\tau 2^{-n\epsilon}} Q$ for some $Q \in \calD_{n}$ with $Q \subset Q_{0}$ (hence $n \geq n_{0}$). Recalling the estimate for the $\mu$-measure of boundary regions above \eqref{boundaries}, one infers that
\begin{align*} \mu(Q_{0} \, \setminus \, K') & \leq \sum_{n \geq n_{0}} \mathop{\sum_{Q \in \calD_{n}}}_{Q \subset Q_{0}} \mu(\partial_{\tau 2^{-\epsilon n}} Q)\\
& \leq C\sum_{n \geq n_{0}} (\tau 2^{-n\epsilon})^{\eta} \mathop{\sum_{Q \in \calD_{n}}}_{Q \subset Q_{0}} \mu(Q)\\
& = C\tau^{\eta} \sum_{n \geq 0} 2^{-n\epsilon \eta} \mu(Q_{0}) \lesssim_{\epsilon,\eta} C\tau^{\eta} \mu(Q_{0}). \end{align*}
Therefore, choosing $\tau = \tau(C,\epsilon,\eta) > 0$ sufficiently small, one has $\mu(Q_{0} \, \setminus \, K') \leq \mu(Q_{0})/2$, hence $\mu(K') \gtrsim \mu(Q_{0}) \gtrsim_{n_{0}} \mu(B(x,1))$. The proof is complete. \end{proof}

\section{Containing pieces of $C_{\He}^{1,\alpha}$-surfaces on intrinsic graphs}\label{extensionAppendix}

This appendix contains the proof of the following proposition
which was needed in the proof of Theorem \ref{mainQualitative} (or
Theorem \ref{surfaces}).

\begin{proposition}\label{appProp} Let $S \subset \He ^n$ be a $C^{1,\alpha}_{\He}$-surface, $0 < \alpha \leq 1$. Then, for every $p_{0} \in S$, there exists $r_{0} > 0$, a vertical subgroup $\W \subset \He ^n$ with complementary horizontal subgroup $\V$, and a compactly supported intrinsic $C^{1,\alpha/3}$-function $\varphi \colon \W \to \V$ such that $S \cap B(p_{0},r_{0})$ is contained on the intrinsic graph of $\varphi$. \end{proposition}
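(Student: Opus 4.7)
The plan is to combine a local parametrization via the implicit function theorem with a cutoff-based extension, trading H\"older exponent for compact support. The loss from $\alpha$ to $\alpha/3$ reflects the fact that intrinsic $C^{1,\alpha}$-regularity is not preserved under naive multiplication by a smooth cutoff, and one has to quantify the distortion between the Euclidean and Heisenberg pictures in the transition region.

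First, fix $p_0 \in S$. Since the horizontal normal $\nu_{\He}(p_0)$ is nonzero, some component $\nu_{\He}^i(p_0)$ is nonzero; after relabeling coordinates, assume $i=1$, and let $\W$ be the vertical subgroup perpendicular to $\partial_{x_1}$ with complementary horizontal subgroup $\V$. By the implicit function theorem \cite[Theorem 6.5]{FSSC} and Remark \ref{normalRemark}, there exist $r_1 > 0$ and a continuous function $\varphi_{\mathrm{loc}} \colon U \to \V$ on some open neighborhood $U \subset \W$ of $\pi_{\W}(p_0)$, such that $S \cap B(p_0,r_1) \subset \Phi_{\mathrm{loc}}(U)$; moreover, $\varphi_{\mathrm{loc}}$ is intrinsically differentiable on $U$ with continuous intrinsic gradient, and Remark \ref{normalRemark} (combined with the $\alpha$-H\"older regularity of $\nu_{\He}$) gives a local form of \eqref{CFOdef} with exponent $\alpha$ on compact subsets of $U$.

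Next, choose $r_0 \in (0, r_1)$ and open sets $U'' \Subset U' \Subset U$ with $S \cap B(p_0,r_0) \subset \Phi_{\mathrm{loc}}(U'')$, and fix a Euclidean cutoff $\chi \in C_c^{\infty}(U')$ with $\chi \equiv 1$ on a neighborhood of $\overline{U''}$. Define $\varphi := \chi \cdot \varphi_{\mathrm{loc}}$ on $U$ and extend $\varphi$ by zero to all of $\W$. Then $\varphi$ is compactly supported and agrees with $\varphi_{\mathrm{loc}}$ on a neighborhood of $\overline{U''}$, so $S \cap B(p_0,r_0) \subset \Phi(\W)$. It then remains to prove $\varphi \in C_{\He}^{1,\alpha/3}(\W)$ in the sense of Definition \ref{C1alphaGraphs}. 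In the interior region $\{\chi = 1\}$ the intrinsic gradient of $\varphi$ inherits $\alpha$-H\"older regularity directly from $\varphi_{\mathrm{loc}}$; outside $\mathrm{supp}(\chi)$ it vanishes identically, and the intrinsic graph there coincides with $\W$ itself, with constant horizontal normal; in the transition region $\{0 < \chi < 1\}$ one writes out the intrinsic gradient using the formulas for $D^{\varphi}_j$ together with the product rule, and estimates the resulting terms using the Euclidean $(1+\alpha)/2$-H\"older regularity of $\varphi_{\mathrm{loc}}$ in $t$ from \cite[Proposition 4.2]{CFO2} and the pointwise linear approximation from Proposition \ref{p:Approx}.

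The main obstacle is the quantitative verification that the intrinsic gradient of the cut-off extension is $\alpha/3$-H\"older continuous in $d_{\He}$ across the transition annulus $U' \setminus U''$. The intrinsic gradient is not a simple function of the Euclidean derivatives of $\varphi$; in particular, the nonlinear vector field $D_{n+1}^{\varphi} = \partial_{x_{n+1}} + \varphi \partial_t$ forces a careful interplay between $\varphi$ itself and its derivatives. The loss of the factor $3$ in the H\"older exponent arises from balancing three competing effects: the Euclidean H\"older regularity of the product $\chi \cdot \varphi_{\mathrm{loc}}$, the distortion between Euclidean and Heisenberg distances along vertical directions (where $d_{\He} \sim |t|^{1/2}$), and the vertical H\"older regularity of $\varphi_{\mathrm{loc}}$. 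Once these estimates are combined, the claim that $\varphi \in C_{\He}^{1,\alpha/3}(\W)$ follows from Proposition \ref{equivProp}.
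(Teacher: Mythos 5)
Your overall strategy --- parametrize locally via the implicit function theorem, then force compact support by multiplying the locally defined intrinsic function $\varphi_{\mathrm{loc}}$ by a Euclidean cutoff $\chi$ and extending by zero --- is genuinely different from the paper's, and it has a gap at its central step: the claim that $\chi\cdot\varphi_{\mathrm{loc}}$ is an intrinsic $C^{1,\alpha/3}$-function. The ``product rule'' you invoke in the transition region is not available. An intrinsic $C^{1,\alpha}$-function is in general only $(1+\alpha)/2$-H\"older along vertical lines, so the Euclidean partial $\partial_t\varphi_{\mathrm{loc}}$ does not exist pointwise, and the quantities $D^{\varphi}_j\varphi$ are only controlled as derivatives along integral curves (equivalently, in the weak formulation $D^{\varphi}_{n+1}\varphi = \partial_{x_{n+1}}\varphi + \tfrac{1}{2}\partial_t(\varphi^2)$, where only the \emph{combination} is a continuous function, not the two distributions separately). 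Writing $\psi := \chi\varphi_{\mathrm{loc}}$, one gets
\begin{displaymath}
\partial_{x_{n+1}}\psi + \tfrac{1}{2}\partial_t(\psi^{2})
= \chi\left[\partial_{x_{n+1}}\varphi_{\mathrm{loc}} + \tfrac{1}{2}\partial_t(\varphi_{\mathrm{loc}}^{2})\right]
+ \varphi_{\mathrm{loc}}\,\partial_{x_{n+1}}\chi + \tfrac{1}{2}\varphi_{\mathrm{loc}}^{2}\,\partial_t(\chi^{2})
+ \tfrac{1}{2}\chi(\chi-1)\,\partial_t(\varphi_{\mathrm{loc}}^{2}),
\end{displaymath}
and the last term, supported exactly on your transition annulus $\{0<\chi<1\}$, involves the distribution $\partial_t(\varphi_{\mathrm{loc}}^{2})$ alone, which is not a function in general. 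So $\chi\varphi_{\mathrm{loc}}$ need not even be intrinsically differentiable with continuous intrinsic gradient, let alone satisfy \eqref{CFOdef} with exponent $\alpha/3$; the mismatch is structural (the linear part of the intrinsic gradient scales like $\chi$ while the quadratic part scales like $\chi^{2}$) and is not fixed by the three-way balancing you describe. This is precisely the issue behind the remark in the introduction that no extension theorem for intrinsic $C^{1,\alpha}$-functions is available.

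The paper avoids the parametrization side entirely and instead extends the \emph{defining function}: it sets $f(x_1,\ldots,x_{2n},t):=x_1$ outside a large ball $B(p_0,Cr_0)$, checks the Whitney compatibility estimate for the pair $(f,\nabla_{\He}f)$ on the closed set $\bar{B}(p_0,r_0)\cup[\He^n\setminus B(p_0,Cr_0)]$, and runs the Whitney extension of \cite[Theorem 6.8]{FSSC} to produce a global $f_1\in C^{1,\alpha/3}_{\He}(\He^n)$ with $X_1f_1\geq\tfrac{1}{2}$ everywhere. The level set $\{f_1=0\}$ is then automatically a globally defined intrinsic $C^{1,\alpha/3}$-graph containing $S\cap B(p_0,r_0)$, and its parametrizing function is compactly supported because $f_1=x_1$ far away. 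The exponent loss from $\alpha$ to $\alpha/3$ there comes from the case analysis $d_{\He}(p,q)\lessgtr r(p)^3$ in the Whitney estimates, not from a cutoff of $\varphi$. If you want to salvage your route, you would need to perform the cutoff at the level of $f$ (or of the normal field), not of $\varphi$ --- which is essentially what the paper does.
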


\begin{proof} Fix $p_{0} \in S$, and let $r_{0} > 0$ and $B:=B(p_{0},r_{0})$ first be so small that $S \cap B$ can be written as
\begin{displaymath} S \cap \bar B = \{p \in  B : f(p) = 0\} \end{displaymath}
for some $f \in C^{1}_{\He}(B(p_{0},10r_{0}))$ satisfying
$\nabla_{\He}f(p_{0}) \neq 0$, and
\begin{equation}\label{appForm1} |\nabla_{\He}f(p_{1}) - \nabla_{\He} f(p_{2})| \leq Hd_{\mathbb{H}}(p_{1},p_{2})^{\alpha}, \qquad p_{1},p_{2} \in B(p_{0},10r_{0}). \end{equation}
It follows from \eqref{appForm1} that $f,\nabla_{\He} f \in
L^{\infty}(B(p_{0},10r_{0}))$. By making $r_{0}$ smaller, one may
further improve \eqref{appForm1} to
\begin{equation}\label{appForm1++} |\nabla_{\He} f(p_{1}) - \nabla_{\He} f(p_{2})| \leq \min\{H d_{\mathbb{H}}(p_{1},p_{2})^{\alpha},\epsilon\}, \qquad p_{1},p_{2} \in B(p_0,10r_{0}), \end{equation}
where $\epsilon > 0$ is a small absolute constant to be chosen
later. For notational convenience, we will also assume that
$\nabla_{\He}f(p_{0}) = (X_1f(p_{0}),\dots, X_{2n}f(p_{0})) =
(1,0,\dots, 0)$, but any other non-zero constant vector would work
equally well: it is only crucial to choose $\W$ (as in the
statement of the proposition) so that $\nabla_{\He}f(p_{0})$ is
the horizontal normal of $\W$. Under the present assumption, set
$\W := \{(0,y,t) : y\in \R^{2n-1},t \in \R\}$ and $\V :=
\{(x_1,0,0) : x_1 \in \R\}$.

Let $C > 20$ be another constant to be determined later, which may
depend on the \emph{data}
\begin{equation}\label{appData} \|f\|_{L^{\infty}(B(p_0,r_{0}))}, \, \|\nabla_{\He}f\|_{L^{\infty}( B)}, \, \|p_{0}\|, \, r_{0}, \, H \quad \text{and} \quad \epsilon. \end{equation}
Then, initially extend $f$ by setting $f(x_1,y,t) := x_1$ for
$(x_1,y,t) \in \He ^n \, \setminus \, B(p_0,Cr_{0})$. The main
task is now to extend $f$ to a function $f_{1} \in
C^{1,\alpha/3}_{\He}(\He ^n)$ in such a manner that $X_1f_{1} \geq
\tfrac{1}{2}$; then $\{f_{1} = 0\}$ will be an intrinsic
$C^{1,\alpha/3}$-graph containing $S \cap B$. The extension of $f$
to $f_{1}$ can be accomplished, up to a few additional details, by
using the standard proof of the Whitney extension theorem
\cite[Theorem 6.8]{FSSC} in $\He ^n$. An underlying observation is
that $\nabla_{\He} f \approx (1,0,\ldots, 0)$ on $B \cup [\He ^n
\, \setminus \, B(p_{0},Cr_{0})]$, and if $C$ is chosen large
enough, depending on the data in \eqref{appData}, the extension
$f_{1}$ can be arranged to have the same property.

Define
\begin{displaymath} k(p) := \nabla_{\He}f(p), \qquad p \in \bar{B}, \end{displaymath}
recalling that $f$ was initially defined on $B(p_{0},10r_{0})$.
Also define $k(p) := (1,0,\ldots,0) \equiv \nabla_{\He}f(p)$ for
$p \in \He ^n\, \setminus \, B(p_{0},Cr_{0})$, so both $k$ and $f$
are now defined on the closed set
\begin{displaymath} F := \bar{B} \cup [\He ^n\, \setminus \, B(p_{0},Cr_{0})]. \end{displaymath}
Recalling \eqref{appForm1++}, and that $\nabla_{\He} f(p) =
(1,0,\ldots, 0)$, we note that
\begin{equation}\label{appForm1+} |k(p_{1}) - k(p_{2})| \leq \min\{H d_{\mathbb{H}}(p_{1},p_{2})^{\alpha},\epsilon\}, \qquad p_{1},p_{2} \in F. \end{equation}
Towards applying the Whitney extension theorem \cite[Theorem
6.8]{FSSC}, consider the following quantity $R(q,p)$ appearing in
its statement:
\begin{displaymath} R(q,p) := \frac{f(q) - f(p) - \langle k(p), \pi(p^{-1} \cdot q) \rangle}{d_{\mathbb{H}}(p,q)}, \qquad p,q \in F. \end{displaymath}
Here $\pi$ is the projection $\pi(x_1,\ldots,x_{2n},t) =
(x_1,\ldots,x_{2n})$, and $\langle \cdot , \cdot \rangle$ stands
for the usual inner product in $\R^{2n}$. Recall that $\pi$ is a
Lipschitz map $\He ^n \to \R^{2n}$, and also a group homomorphism,
that is, $\pi(p \cdot q) = \pi(p) + \pi(q)$ for $p,q \in \He ^n$.
The following estimate for $|R(p,q)|$ will be needed, and next
verified:
\begin{equation}\label{appForm2} |R(q,p)| \lesssim \min \{H d_{\mathbb{H}}(p,q)^{\alpha},\epsilon\}, \qquad p,q \in F \cap B(p_{0},2Cr_{0}). \end{equation}
For $p,q \in \bar{B}$, the estimate follows immediately from
\eqref{appForm1++} and \cite[Lemma 4.2]{MR2223801}, which further
cites \cite[Theorem 2.3.3]{MontiThesis}. The case $p,q \in \He
^n\, \setminus \, B(p_{0},Cr_{0})$ is clear, as $R(p,q) = 0$
(recalling that $k(p) = (1,0,\ldots,0)$ and
$f(x_1,\ldots,x_{2n},t) = x_1$). Finally, consider points $q \in
B(p_{0},2Cr_{0}) \, \setminus \, B(p_{0},Cr_{0})$ and $p \in
\bar{B}$ (the case where the roles of $p$ and $q$ are reversed is
similar, and even slightly easier). Then $|k(p) - (1,0,\ldots,0)|
\leq \epsilon$, $f(q) = (1,0,\ldots,0) \cdot \pi(q)$, and
$d_{\mathbb{H}}(p,q) \gtrsim Cr_{0}$. Consequently,
\begin{align*} |R(q,p)| & = \left|\frac{f(q) - f(p) - \langle k(p), \pi(p^{-1} \cdot q) \rangle}{d_{\mathbb{H}}(p,q)} \right|\\
& \lesssim \frac{|\langle ((1,0,\ldots,0) - k(p)), \pi(q) \rangle|}{Cr_{0}} + \frac{|f(p) - \langle k(p), \pi(p) \rangle|}{Cr_{0}}\\
& \lesssim \epsilon \leq
\min\{d_{\mathbb{H}}(p,q)^{\alpha},\epsilon\},
\end{align*} noting that $|\pi(q)| \lesssim \|p_{0}\| + Cr_{0}$,
and choosing $C \geq 1$ eventually so large that $(\|p_{0}\| +
Cr_{0})/(Cr_{0}) \leq 2$ and $(Cr_{0})^{\alpha} \geq \epsilon$,
and
\begin{displaymath} |f(p) - \langle k(p), \pi(p) \rangle| \lesssim \|f\|_{L^{\infty}(\bar{B})} + \|\nabla_{\He}f\|_{L^{\infty}(\bar{B})}(\|p_{0}\| + r_{0}) \leq \epsilon C r_{0}. \end{displaymath}
This completes the proof of \eqref{appForm2}.

Next, we claim that $f$ can be extended to a function $f_{1} \in
C^{1,\alpha/3}_{\He}(\He ^n)$ with the additional property that
\begin{equation}\label{appForm3} |\nabla_{\He}f_{1}(p) - (1,0,\ldots,0)| \leq \tfrac{1}{2}, \qquad p \in \He ^n. \end{equation}
The proof follows the usual argument for the Whitney extension
theorem, see \cite[Theorem 6.8]{FSSC} or \cite[\S 6.5]{EG}, and
one just needs to check that the resulting extension is in
$C^{1,\alpha/3}_{\He}(\He ^n)$, and that \eqref{appForm3} is
satisfied. We start by setting up some notation. For any $p \in
\He ^n$, let
\begin{displaymath} r(p) := \dist_{\mathbb{H}}(p,F)/20. \end{displaymath}
Since $U := F^{c} = B(p_{0},Cr_{0}) \setminus \bar{B}$ is bounded
in our scenario, the numbers $r(p)$ above are uniformly bounded to
begin with (in \cite{FSSC} and \cite{EG}, one needs to take
instead $r(p) = \min\{1,\dist_{\mathbb{H}}(p,F)\}/20$ to fix
this). Thus, by the $5r$ covering theorem, there exists a
countable set $S \subset \He ^n\, \setminus \, F$ such that
\begin{displaymath} U = \bigcup_{s \in S} B(s,5r(s)), \end{displaymath}
and the balls $B(s,r(s))$, $s \in S$, are disjoint. One may then
proceed to define the (smooth) partition of unity $\{\nu_{s}\}_{s
\in S}$ of $U$, subordinate to the cover $\{B(s,10r(s))\}_{s \in
S}$, as in the proof of either \cite[\S 6.5]{EG} or \cite[Theorem
6.8]{FSSC}. The key properties are that
\begin{equation}\label{appForm8} \sum_{s \in S} \nu_{s} = \mathbf{1}_{U} \quad \text{and} \quad \sum_{s \in S} \nabla_{\He}\nu_{s}(p) \equiv 0, \end{equation}
and
\begin{equation}\label{appForm9} |\nabla_{\He}^{j}\nu_{s}(p)| \lesssim \frac{1}{r(p)^{j}}, \qquad p \in U, \, s \in S, \, j \in \{1,2\}. \end{equation}
Here $\nabla_{\He}^{2}$ simply refers to any second order
horizontal derivative. Moreover, the supports of the functions
$\nu_{s}$ have bounded overlap, that is, for $p \in U$ fixed,
there are only $\lesssim 1$ indices $s \in S$ with $v_{s}(p) \neq
0$ or $\nabla_{\He} v_{s}(p) \neq 0$. To be precise, in the proof
of \cite[Theorem 6.8]{FSSC}, condition \eqref{appForm9} is only
stated for first-order horizontal derivatives, that is, for $j=1$.
However, the bound for the second order derivatives easily follows
from formula  \cite[(57)]{FSSC}, observing that the functions
defined there are all obtained from a fixed smooth function by
rescaling with a factor proportional to $1/r(p)$ and using
properties of $q\mapsto d_{\He}(p,q)$.

Now, the extension $f_{1}$ is defined as follows:
\begin{displaymath} f_{1}(p) := \begin{cases} f(p), & \text{if } p \in F, \\ \sum_{s \in S} \nu_{s}(p)[f(\hat{s}) + \langle k(\hat{s}), \pi(\hat{s}^{-1} \cdot p) \rangle], & \text{if } p \in U. \end{cases} \end{displaymath}
Here, for $p \in U$ given, $\hat{p} \in F$ is any point satisfying
$\dist_{\mathbb{H}}(p,F) = d_{\mathbb{H}}(p,\hat{p})$. Since the
definition is precisely the same as the one in \cite[Theorem
6.8]{FSSC}, the function $f_{1}$ is readily a $C^{1}_{\He}(\He
^n)$-extension of $f$, and moreover
\begin{equation}\label{appForm4} \nabla_{\He}f_{1}(q) = k(q), \qquad q \in F. \end{equation}
To prove, further, that $f_{1} \in C^{1,\alpha/3}_{\He}(\He ^n)$,
and that \eqref{appForm3} holds, one needs to look closer at the
differences $|\nabla_{\He}f_{1}(p) - \nabla_{\He}f_{1}(q)|$. The
following estimates are copied from \cite[p. 250]{EG} (and
completely omitted in \cite{FSSC}, as there is virtually no
difference between $\He^{n}$ and $\R^{n}$ in this argument).
First, the horizontal gradient of $f_{1}$ on $U$ is evidently
\begin{equation}\label{appForm7} \nabla_{\He}f_{1}(p) = \sum_{s \in S} \{[f(\hat{s}) + \langle k(\hat{s}), \pi(\hat{s}^{-1} \cdot p) \rangle]\nabla_{\He}\nu_{s}(p) + \nu_{s}(p)k(\hat{s})\}, \qquad p \in U. \end{equation}
By \eqref{appForm4}, $\nabla_{\He}f_{1}$ and $\nabla_{\He}f$
coincide on $F$, hence satisfy the same estimates, and in
particular \eqref{appForm1+}. To understand the behaviour of
$\nabla_{\He}f_{1}$ outside $F$, consider first the case $p \in U$
and $q \in F$. First,
\begin{equation}\label{appForm6} |\nabla_{\He}f_{1}(p) - \nabla_{\He}f_{1}(q)|
\leq |\nabla_{\He}f_{1}(p) - k(\hat{p})| + |k(\hat{p}) - k(q)|.
\end{equation} Since $d_{\mathbb{H}}(\hat{p} , q) \leq d_{\mathbb{H}}(\hat{p} , p) + d_{\mathbb{H}}(p ,
q) \leq 2d_{\mathbb{H}}(p, q)$, the second term in
\eqref{appForm6} can be estimated by
\begin{equation}\label{appForm5} |k(\hat{p}) - k(q)| \stackrel{\eqref{appForm1+}}{\lesssim}
\min\{H d_{\mathbb{H}}(p,q)^{\alpha},\epsilon\}. \end{equation}
The first term in \eqref{appForm6} is estimated as follows,
recalling \eqref{appForm8} and \eqref{appForm7}:
\begin{align} |\nabla_{\He}f_{1}(p) - k(\hat{p})| & \stackrel{\eqref{appForm8} \& \eqref{appForm7}}{=} \left|\sum_{s \in S} [f(\hat{s}) + \langle k(\hat{s}), \pi(\hat{s}^{-1} \cdot p)\rangle]\nabla_{\He}\nu_{s}(p) + \nu_{s}(p)[k(\hat{s}) - k(\hat{p})] \right| \notag\\
& \stackrel{\eqref{appForm8}}{\leq} \left| \sum_{s \in S} [f(\hat{s}) - f(\hat{p}) + \langle k(\hat{s}), \pi(\hat{s}^{-1} \cdot \hat{p}) \rangle]
\nabla_{\He}\nu_{s}(p) \right| \notag\\
& \qquad + \left| \sum_{s \in S} [\langle (k(\hat{s}) - k(\hat{p})), \pi(\hat{p}^{-1} \cdot p) \rangle]\nabla_{\He}\nu_{s}(p) \right| \notag\\
&\label{appForm11} \qquad + \left| \sum_{s \in S}
\nu_{s}(p)[k(\hat{s}) - k(\hat{p})] \right| =: \Sigma_{1} +
\Sigma_{2} + \Sigma_{3}. \end{align} To arrive at the expression
for $\Sigma_2$, we added here the term
\begin{displaymath}
\sum_{s \in S} \langle k(\hat{p})), \pi(\hat{p}^{-1} \cdot p)
\rangle \nabla_{\He}\nu_{s}(p) = \langle k(\hat{p})),
\pi(\hat{p}^{-1} \cdot p) \rangle \sum_{s \in S}
\nabla_{\He}\nu_{s}(p) \overset{\eqref{appForm8}}{=} 0.
\end{displaymath}
With the expressions for $\Sigma_1,\Sigma_2$, and $\Sigma_3$ in
hand, we can now continue to bound the right hand side of
\eqref{appForm6}. First, the term $\Sigma_{1}$ essentially
contains $R(\hat{p},\hat{s})$, and can be bounded using
\eqref{appForm2} and \eqref{appForm9}, and the bounded overlap of
the supports of the functions $\nu_{s}$ (in applying
\eqref{appForm2}, note that easily $\hat{p},\hat{s} \in F \cap
B(p_{0},2Cr_{0})$, as $\hat{p},\hat{s}$ are among the points in
$F$ closest to $p,s \in B(p_{0},Cr_{0})$):
\begin{displaymath} \Sigma_{1} \lesssim \frac{d_{\mathbb{H}}(\hat{s},\hat{p})}{r(p)} \cdot \min\{H d_{\mathbb{H}}(\hat{s},\hat{p})^{\alpha},\epsilon\}.   \end{displaymath}
Repeating verbatim the estimate on \cite[p. 251]{EG}, we moreover
find that $d_{\mathbb{H}}(\hat{s},\hat{p}) \lesssim
d_{\mathbb{H}}(p,\hat{p}) = \dist_{\mathbb{H}}(p,F) = r(p)$ for
all $s \in S$ relevant in the summation above, that is, for those
$s \in S$ where $\nu_{s}(p) \neq 0$ or $\nabla_{\He} \nu_{s}(p)
\neq 0$. Consequently,
\begin{displaymath} \Sigma_{1} \lesssim \min\{H d_{\mathbb{H}}(p,\hat{p})^{\alpha},\epsilon\} \leq \min\{H d_{\mathbb{H}}(p,q)^{\alpha},\epsilon\}. \end{displaymath}
Next, to estimate $\Sigma_{2}$, one uses the same ingredients as
above, except that the appeal to \eqref{appForm2} is replaced by
\eqref{appForm1+}:
\begin{displaymath} \Sigma_{2} \lesssim \frac{d_{\mathbb{H}}(\hat{p},p)}{r(p)} \cdot \min\{H d_{\mathbb{H}}(\hat{s},\hat{p})^{\alpha},\epsilon\}
 \lesssim \min\{H d_{\mathbb{H}}(p,q)^{\alpha},\epsilon\}. \end{displaymath}
Virtually the same argument gives the same upper bound for
$\Sigma_{3}$. Starting from \eqref{appForm6}, and recalling
\eqref{appForm5}, one finally infers that
\begin{equation}\label{appForm10} |\nabla_{\He}f_{1}(p) - \nabla_{\He}f_{1}(q)| \lesssim
\min\{H d_{\mathbb{H}}(p,q)^{\alpha},\epsilon\}, \qquad p \in U, \, q \in F. \end{equation}
By symmetry, \eqref{appForm10} also holds if $p \in F$ and $q \in
U$. Combining this with \eqref{appForm1+}, one concludes that
\eqref{appForm10} holds for all pairs $p,q \in \He ^n$ with (a)
both $p,q \in F$, or (b) one point in $F$ and the other one in
$U$. How about the the case (c) $p,q \in U$? The estimate
\begin{equation}\label{appForm13} |\nabla_{\He}f_{1}(p) - \nabla_{\He}f_{1}(q)| \lesssim \epsilon \end{equation}
follows by recalling that $|k(\hat{p}) - (1,0,\ldots,0)| \leq
\epsilon$ and $|k(\hat{q}) - (1,0,\ldots,0)| \leq \epsilon$ by
\eqref{appForm1+}, then repeating the estimate from
\eqref{appForm11} and using the triangle inequality. So, it
remains to show that $|\nabla_{\He}f_{1}(p) -
\nabla_{\He}f_{1}(q)| \lesssim d_{\mathbb{H}}(p,q)^{\alpha/3}$.
The implicit constants here may depend on all the data in
\eqref{appData}. One may assume that $d_{\mathbb{H}}(p,q) \leq 1$,
since otherwise this is implied by \eqref{appForm13}. Consider
first the case where
\begin{equation}\label{appForm12} d_{\mathbb{H}}(p,q) \leq r(p)^{3}. \end{equation}
Recall, once again, the formulae for
$\nabla_{\He}f_{1}(p),\nabla_{\He}f_{1}(q)$ from \eqref{appForm7};
the plan is to make crude term-by-term estimates. Note that if $s
\in S$ is fixed, then
\begin{displaymath} |\nu_{s}(p)k(\hat{s}) - \nu_{s}(q)k(\hat{s})| \lesssim \|\nabla_{\He}\nu_{s}\|_{L^{\infty}}d_{\mathbb{H}}(p,q)
 \lesssim \frac{d_{\mathbb{H}}(p,q)}{r(p)} \leq d_{\mathbb{H}}(p,q)^{2/3}, \end{displaymath}
using \eqref{appForm9} for $j = 1$, and the assumption
\eqref{appForm12}. Similarly, using \eqref{appForm9} for $j = 2$,
\begin{align*} |f(\hat{s})\nabla_{\He}\nu_{s}(p) - f(\hat{s})\nabla_{\He}\nu_{s}(q)| & \lesssim
\|f\|_{L^{\infty}(B(p_{0},2Cr_{0}))}\|\nabla_{\He}^{2}\nu_{s}\|_{L^{\infty}}d(p,q)\\
& \lesssim
\frac{\|f\|_{L^{\infty}(B(p_{0},2Cr_{0}))}}{r(p)^{2}}d_{\mathbb{H}}(p,q)
\lesssim
\|f\|_{L^{\infty}(B(p_{0},2Cr_{0}))}d_{\mathbb{H}}(p,q)^{1/3}.
\end{align*}
Finally, to deal with the last term
\begin{equation}\label{eq:triangle}
\triangle:=|\langle k(\hat{s}), \pi(\hat{s}^{-1} \cdot p)
 \rangle\nabla_{\He}\nu_{s}(p) - \langle k(\hat{s}), \pi(\hat{s}^{-1} \cdot q) \rangle\nabla_{\He}\nu_{s}(q)|
\end{equation}
that arises from $|\nabla_{\He}f_1(p)-\nabla_{\He}f_1(q)|$, we
assume without loss of generality that $\nabla_{\He}\nu_s(p)\neq
0$; if $\nabla_{\He}\nu_s(p)= \nabla_{\He}\nu_s(q)=0$, the
estimate is trivial. As explained between \eqref{appForm11}  and
\eqref{appForm10}, the assumption $\nabla_{\He}\nu_s(p)\neq 0$
ensures that
$
d_{\He}(\hat s,\hat p)\lesssim r(p).
$
Hence we have
\begin{equation}\label{eq:k_est}
|\langle k(\hat s),\pi(\hat s^{-1}\cdot p)\rangle|\lesssim
d_{\He}(\hat s,p)\lesssim d_{\He}(\hat s,\hat p)+ d_{\He}(\hat
p,p)\lesssim r(p).
\end{equation}
This allows us to bound the term $\triangle$ in
\eqref{eq:triangle} as follows
\begin{align*} \triangle
 &\lesssim  |\langle k(\hat s),\pi(\hat s^{-1}\cdot p)\rangle| |\nabla_{\He}\nu_s(p)-\nabla_{\He}\nu_s(q)|
 + |\langle k(\hat s),\pi(\hat s^{-1}\cdot p)-\pi(\hat s^{-1}\cdot q)|
 |\nabla_{\He}\nu_s(q)|\\
 &\overset{\eqref{eq:k_est},\eqref{appForm9}}{\lesssim} r(p)
 \frac{1}{r(p)^2} d_{\He}(p,q)+ d_{\He}(p,q)\frac
 {1}{r(p)} \stackrel{\eqref{appForm12}}{\lesssim} d_{\He}(p,q)^{2/3},
\end{align*}
recalling also that the implicit constants in ``$\lesssim$'' are
allowed to depend on the data in \eqref{appData}.
%
These bounds combined with the bounded overlap of the supports of
the functions $\nu_{s}$ show that
\begin{displaymath} |\nabla_{\He}f_{1}(p) - \nabla_{\He}f_{1}(q)| \overset{\eqref{appForm7}}{\lesssim} d_{\mathbb{H}}(p,q)^{1/3} \end{displaymath}
under the assumption \eqref{appForm12}. Finally, assume that
\begin{equation}\label{appForm14} d_{\mathbb{H}}(p,q) \geq r(p)^{3}. \end{equation}
In this remaining case, one may apply \eqref{appForm10} as
follows:
\begin{align*} |\nabla_{\He}f_{1}(p) - \nabla_{\He}f_{1}(q)| & \leq |\nabla_{\He}f_{1}(p) - k(\hat{p})| + |\nabla_{\He}f_{1}(q) - k(\hat{p})|\\
& \lesssim d_{\mathbb{H}}(p,\hat{p})^{\alpha} +
d_{\mathbb{H}}(q,\hat{p})^{\alpha} \lesssim r(p)^{\alpha} +
d_{\mathbb{H}}(p,q)^{\alpha} \lesssim
d_{\mathbb{H}}(p,q)^{\alpha/3},
\end{align*} using the assumption \eqref{appForm14} in the final
estimate. Recalling also the cases (a)-(b) discussed after
\eqref{appForm10}, it has now been established that $f_{1} \in
C^{1,\alpha/3}_{\He}(\He ^n)$, and \eqref{appForm13} holds for all
$p,q \in \He ^n$. Consequently, \eqref{appForm3} holds if
$\epsilon > 0$ was chosen small enough to  begin with, and then
\begin{equation}\label{appForm3+} X_1f_{1}(p) \geq \tfrac{1}{2}, \qquad p \in \He ^n. \end{equation}
It follows from \eqref{appForm3+} that for every $p \in \He ^n$,
the map $s \mapsto f_{1}(p \cdot (s,0,\ldots,0))$ is strictly
increasing with derivative $\partial_{s}[s \mapsto f_{1}(p \cdot
(s,0,\ldots,0))] = X_1f_{1}(p \cdot (s,0,\ldots,0)) \geq
\tfrac{1}{2}$. Consequently, for every $p \in \He ^n$, the line $p
\cdot \V = \{p \cdot (s,0,\ldots,0) : s \in \R\}$ intersects
$\{f_{1} = 0\}$ in exactly one point, so the set $\{f_{1} = 0\}$
is the intrinsic graph of a certain function $\varphi \colon \W
\to \V$. Recalling that $f_{1} \in C^{1,\alpha/3}_{\He}(\He ^n)$,
and noting \eqref{appForm3+}, the conclusion is that $\{f_{1} =
0\}$ is an intrinsic $C^{1,\alpha/3}$-graph. Moreover, since
$f_{1}(p) = f(p)$ for all $p \in \bar{B}$, the set $S \cap \bar{B}
\subset \{f = 0\} \cap \bar{B}$ is contained on the graph.
Finally, the function $\varphi$ is compactly supported, because
$f_{1}(x_1,\ldots,x_{2n},t) = x_1$ for all $(x_1,\ldots,x_{2n},t)
\in \He ^n \, \setminus \, B(p_{0},Cr_{0})$. Consequently,
\begin{displaymath} \{f_{1} = 0\} \cap [\He ^n\, \setminus \, B(p_{0},Cr_{0})] \subset \{(x_1,\ldots,x_{2n},t) : x_1 = 0\} = \W. \end{displaymath}
This implies that $\varphi \equiv 0$ outside a sufficiently large
ball centred at the origin. The proof of the proposition is
complete.  \end{proof}

\bibliographystyle{plain}
\bibliography{references}

\end{document}